\newtheorem{thm}{Theorem}[section]
\newtheorem*{theorem*}{Theorem}
\newtheorem*{acknowledgement*}{Acknowledgement}
\newtheorem{cor}[thm]{Corollary}
\newtheorem{lem}[thm]{Lemma}
\newtheorem{prop}[thm]{Proposition}
\theoremstyle{definition}
\newtheorem{defn}[thm]{Definition}
\theoremstyle{remark}
\newtheorem{rem}[thm]{Remark}
\numberwithin{equation}{section}
\newcommand{\set}[1]{\left\{#1\right\}}
\newcommand{\Real}{\mathbb R}
\newcommand{\xX}[0]{\mathbf{x}}
\newcommand{\nN}[0]{\mathbf{n}}
\newcommand{\OO}{\mathbf{0}}
\newcommand{\rstar}[1]{(\star_{#1})}
\title[Smooth compactness for self-expanders]{Smooth compactness for spaces of asymptotically conical self-expanders of mean curvature flow}
\author{Jacob Bernstein}
\address{Department of Mathematics, Johns Hopkins University, 3400 N. Charles Street, Baltimore, MD 21218}
\email{bernstein@math.jhu.edu}
\author{Lu Wang}
\address{Department of Mathematics, University of Wisconsin-Madison, 480 Lincoln Drive, Madison, WI 53706}
\email{luwang@math.wisc.edu}
\thanks{The first author was partially supported by the NSF Grant DMS-1609340. The second author was partially supported by the NSF Grants DMS-1406240 and DMS-1834824, an Alfred P. Sloan Research Fellowship, the office of the Vice Chancellor for Research and Graduate Education at the University of Wisconsin-Madison with funding from the Wisconsin Alumni Research Foundation and a Vilas Early Investigator Award.}
\begin{document}

\begin{abstract}
We show compactness in the locally smooth topology for certain natural families of asymptotically conical self-expanding solutions of mean curvature flow. Specifically, we show such compactness for the set of all two-dimensional self-expanders of a fixed topological type and, in all dimensions, for the set of self-expanders of low entropy and for the set of mean convex self-expanders with strictly mean convex asymptotic cones. From this we deduce that the natural projection map from the space of parameterizations of asymptotically conical self-expanders to the space of parameterizations of the asymptotic cones is proper for these classes.   
\end{abstract}

\maketitle

\section{Introduction} \label{IntroSec}
A \emph{hypersurface}, i.e., a properly embedded codimension-one submanifold, $\Sigma\subset\mathbb{R}^{n+1}$, is a \emph{self-expander} if
\begin{equation} \label{ExpanderEqn}
\mathbf{H}_\Sigma-\frac{\mathbf{x}^\perp}{2}=\mathbf{0}.
\end{equation}
Here 
$$
\mathbf{H}_\Sigma=\Delta_\Sigma\mathbf{x}=-H_\Sigma\mathbf{n}_\Sigma=-\mathrm{div}_\Sigma(\mathbf{n}_\Sigma)\mathbf{n}_\Sigma
$$
is the mean curvature vector, $\mathbf{n}_\Sigma$ is the unit normal, and $\mathbf{x}^\perp$ is the normal component of the position vector. Self-expanders arise naturally in the study of mean curvature flow. Indeed, $\Sigma$ is a self-expander if and only if the family of homothetic hypersurfaces
$$
\left\{\Sigma_t\right\}_{t>0}=\left\{\sqrt{t}\, \Sigma\right\}_{t>0}
$$
is a \emph{mean curvature flow} (MCF), that is, a solution to the flow
$$
\left(\frac{\partial \mathbf{x}}{\partial t}\right)^\perp=\mathbf{H}_{\Sigma_t}.
$$
Self-expanders are expected to model the behavior of a MCF as it emerges from a conical singularity \cite{AIC}. They are also expected to model the long time behavior of the flow \cite{EHAnn}.

Throughout the paper $n,k\geq 2$ are integers and $\alpha\in (0,1)$. Let $\Gamma$ be a $C^{k,\alpha}_{*}$-asymptotically conical $C^{k,\alpha}$-hypersurface in $\mathbb{R}^{n+1}$ and let $\mathcal{L}(\Gamma)$ be the link of the asymptotic cone of $\Gamma$.  For instance, if $\lim_{\rho\to 0^+} \rho \Gamma=\mathcal{C}$ in $C^{k, \alpha}_{loc} (\mathbb{R}^{n+1}\setminus\{\mathbf{0}\})$, where $\mathcal{C}$ is a cone, then $\Gamma$ is $C^{k,\alpha}_{*}$-asymptotically conical with asymptotic cone $\mathcal{C}$. For technical reasons, the actual definition is slightly weaker -- see Section 3 of \cite{BernsteinWangBanach} for the details. We denote the space of $C^{k,\alpha}_{*}$-asymptotically conical $C^{k,\alpha}$-hypersurfaces in $\mathbb{R}^{n+1}$ by $\mathcal{ACH}^{k,\alpha}_n$.

We now introduce the classes we will consider. First, for $g\geq 0$ and $e\geq 1$, let
$$
\mathcal{E}^{k,\alpha}_{\mathrm{top}}({g,e})=\set{\Gamma\in \mathcal{ACH}^{k,\alpha}_2\colon \Gamma \mbox{ satisfies \eqref{ExpanderEqn}  and $\Gamma$ is of genus $g$ with $e$ ends}},
$$
be the space of $C^{k,\alpha}_{*}$-asymptotically conical self-expanders in $\Real^3$ with genus $g$ and $e$ ends. Similarly, for any $h_0>0$, let
$$
\mathcal{E}^{k,\alpha}_{n,\mathrm{mc}}(h_0)=\set{\Gamma\in \mathcal{ACH}^{k,\alpha}_n\colon \Gamma \mbox{ satisfies \eqref{ExpanderEqn}, $H_{\Gamma}>0$,  $H_{\mathcal{L}(\Gamma)}\geq h_0$}},
$$
be the space of $C^{k,\alpha}_{*}$-asymptotically conical self-expanders in $\Real^{n+1}$ which are strictly mean convex and have uniformly strictly mean convex asymptotic cones. Finally, for $1<\Lambda_0<2$, let
$$
\mathcal{E}^{k,\alpha}_{n,\mathrm{ent}}({\Lambda_0})=\set{\Gamma\in \mathcal{ACH}^{k,\alpha}_n\colon \Gamma \mbox{ satisfies \eqref{ExpanderEqn}  and $\lambda[\Gamma]\leq \Lambda_0$}},
$$
be the space of $C^{k,\alpha}_{*}$-asymptotically conical self-expanders in $\Real^{n+1}$ which have entropy less than or equal to $\Lambda_0$. See Section \ref{EntropySec} for the definition of entropy.

We prove the following smooth compactness result for the spaces $\mathcal{E}^{k,\alpha}_{\mathrm{top}}({g,e})$, $\mathcal{E}^{k,\alpha}_{n,\mathrm{mc}}({h_0})$ and, under suitable hypotheses on $\Lambda_0$, $\mathcal{E}^{k,\alpha}_{n,\mathrm{ent}}({\Lambda_0})$.

\begin{thm}\label{SmoothCpctThm}
The following holds:
\begin{enumerate}
\item \label{2DCpctItem} If $\Sigma_i\in \mathcal{E}^{k,\alpha}_{\mathrm{top}}({g,e})$ and $\mathcal{L}(\Sigma_i)\to \sigma$ in $C^{k,\alpha}(\mathbb{S}^2)$, then there is a $\Sigma\in  \mathcal{E}^{k,\alpha}(g,e)$ with $\mathcal{L}(\Sigma)=\sigma$ so that, up to passing to a subsequence, $\Sigma_i\to \Sigma$ in $C^\infty_{loc}(\Real^3)$.
\item \label{MCCpctItem} If $\Sigma_i \in\mathcal{E}^{k,\alpha}_{n,\mathrm{mc}}({h_0})$ and $\mathcal{L}(\Sigma_i)\to \sigma$ in $C^{k,\alpha}(\mathbb{S}^n)$, then there is a $\Sigma\in  \mathcal{E}^{k,\alpha}_{n,\mathrm{mc}}({h_0})$ with $\mathcal{L}(\Sigma)=\sigma$ so that, up to passing to a subsequence, $\Sigma_i\to \Sigma$ in $C^\infty_{loc}(\Real^{n+1})$.
\item \label{EntropyCpctItem} If Assumption \eqref{Assump1} of Section \ref{EntropySec} holds, $\Sigma_i \in\mathcal{E}^{k,\alpha}_{n,\mathrm{ent}}({\Lambda_0})$ for $\Lambda_0< \Lambda<2$ and $\mathcal{L}(\Sigma_i)\to \sigma$ in $C^{k,\alpha}(\mathbb{S}^n)$, then there is a $\Sigma\in  \mathcal{E}^{k,\alpha}_{n,\mathrm{ent}}({\Lambda_0})$ with $\mathcal{L}(\Sigma)=\sigma$ so that, up to passing to a subsequence, $\Sigma_i\to \Sigma$ in $C^\infty_{loc}(\Real^{n+1})$. 
\end{enumerate}
\end{thm}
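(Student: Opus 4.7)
The plan is to reduce all three statements to a common three-step framework: (i) promote the convergence $\mathcal{L}(\Sigma_i)\to\sigma$ to uniform asymptotic control of the ends of the $\Sigma_i$; (ii) extract a smooth subsequential limit on compact subsets via class-specific local regularity inputs; (iii) verify that the limit lies in the prescribed class.

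For step (i), I would write each $\Sigma_i$ as a normal graph over $\mathcal{C}(\mathcal{L}(\Sigma_i))$ on the end. Since \eqref{ExpanderEqn} is uniformly elliptic on this conical gauge (the drift term $\mathbf{x}^\perp/2$ provides good decay when the graph is $C^{k,\alpha}_*$-asymptotically conical), Schauder estimates and an iteration argument should yield uniform $C^{k,\alpha}_*$-control of $\Sigma_i$ on $\{|\mathbf{x}|\geq R\}$ for some $R$ independent of $i$, with the graphs converging to a graph over $\mathcal{C}(\sigma)$. Two immediate consequences are that any $C^\infty_{loc}$-limit $\Sigma$ automatically satisfies $\mathcal{L}(\Sigma)=\sigma$ and lies in $\mathcal{ACH}^{k,\alpha}_n$, and that there is a uniform area bound on each ball $B_R\subset\mathbb{R}^{n+1}$.

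Given these uniform area bounds, step (ii) requires interior curvature estimates, and here the three cases diverge. In case \eqref{EntropyCpctItem}, the entropy bound $\lambda[\Sigma_i]\leq\Lambda_0<2$ gives uniform Gaussian density ratios for the self-similar MCF $\{\sqrt{t}\,\Sigma_i\}_{t>0}$; Brakke--White $\epsilon$-regularity, combined with Assumption \eqref{Assump1} to exclude the singular self-expanders that could otherwise appear as blow-up limits, rules out curvature concentration and yields smooth subsequential convergence. In case \eqref{MCCpctItem}, $H_{\Sigma_i}>0$ and $H_{\mathcal{L}(\Sigma_i)}\geq h_0$ allow one to apply convexity/curvature estimates for mean convex ancient MCFs to the associated $\{\sqrt{t}\,\Sigma_i\}$, or to apply gradient estimates for the expander equation directly under mean convexity. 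In case \eqref{2DCpctItem}, bounded genus and bounded number of ends together with the uniform local area bound give pointwise curvature bounds through a Choi--Schoen/Colding--Minicozzi style bubbling argument adapted to the drift equation, using that any blow-up limit is a smooth complete minimal surface in $\mathbb{R}^3$ of bounded genus and bounded area ratios, hence controlled.

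Step (iii) is closedness of the three classes under smooth convergence. Entropy is lower semi-continuous, so $\lambda[\Sigma]\leq\Lambda_0$ passes to the limit. Weakly mean convex is preserved; the strong maximum principle applied to the Jacobi-type equation satisfied by $H_\Sigma$ on a self-expander upgrades $H_\Sigma\geq 0$ to $H_\Sigma>0$, with the degenerate case $H_\Sigma\equiv 0$ excluded by step (i), since $H_{\mathcal{L}(\Sigma)}\geq h_0>0$ forces $H_\Sigma>0$ on the end. The step I expect to be the main obstacle is topological stability in \eqref{2DCpctItem}: the end estimates from (i) fix the number of ends at $e$, but preserving genus $g$ requires ruling out loss of topology both through vanishing necks inside $B_R$ and through topology escaping to infinity. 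The former is prevented by the smooth convergence on compacta, and the latter by the uniform $C^{k,\alpha}_*$-control from step (i); combining these with a standard no-collapsing argument for surfaces in $\mathbb{R}^3$ should give $\Sigma\in\mathcal{E}^{k,\alpha}_{\mathrm{top}}(g,e)$.
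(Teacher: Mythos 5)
Your three-step architecture (uniform control of the ends, class-specific interior compactness, closedness of the class) matches the paper's, and your treatments of the entropy case and of the closedness step are essentially the arguments given there. However, there is a genuine gap at the step you rely on most heavily, namely step (i). You propose to get uniform $C^{k,\alpha}_*$-control of the $\Sigma_i$ on $\{|\mathbf{x}|\geq R\}$, with $R$ independent of $i$, by writing each $\Sigma_i$ as a graph over its asymptotic cone and running elliptic Schauder estimates in the conical gauge. The problem is that this is circular: to set up Schauder estimates on an $i$-independent exterior region you must already know that each $\Sigma_i$ is a graph over $\mathcal{C}(\Sigma_i)$ with uniformly small gradient on that region, and the hypothesis $\Sigma_i\in\mathcal{ACH}^{k,\alpha}_n$ only guarantees graphicality outside some compact set \emph{depending on $i$}, with no uniform rate. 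The paper's Proposition \ref{AsympRegProp} supplies exactly this missing uniformity, and it does so by a parabolic, not elliptic, mechanism: one views $\bigcup_{t>0}(\sqrt{t}\,\Sigma_i)\times\{t\}$ together with the cone at $t=0$ as a Brakke flow and invokes the pseudo-locality theorem of Ilmanen--Neves--Schulze, which converts local graphicality of the \emph{cone} (uniform over $i$ once $\mathcal{L}(\Sigma_i)\to\sigma$ in $C^2$) into graphicality of $\sqrt{t}\,\Sigma_i$ for a definite time, and hence of $\Sigma_i$ outside a definite radius; interior parabolic Schauder theory then upgrades the regularity and yields the $O(|\mathbf{x}|^{-1})$ decay. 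Some such input (pseudo-locality, or a barrier/avoidance argument of equivalent strength) is indispensable, and ``Schauder plus iteration'' does not substitute for it.

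Two smaller points. In case \eqref{MCCpctItem} the paper's interior curvature bound is not a parabolic estimate for mean convex flows but an elliptic maximum-principle argument applied to $|A_\Sigma|^2/H_\Sigma^2$ (Lemma \ref{CurvBoundLem}, following Colding--Minicozzi), which is where the hypothesis $H_{\mathcal{L}(\Sigma_i)}\geq h_0$ enters quantitatively; your proposal leaves this estimate unspecified, and you also do not address the possibility of convergence with multiplicity in the compact region, which the paper rules out by noting that every component of the interior limit must connect to the end (there are no closed self-expanders), where the convergence is already multiplicity one. In case \eqref{2DCpctItem} the paper does not run a Choi--Schoen bubbling argument but cites White's compactness theorem \cite{WhiteInventiones}, whose hypotheses (uniform area bound from Lemma \ref{AreaRatioLem}, genus bound, and a bound on $\int_{\partial B_r\cap\Sigma_i}\kappa$ coming from the uniformly annular ends) are exactly what step (i) provides; your alternative is plausible but would need the blow-up analysis carried out, including why multiplicity-one convergence holds, before it could be accepted.
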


In \cite{BernsteinWangBanach}, the authors showed that the space $\mathcal{ACE}_n^{k,\alpha}(\Gamma)$ -- see \eqref{ACEDef} below -- of asymptotically conical parameterizations of self-expanders modeled on $\Gamma$ (modulo reparameterizations fixing the parameterization of the asymptotic cone) possesses a natural Banach manifold structure modeled on $C^{k, \alpha}(\mathcal{L}(\Gamma); \mathbb{R}^{n+1})$. They further showed that the map
$$
\Pi\colon \mathcal{ACE}_n^{k,\alpha}(\Gamma)\to C^{k, \alpha}(\mathcal{L}(\Gamma); \mathbb{R}^{n+1})
$$
given by $\Pi([\mathbf{f}])=\mathrm{tr}^1_{\infty}[\mathbf{f}]$ is smooth and Fredholm of index $0$. As such, by work of Smale \cite{Smale}, as long as $\Pi$ is proper it possesses a well-defined mod 2 degree. In fact as shown in \cite{BernsteinWangIntDegree}, when the map $\Pi$ is proper it possesses an integer degree. These results are all analogs of work of White \cite{WhiteEI} who proved such results for a large class of variational problems for parameterizations from compact manifolds -- see also \cite{WhiteVM}.

In general, the map $\Pi\colon \mathcal{ACE}^{k,\alpha}_n(\Gamma)\to C^{k,\alpha}(\mathcal{L}(\Gamma); \mathbb{R}^{n+1})$ is not proper. However, using Theorem \ref{SmoothCpctThm}, we give several natural subsets of $\mathcal{ACE}^{k,\alpha}_n(\Gamma)$ on which the restriction of $\Pi$ is proper. This should be compared to \cite{WhiteMD}. As a first step, it is necessary to shrink the range of $\Pi$. To that end, for any $\Gamma\in\mathcal{ACH}_n^{k,\alpha}$, let
$$
\mathcal{V}_{\mathrm{emb}}^{k,\alpha}(\Gamma)= \set{\varphi\in C^{k,\alpha}(\mathcal{L}(\Gamma);\mathbb{R}^{n+1}) \colon \mbox{$\mathscr{E}^{\mathrm{H}}_1[\varphi]$ is an embedding}},
$$
be the space of parameterizations of embedded cones. Here $\mathscr{E}^{\mathrm{H}}_1[\varphi]$ is the homogeneous degree-one extension of $\varphi$. This is readily seen to be an open subset of $C^{k,\alpha}(\mathcal{L}(\Gamma);\mathbb{R}^{n+1})$. It also follows from the definition of $\mathcal{ACE}^{k,\alpha}_n(\Gamma)$ that $\Pi\colon \mathcal{ACE}^{k,\alpha}_n(\Gamma)\to \mathcal{V}_{\mathrm{emb}}^{k,\alpha}(\Gamma)$.

\begin{thm} \label{2DProperThm}
For any $\Gamma\in\mathcal{ACH}_2^{k,\alpha}$, $\Pi\colon \mathcal{ACE}^{k,\alpha}_2(\Gamma)\to\mathcal{V}_{\mathrm{emb}}^{k,\alpha}(\Gamma)$ is proper.
\end{thm}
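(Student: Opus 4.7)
The plan is to deduce properness from Theorem~\ref{SmoothCpctThm}(\ref{2DCpctItem}) by upgrading the resulting convergence of hypersurfaces to convergence of parameterizations in $\mathcal{ACE}^{k,\alpha}_2(\Gamma)$. So let $[\mathbf{f}_i] \in \mathcal{ACE}^{k,\alpha}_2(\Gamma)$ satisfy $\Pi([\mathbf{f}_i]) = \varphi_i \to \varphi_\infty$ in $\mathcal{V}^{k,\alpha}_{\mathrm{emb}}(\Gamma)$, and set $\Sigma_i := \mathbf{f}_i(\Gamma)$. Because the $\mathbf{f}_i$ are embeddings modeled on $\Gamma$, each $\Sigma_i$ is an asymptotically conical self-expander with the same genus $g$ and number of ends $e$ as $\Gamma$, so $\Sigma_i \in \mathcal{E}^{k,\alpha}_{\mathrm{top}}(g,e)$. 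By the definition of the trace map $\mathrm{tr}^1_\infty$, the link $\mathcal{L}(\Sigma_i)$ is parameterized by $\varphi_i$; in particular $\mathcal{L}(\Sigma_i) \to \sigma$ in $C^{k,\alpha}(\mathbb{S}^2)$, where $\sigma$ denotes the image of $\varphi_\infty$. Theorem~\ref{SmoothCpctThm}(\ref{2DCpctItem}) then produces, after passing to a subsequence, a limit $\Sigma \in \mathcal{E}^{k,\alpha}_{\mathrm{top}}(g,e)$ with $\mathcal{L}(\Sigma) = \sigma$ and $\Sigma_i \to \Sigma$ in $C^\infty_{loc}(\mathbb{R}^3)$.

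It remains to lift this convergence of images to convergence of parameterizations. I would combine two sources of control. On any compact region of $\mathbb{R}^3$, the $C^\infty_{loc}$ convergence $\Sigma_i \to \Sigma$ together with the common topological type allows one to realize $\Sigma_i$ as a normal graph over $\Sigma$ with graph function tending smoothly to zero, and hence to produce diffeomorphisms $\Gamma \to \Sigma_i$ converging smoothly to a diffeomorphism $\Gamma \to \Sigma$. Near infinity, each end of $\Sigma_i$ is a normal graph over the corresponding end of the cone parameterized by $\varphi_i$, and the self-expander equation becomes a uniformly elliptic PDE for the graph function whose solutions admit the weighted $C^{k,\alpha}_*$-estimates established in \cite{BernsteinWangBanach}. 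Combined with $\varphi_i \to \varphi_\infty$ in $C^{k,\alpha}(\mathcal{L}(\Gamma);\mathbb{R}^3)$, these estimates yield uniform $C^{k,\alpha}_*$-bounds on the graph functions and hence convergence of the ends in the weighted topology. Choosing representatives $\mathbf{f}_i$ of $[\mathbf{f}_i]$ that are asymptotic at infinity to the graphs over the cones parameterized by $\varphi_i$, and patching via a cutoff to the interior diffeomorphisms from the first step, produces the required converging sequence with limit $[\mathbf{f}]$ satisfying $\mathbf{f}(\Gamma) = \Sigma$ and $\Pi([\mathbf{f}]) = \varphi_\infty$.

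The main obstacle is precisely this last step: the weighted $C^{k,\alpha}_*$ topology at infinity is strictly stronger than $C^\infty_{loc}$, so the uniform weighted estimates coming from the structure theory of asymptotically conical self-expanders in \cite{BernsteinWangBanach} are essential. One must also take care that the chosen representatives $\mathbf{f}_i$ are compatible with the quotient by reparameterizations that fix the trace at infinity, so as to land in $\mathcal{ACE}^{k,\alpha}_2(\Gamma)$ rather than only to converge up to an arbitrary sequence of diffeomorphisms.
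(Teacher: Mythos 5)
Your proposal follows the paper's strategy exactly: reduce to the smooth compactness theorem for $\mathcal{E}^{k,\alpha}_{\mathrm{top}}(g,e)$ and then lift the $C^\infty_{loc}$ convergence of the images, together with the $C^{k,\alpha}$ convergence of the traces at infinity, to convergence in $\mathcal{ACE}^{k,\alpha}_2(\Gamma)$ --- this lifting step is precisely the paper's Proposition~\ref{ParametrizeProp}, whose proof carries out in detail the weighted-norm upgrade (via the expander equation and the weighted Schauder theory of \cite{BernsteinWangBanach}) that you correctly identify as the main obstacle. The only ingredient you omit is the purely topological passage from sequential compactness of $\Pi^{-1}(K)$ to genuine compactness, which the paper handles with Lemma~\ref{TopCompactLem}.
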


\begin{thm} \label{EntropyProperThm}
For $\Gamma\in\mathcal{ACH}_n^{k,\alpha}$ and $\Lambda>1$, let
$$	
\mathcal{V}_{\mathrm{ent}}(\Gamma,\Lambda)=\set{\varphi\in \mathcal{V}_{\mathrm{emb}}^{k,\alpha}(\Gamma)\colon \lambda[\mathscr{E}^{\mathrm{H}}_1[\varphi](\mathcal{C}(\Gamma))]<\Lambda}
$$
and
$$
\mathcal{U}_{\mathrm{ent}}(\Gamma, \Lambda)=\set{[\mathbf{f}]\in \mathcal{ACE}_n^{k,\alpha}(\Gamma)\colon \lambda[\mathbf{f}(\Gamma)]<\Lambda}.
$$
The following is true:
\begin{enumerate}
\item \label{OpenExpanderItem} $\mathcal{U}_{\mathrm{ent}}(\Gamma, \Lambda)$ is an open subset of $\mathcal{ACE}_n^{k,\alpha}(\Gamma)$.
\item \label{OpenConeItem} $\mathcal{V}_{\mathrm{ent}}(\Gamma,\Lambda)$ is an open subset of $C^{k,\alpha}(\mathcal{L}(\Gamma); \Real^{n+1})$.
\item \label{ProperItem} If $\rstar{n, \Lambda}$ holds for $\Lambda<2$, then $\Pi|_{\mathcal{U}_{\mathrm{ent}}(\Gamma, \Lambda)}\colon\mathcal{U}_{\mathrm{ent}}(\Gamma, \Lambda)\to \mathcal{V}_{\mathrm{ent}}(\Gamma, \Lambda)$ is proper.
\end{enumerate}	 
\end{thm}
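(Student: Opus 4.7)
The plan is to handle the three items in sequence, reducing properness in part \eqref{ProperItem} to Theorem \ref{SmoothCpctThm}\eqref{EntropyCpctItem} combined with the asymptotic regularity theory of \cite{BernsteinWangBanach}.

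For \eqref{OpenConeItem}, since $\mathcal{V}_{\mathrm{emb}}^{k,\alpha}(\Gamma)$ is already open, it suffices to show that $\varphi \mapsto \lambda[\mathscr{E}^{\mathrm{H}}_1[\varphi](\mathcal{C}(\Gamma))]$ is continuous on $\mathcal{V}_{\mathrm{emb}}^{k,\alpha}(\Gamma)$. Because a cone centered at the origin is scale-invariant, its entropy equals the single Gaussian integral $(4\pi)^{-n/2}\int_{\mathcal{C}} e^{-|\mathbf{x}|^2/4}\, d\mathcal{H}^n$, which pulls back under $\mathscr{E}^{\mathrm{H}}_1[\varphi]$ to an integral over $\mathcal{L}(\Gamma)$ depending continuously on $\varphi$ in $C^{k,\alpha}$. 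For \eqref{OpenExpanderItem}, I would analogously show continuity of $[\mathbf{f}] \mapsto \lambda[\mathbf{f}(\Gamma)]$ on $\mathcal{ACE}_n^{k,\alpha}(\Gamma)$: the defining Gaussian integrals on compact subsets are continuous in the parameterization, their tails are controlled uniformly by the asymptotically conical decay built into the topology on $\mathcal{ACE}_n^{k,\alpha}(\Gamma)$, and the supremum in the definition of entropy is attained on a compact range of $(x_0, r)$, so the sup is itself continuous.

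For \eqref{ProperItem}, let $[\mathbf{f}_i] \in \mathcal{U}_{\mathrm{ent}}(\Gamma, \Lambda)$ with $\varphi_i = \Pi([\mathbf{f}_i]) \to \varphi$ in $\mathcal{V}_{\mathrm{ent}}(\Gamma, \Lambda)$. Writing $\Sigma_i = \mathbf{f}_i(\Gamma)$ and $\mathcal{C}_\infty = \mathscr{E}^{\mathrm{H}}_1[\varphi](\mathcal{C}(\Gamma))$, this gives $\mathcal{L}(\Sigma_i) \to \mathcal{L}(\mathcal{C}_\infty)$ in $C^{k,\alpha}$. By the continuity of cone entropy from the previous step, $\lambda[\mathcal{C}(\Sigma_i)] \to \lambda[\mathcal{C}_\infty] < \Lambda$, so I may fix $\Lambda_0 \in (\lambda[\mathcal{C}_\infty], \Lambda)$ with $\lambda[\mathcal{C}(\Sigma_i)] < \Lambda_0$ for $i$ large. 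Huisken monotonicity along the MCF $\{\sqrt{t}\Sigma_i\}_{t>0}$, which converges weakly to $\mathcal{C}(\Sigma_i)$ as $t \to 0^+$, yields the a-priori inequality $\lambda[\Sigma_i] \leq \lambda[\mathcal{C}(\Sigma_i)] < \Lambda_0 < 2$. Consequently $\Sigma_i \in \mathcal{E}_{n,\mathrm{ent}}^{k,\alpha}(\Lambda_0)$, and under $\rstar{n, \Lambda}$ -- which I read as supplying the Assumption required by Theorem \ref{SmoothCpctThm}\eqref{EntropyCpctItem} -- a subsequence satisfies $\Sigma_i \to \Sigma$ in $C^\infty_{loc}(\mathbb{R}^{n+1})$ for some self-expander $\Sigma$ with $\mathcal{L}(\Sigma) = \mathcal{L}(\mathcal{C}_\infty)$.

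To finish, I must upgrade this set-theoretic convergence to convergence in the Banach manifold $\mathcal{ACE}_n^{k,\alpha}(\Gamma)$. Combining the interior $C^\infty_{loc}$ control on $\Sigma_i \to \Sigma$ with the $C^{k,\alpha}$ convergence of the links, the asymptotic regularity theory for the self-expander equation on conical ends from \cite{BernsteinWangBanach} converts the input, together with the uniform entropy bound, into uniform estimates in the weighted H\"older spaces modelling $\mathcal{ACE}_n^{k,\alpha}(\Gamma)$: one represents $\Sigma_i$ as a normal graph over $\Sigma$ outside a compact set with decay controlled by $\|\mathcal{L}(\Sigma_i) - \mathcal{L}(\Sigma)\|_{C^{k,\alpha}}$, glues to the interior smooth convergence, and reparameterizes to match the equivalence built into $\mathcal{ACE}_n^{k,\alpha}(\Gamma)$, producing a limit $[\mathbf{f}] \in \mathcal{U}_{\mathrm{ent}}(\Gamma, \Lambda)$ with $\Pi([\mathbf{f}]) = \varphi$. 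The main obstacle is precisely this final step: passing from convergence of underlying submanifolds with matching asymptotic cones to convergence of parameterizations modulo reparameterization in the Banach manifold, for which the implicit function theorem for the self-expander equation on the ends -- the very engine behind the manifold structure in \cite{BernsteinWangBanach} -- is the essential tool.
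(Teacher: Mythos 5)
Your outline of Item \eqref{ProperItem} matches the paper's architecture: a uniform entropy bound $\Lambda_0<\Lambda$ on the preimage of a compact set, the monotonicity inequality $\lambda[\Sigma_i]\leq\lambda[\mathcal{C}(\Sigma_i)]$, smooth subsequential compactness from Item \eqref{EntropyCpctItem} of Theorem \ref{SmoothCpctThm}, and then an upgrade from $C^\infty_{loc}$ convergence of the submanifolds to convergence in $\mathcal{ACE}^{k,\alpha}_n(\Gamma)$. The paper isolates that last upgrade as Proposition \ref{ParametrizeProp}, and it is the bulk of the work, so leaving it as a sketch is the weakest part of an otherwise correct plan; note also that the paper invokes Lemma \ref{TopCompactLem} to pass from sequential compactness to compactness of $\Pi^{-1}(\mathcal{Z})$, a step your argument should not skip.

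The genuine gaps are in Items \eqref{OpenExpanderItem} and \eqref{OpenConeItem}. For \eqref{OpenConeItem}, it is not true that the entropy of a cone is the single Gaussian integral $F[\mathcal{C}]$: scale invariance only removes the dilation parameter, so $\lambda[\mathcal{C}]=\sup_{\mathbf{x}_0}F[\mathcal{C}+\mathbf{x}_0]$, a supremum over translations that is in general not achieved at $\mathbf{x}_0=\mathbf{0}$. Continuity of $\varphi\mapsto\lambda[\mathscr{E}^{\mathrm{H}}_1[\varphi](\mathcal{C}(\Gamma))]$ therefore requires controlling where this supremum can be attained; the paper does this with Lemma \ref{EntropyUnifLem} (either $\lambda[\mathcal{C}]\leq 1+\epsilon$ or the supremum is attained in a fixed ball $\bar{B}_{\tilde{\mathcal{R}}}$) combined with lower semicontinuity of entropy, yielding Lemma \ref{ContEntropyConeLem}. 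For \eqref{OpenExpanderItem}, your claim that the supremum defining $\lambda[\mathbf{f}(\Gamma)]$ is attained on a compact range of $(\mathbf{x}_0,\rho)$ fails for asymptotically conical self-expanders: by Huisken monotonicity, $F[\rho\Sigma+\mathbf{y}]$ is maximized only in the blow-down limit $\rho\to 0^+$, where $\rho\Sigma\to\mathcal{C}(\Sigma)$, so the supremum is a limit rather than an attained maximum and a direct continuity argument on the expander does not close. The paper instead proves the equality $\lambda[\Sigma]=\lambda[\mathcal{C}(\Sigma)]$ (Lemma \ref{ExpanderConeEntropyLem}, lower semicontinuity in one direction and Huisken monotonicity in the other) and reduces Item \eqref{OpenExpanderItem} to Item \eqref{OpenConeItem} together with continuity of $\Pi$. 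Since you already use the inequality $\lambda[\Sigma_i]\leq\lambda[\mathcal{C}(\Sigma_i)]$ in Item \eqref{ProperItem}, the tool you need is in hand; you must apply it, together with the reverse inequality, to repair the first two items.
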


\begin{thm} \label{MCProperThm}
For $\Gamma\in\mathcal{ACH}_n^{k,\alpha}$, let
$$
\mathcal{V}_{\mathrm{mc}}(\Gamma)=\set{\varphi\in	\mathcal{V}_{\mathrm{emb}}^{k,\alpha}(\Gamma) \colon  H_\sigma>0 \mbox{ where $\sigma=\mathcal{L}[\mathscr{E}^\mathrm{H}_1[\varphi](\mathcal{C}(\Gamma))]$}}
$$
and
$$
\mathcal{U}_{\mathrm{mc}}(\Gamma)=\set{[\mathbf{f}]\in \mathcal{ACE}_n^{k,\alpha}(\Gamma)\colon H_{\mathbf{f}(\Gamma)}>0, \Pi([\mathbf{f}])\in \mathcal{V}_{\mathrm{mc}}(\Gamma)}.
$$
The following is true:
\begin{enumerate}
\item \label{OpenMCExpanderItem} $\mathcal{U}_{\mathrm{mc}}(\Gamma)$ is an open subset of $\mathcal{ACE}_n^{k,\alpha}(\Gamma)$.
\item \label{OpenMCConeItem} $\mathcal{V}_{\mathrm{mc}}(\Gamma)$ is an open subset of $C^{k,\alpha}(\mathcal{L}(\Gamma); \Real^{n+1})$.
\item \label{MCOpenItem} $\Pi|_{\mathcal{U}_{\mathrm{mc}}(\Gamma)}\colon\mathcal{U}_{\mathrm{mc}}(\Gamma)\to \mathcal{V}_{\mathrm{mc}}(\Gamma)$ is a local diffeomorphism.
\item \label{MCProperItem} $\Pi|_{\mathcal{U}_{\mathrm{mc}}(\Gamma)}\colon\mathcal{U}_{\mathrm{mc}}(\Gamma)\to \mathcal{V}_{\mathrm{mc}}(\Gamma)$ is proper.
\end{enumerate}
In particular, for each component $\mathcal{V}^\prime$ of $\mathcal{V}_\mathrm{mc}(\Gamma)$, there is an integer $l^\prime\geq 0$ so $\mathcal{U}^\prime=\Pi^{-1}(\mathcal{V}^\prime)\cap \mathcal{U}_\mathrm{mc}(\Gamma)$ has $l^\prime$ components and for each component $\mathcal{U}^{\prime\prime}$ of $\mathcal{U}^\prime$, $\Pi|_{\mathcal{U}^{\prime\prime}}\colon \mathcal{U}^{\prime\prime}\to \mathcal{V}^\prime$ is a (finite) covering map.
\end{thm}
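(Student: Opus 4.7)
The plan is to establish parts (1)--(4) in sequence. Parts (1) and (2) are direct openness arguments, part (3) rests on the maximum principle using the positive supersolution $H_\Sigma>0$, and part (4) --- the main analytic content --- reduces via the compactness Theorem \ref{SmoothCpctThm}(2) to a lifting step from $C^\infty_{loc}$ convergence of hypersurfaces to $C^{k,\alpha}$ convergence of parameterizations in $\mathcal{ACE}^{k,\alpha}_n(\Gamma)$. For (2), the assignment $\varphi\mapsto \mathcal{L}[\mathscr{E}^{\mathrm{H}}_1[\varphi](\mathcal{C}(\Gamma))]$ is continuous into $C^{k,\alpha}(\mathbb{S}^n)$ on the open set $\mathcal{V}_{\mathrm{emb}}^{k,\alpha}(\Gamma)$, and because $k\geq 2$ the scalar mean curvature $H_\sigma$ depends continuously on $\sigma$ in the $C^{k-2,\alpha}$ topology; strict positivity on the compact link is then an open condition. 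For (1), smoothness of $\Pi$ (from \cite{BernsteinWangBanach}) together with (2) show that $\Pi^{-1}(\mathcal{V}_{\mathrm{mc}}(\Gamma))$ is open, while openness of the condition $H_{\mathbf{f}(\Gamma)}>0$ follows from continuity on compact subsets combined with the asymptotically conical decay of $H_{\mathbf{f}(\Gamma)}$, which near infinity is controlled by the link-mean curvature of $\Pi([\mathbf{f}])$.

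For (3), since $\Pi$ is smooth and Fredholm of index $0$ by \cite{BernsteinWangBanach}, it suffices to show $d\Pi([\mathbf{f}])$ is injective for every $[\mathbf{f}]\in \mathcal{U}_{\mathrm{mc}}(\Gamma)$. Setting $\Sigma=\mathbf{f}(\Gamma)$, an element of $\ker d\Pi([\mathbf{f}])$ corresponds (modulo the tangential equivalence defining $\mathcal{ACE}^{k,\alpha}_n(\Gamma)$) to a normal Jacobi field $u\mathbf{n}_\Sigma$ satisfying $L_\Sigma u=0$ for the self-expander stability operator and whose asymptotic trace at infinity vanishes. Because $\Sigma$ is strictly mean convex, $H_\Sigma>0$ on $\Sigma$ and satisfies a closely related eigenvalue equation (obtained by differentiating the expander equation under dilation). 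A direct computation then shows that $w=u/H_\Sigma$ satisfies a drift-diffusion equation with no zeroth-order term, and the uniform positivity of $H_{\mathcal{L}(\Sigma)}$ on the compact link together with the vanishing trace forces $w\to 0$ at infinity; the maximum principle yields $w\equiv 0$, hence $u\equiv 0$.

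For (4), let $[\mathbf{f}_i]\in \mathcal{U}_{\mathrm{mc}}(\Gamma)$ with $\varphi_i=\Pi([\mathbf{f}_i])\to \varphi_\infty\in \mathcal{V}_{\mathrm{mc}}(\Gamma)$ in $C^{k,\alpha}$. Writing $\Sigma_i=\mathbf{f}_i(\Gamma)$ and $\sigma_i=\mathcal{L}(\Sigma_i)$, the convergence of $\varphi_i$ yields $\sigma_i\to\sigma_\infty$ in $C^{k,\alpha}(\mathbb{S}^n)$. Setting $h_0=\tfrac{1}{2}\min_{\sigma_\infty} H_{\sigma_\infty}>0$ produces $\Sigma_i\in \mathcal{E}^{k,\alpha}_{n,\mathrm{mc}}(h_0)$ for all sufficiently large $i$, so Theorem \ref{SmoothCpctThm}(2) furnishes, after passing to a subsequence, a limit $\Sigma_\infty\in \mathcal{E}^{k,\alpha}_{n,\mathrm{mc}}(h_0)$ in $C^\infty_{loc}(\Real^{n+1})$ with $\mathcal{L}(\Sigma_\infty)=\sigma_\infty$. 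One then writes $\Sigma_i$ eventually as a normal graph over $\Sigma_\infty$ and, using the Banach manifold machinery and asymptotic decay estimates of \cite{BernsteinWangBanach}, upgrades geometric convergence to convergence $[\mathbf{f}_i]\to [\mathbf{f}_\infty]$ in $\mathcal{ACE}^{k,\alpha}_n(\Gamma)$, with the asymptotic data matched by $\varphi_i\to\varphi_\infty$. Since $\Sigma_\infty$ is strictly mean convex and $H_{\sigma_\infty}>0$, we have $[\mathbf{f}_\infty]\in \mathcal{U}_{\mathrm{mc}}(\Gamma)$, establishing properness. The concluding ``in particular'' statement is the standard topological fact that a proper local diffeomorphism between connected Hausdorff manifolds is a finite covering map, applied to each component of $\mathcal{U}^\prime=\Pi^{-1}(\mathcal{V}^\prime)\cap \mathcal{U}_{\mathrm{mc}}(\Gamma)$ mapping onto the connected base $\mathcal{V}^\prime$.

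The main obstacle is the lifting step in (4): converting $C^\infty_{loc}$ convergence of the hypersurfaces $\Sigma_i$ into $C^{k,\alpha}_*$-asymptotically conical convergence of parameterizations modulo the equivalence defining $\mathcal{ACE}^{k,\alpha}_n(\Gamma)$, while preserving the prescribed asymptotic data along the sequence. This requires combining the geometric compactness of Theorem \ref{SmoothCpctThm}(2) with the analytic framework and decay estimates of \cite{BernsteinWangBanach} in a way that behaves well at infinity; the mean convexity hypothesis enters both to produce the uniform lower bound $h_0$ on the link mean curvature used in applying the compactness theorem, and in part (3) to supply the positive supersolution $H_\Sigma$ needed for the maximum principle argument.
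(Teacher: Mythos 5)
Your proposal is correct and follows essentially the same route as the paper: openness for (1)--(2), the Fischer--Colbrie--Schoen mechanism (your quotient $u/H_\Sigma$ computation is exactly the trick the paper invokes to rule out nontrivial Jacobi fields and conclude (3) from \cite[Theorem 1.1 (4)]{BernsteinWangBanach}), and properness via Theorem \ref{SmoothCpctThm} \eqref{MCCpctItem} plus the lifting step, which the paper isolates as Proposition \ref{ParametrizeProp}. The only point you gloss over is that your argument for (4) yields sequential compactness of $\Pi^{-1}(\mathcal{Z})\cap\mathcal{U}_{\mathrm{mc}}(\Gamma)$, and the paper upgrades this to genuine compactness via the elementary Lemma \ref{TopCompactLem}, since metrizability of $\mathcal{ACE}^{k,\alpha}_n(\Gamma)$ is not taken for granted.
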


Finally, as an application of Theorem \ref{MCProperThm} and a result of Huisken \cite{HuiskenMCFSphere}, we have the following existence and uniqueness  result for self-expanders of a given topological type asymptotic to cones that satisfy a natural pinching condition.

\begin{cor}\label{ApplicationCor}
Let $\sigma\subset \mathbb{S}^n$ be a connected, strictly mean convex, $C^{k,\alpha}$-hypersurface. In addition, if $n\geq 3$, suppose that $\sigma$ satisfies
\begin{equation} \label{PinchingEqn}
\left\{
\begin{split}
|A_\sigma|^2 & < \frac{1}{n-2} H_\sigma^2+2,\quad n\geq 4; \\
|A_\sigma|^2 & < \frac{3}{4} H_\sigma^2+\frac{4}{3}, \quad\quad \, \, \, \, n=3.
\end{split} 
\right.
\end{equation}
There exists a smooth self-expander $\Sigma\in \mathcal{ACH}^{k,\alpha}_n$ with $\mathcal{L}(\Sigma)=\sigma$, $H_{\Sigma}>0$ and so $\Sigma$ is diffeomorphic to $\Real^n$. Moreover, if $\pi_0(\mathrm{Diff}^+(\mathbb{S}^{n-1}))=0$, i.e., the group of orientation-preserving diffeomorphisms of $\mathbb{S}^{n-1}$ is path-connected, then $\Sigma$ is the unique self-expander with these properties.
\end{cor}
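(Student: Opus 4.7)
The plan is to combine Theorem \ref{MCProperThm} with a continuity method driven by Huisken's mean curvature flow in $\mathbb{S}^n$, using the rotationally symmetric self-expander over a round cone as the starting point.

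\textbf{Step 1 (Deformation to a round sphere).} The inequalities in \eqref{PinchingEqn} are precisely Huisken's pinching hypotheses \cite{HuiskenMCFSphere}, which guarantee that the MCF of $\sigma$ inside $\mathbb{S}^n$ exists on a maximal time interval $[0,T)$, remains strictly mean convex and pinched throughout, and after intrinsic rescaling converges smoothly as $t\uparrow T$ to a small round sphere. Truncating the flow before $T$ and concatenating with a short linear deformation in $C^{k,\alpha}(\mathbb{S}^n)$ yields a continuous one-parameter family $\{\sigma_s\}_{s\in[0,1]}$ of strictly mean convex $C^{k,\alpha}$-hypersurfaces with $\sigma_0=\sigma^*$ a fixed small round sphere and $\sigma_1=\sigma$. (For $n=2$ the pinching is vacuous and curve shortening flow in $\mathbb{S}^2$ plays the same role.)

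\textbf{Step 2 (Explicit expander over the round cone).} On the round cone $\mathcal{C}(\sigma^*)$ the self-expander equation reduces to an ODE for the radial profile, and standard shooting produces a smooth, strictly mean convex, rotationally symmetric solution $\Sigma^*$ diffeomorphic to $\mathbb{R}^n$. Take $\Gamma=\Sigma^*$ as the reference hypersurface in $\mathcal{ACH}_n^{k,\alpha}$; the identity parameterization then defines a point $[\mathrm{id}]\in\mathcal{U}_{\mathrm{mc}}(\Gamma)$ with $\Pi([\mathrm{id}])=\varphi^*$ a parameterization of $\sigma^*$.

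\textbf{Step 3 (Existence by path lifting).} By Theorem \ref{MCProperThm}(2), $\mathcal{V}_{\mathrm{mc}}(\Gamma)$ is open in $C^{k,\alpha}(\mathcal{L}(\Gamma);\mathbb{R}^{n+1})$, so the path $\sigma_s$ lifts to a continuous path $\varphi_s\in\mathcal{V}_{\mathrm{mc}}(\Gamma)$ with $\varphi_0=\varphi^*$. By Theorem \ref{MCProperThm}(3)--(4), $\Pi\colon\mathcal{U}_{\mathrm{mc}}(\Gamma)\to\mathcal{V}_{\mathrm{mc}}(\Gamma)$ is a proper local diffeomorphism, hence a covering map on each component of its image. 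Path lifting produces $[\mathbf{f}_s]\in\mathcal{U}_{\mathrm{mc}}(\Gamma)$ with $[\mathbf{f}_0]=[\mathrm{id}]$ and $\Pi([\mathbf{f}_s])=\varphi_s$. The expander $\Sigma=\mathbf{f}_1(\Gamma)$ then satisfies $\mathcal{L}(\Sigma)=\sigma$, $H_\Sigma>0$, and is diffeomorphic to $\Gamma\cong\mathbb{R}^n$, giving existence.

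\textbf{Step 4 (Uniqueness).} Because any two parameterizations of a fixed geometric self-expander sharing a prescribed asymptotic trace become identified in $\mathcal{ACE}_n^{k,\alpha}(\Gamma)$, the fiber of $\Pi|_{\mathcal{U}_{\mathrm{mc}}(\Gamma)}$ above $\varphi$ is in bijection with the set of mean convex self-expanders diffeomorphic to $\mathbb{R}^n$ asymptotic to the parameterized cone carried by $\varphi$. Over $\varphi^*$ this set is a singleton: an Alexandrov-type moving planes argument, exploiting the $O(n)$-symmetry of $\mathcal{C}(\sigma^*)$ together with mean convexity, forces any such expander to be rotationally symmetric, and rotational symmetry reduces matters to the ODE of Step 2 which admits at most one admissible profile. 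Hence the covering degree is $1$ on the component of $\varphi^*$. The path of Step 1, together with the hypothesis $\pi_0(\mathrm{Diff}^+(\mathbb{S}^{n-1}))=0$, places every orientation-preserving parameterization of $\sigma$ in the same component as $\varphi^*$; combined with degree $1$, this forces uniqueness of the underlying geometric self-expander.

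The main obstacle will be Step 4. Two nontrivial inputs enter there: (i) the claim that the unique mean convex expander diffeomorphic to $\mathbb{R}^n$ over a round cone is the rotationally symmetric one, which requires a moving planes argument carefully adapted to asymptotically conical expanders; and (ii) translating $\pi_0(\mathrm{Diff}^+(\mathbb{S}^{n-1}))=0$ into path connectedness of the orbit of parameterizations of $\sigma$ inside $\mathcal{V}_{\mathrm{mc}}(\Gamma)$, so that the degree computation at $\varphi^*$ can be transported to any parameterization of $\sigma$. Steps 1--3 are then essentially formal consequences of Huisken's theorem and the covering structure provided by Theorem \ref{MCProperThm}.
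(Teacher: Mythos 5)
Your overall architecture coincides with the paper's: deform $\sigma$ to a rotationally symmetric link via Huisken's flow (Grayson/Zhu when $n=2$), produce a distinguished mean convex expander over the symmetric cone to serve as $\Gamma$, lift the path of links through the proper local diffeomorphism $\Pi|_{\mathcal{U}_{\mathrm{mc}}(\Gamma)}$ of Theorem \ref{MCProperThm}, and reduce uniqueness to a degree-one count at the symmetric cone plus the $\pi_0(\mathrm{Diff}^+(\mathbb{S}^{n-1}))=0$ connectedness argument. The one place you genuinely diverge is the base case over the symmetric cone, which is exactly where you (correctly) locate the remaining work. The paper isolates this as Proposition \ref{ApplicationProp}: existence comes not from ODE shooting but from running graphical mean curvature flow out of the cone \`{a} la Ecker--Huisken, with Brakke compactness and the parabolic maximum principle forcing self-similarity of the limit; uniqueness comes not from moving planes but from sliding $\Sigma$ vertically along the axis of symmetry and applying the strong maximum principle to $H+\tfrac{1}{2}\mathbf{x}\cdot\mathbf{n}$, which yields uniqueness among \emph{all} smooth expanders asymptotic to the cone, not merely the mean convex ones diffeomorphic to $\mathbb{R}^n$. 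This is both simpler and stronger than the Alexandrov reflection you propose, which for a noncompact asymptotically conical hypersurface would require nontrivial barriers at infinity (supplied in the paper's argument by Item \eqref{LinearDecayItem} of Proposition \ref{AsympRegProp}) and is left unexecuted in your sketch. Your item (ii) -- transporting the degree computation across different parameterizations of the same geometric cone -- is also handled in the paper by an explicit path of homogeneous diffeomorphisms $\Phi_t$ lifted to reparameterizations $[\mathbf{h}_t]$ of $\Gamma$, together with an orientation-reversing involution $\tilde{I}$ to relate the two components $\mathcal{V}_\pm$; your sketch identifies the issue but does not resolve it. So: right skeleton, but the two load-bearing steps you defer are precisely the content of the paper's Section \ref{ApplicationSec}, and the paper's choices there (graphical MCF plus vertical sliding) are the recommended replacements for shooting plus moving planes.
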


\begin{rem}
Hypothesis \eqref{PinchingEqn} is required only so that classical mean curvature flow can be used to show the space of admissible $\sigma$ is path-connected.
\end{rem}

\begin{rem}
By work of Cerf \cite{Cerf1, Cerf2} and Smale \cite{SmaleSphere, SmaleCobordism} it is known, for $n\in \set{2,3,4,6}$, that $\pi_0(\mathrm{Diff}^+(\mathbb{S}^{n-1}))=0$. 
\end{rem}

\begin{rem}
Using only the mean convexity condition, a variational argument due to Ilmanen that is sketched in \cite{IlmanenLec} and carried out by Ding in \cite{Ding} gives the existence of a self-expanding solution with link $\sigma$. However, this method cannot directly say anything about the topology of the constructed self-expanders.  
\end{rem}

\section{Notation and background} \label{NotationSec}
In this section we fix notation and also recall the main definitions from \cite{BernsteinWangBanach} we need.  The interested reader should consult Sections 2 and 3 of \cite{BernsteinWangBanach} for specifics and further details.

\subsection{Basic notions} \label{NotionSubsec}
Denote a (open) ball in $\mathbb{R}^n$ of radius $R$ and center $x$ by $B_R^n(x)$ and the closed ball by $\bar{B}^n_R(x)$. We often omit the superscript $n$ when its value is clear from context. We also omit the center when it is the origin. 

For an open set $U\subset\mathbb{R}^{n+1}$, a \emph{hypersurface in $U$}, $\Gamma$, is a smooth, properly embedded, codimension-one  submanifold of $U$. We also consider hypersurfaces of lower regularity and given an integer $k\geq 2$ and $\alpha\in (0,1)$ we define a \emph{$C^{k,\alpha}$-hypersurface in $U$} to be a properly embedded, codimension-one $C^{k,\alpha}$ submanifold of $U$. When needed, we distinguish between a point $p\in\Gamma$ and its \emph{position vector} $\mathbf{x}(p)$.

Consider the hypersurface $\mathbb{S}^n\subset\mathbb{R}^{n+1}$, the unit $n$-sphere in $\mathbb{R}^{n+1}$. A \emph{hypersurface in $\mathbb{S}^n$}, $\sigma$, is a closed, embedded, codimension-one smooth submanifold of $\mathbb{S}^n$ and \emph{$C^{k,\alpha}$-hypersurfaces in $\mathbb{S}^n$} are defined likewise. Observe, that $\sigma$ is a closed codimension-two submanifold of $\mathbb{R}^{n+1}$ and so we may associate to each point $p\in\sigma$ its position vector $\mathbf{x}(p)$. Clearly, $|\mathbf{x}(p)|=1$.

A \emph{cone} is a set $\mathcal{C}\subset\mathbb{R}^{n+1}\setminus\{\mathbf{0}\}$ that is dilation invariant around the origin. That is, $\rho\,\mathcal{C}=\mathcal{C}$ for all $\rho>0$. The \emph{link} of the cone is the set $\mathcal{L}[\mathcal{C}]=\mathcal{C}\cap\mathbb{S}^{n}$. The cone is \emph{regular} if its link is a smooth hypersurface in $\mathbb{S}^{n}$ and \emph{$C^{k,\alpha}$-regular} if its link is a $C^{k,\alpha}$-hypersurface in $\mathbb{S}^n$. For any hypersurface $\sigma\subset\mathbb{S}^n$ the \emph{cone over $\sigma$}, $\mathcal{C}[\sigma]$, is the cone defined by 
$$
\mathcal{C}[\sigma]=\left\{\rho p\colon p\in\sigma, \rho>0\right\}\subset\mathbb{R}^{n+1}\setminus\{\mathbf{0}\}.
$$
Clearly, $\mathcal{L}[\mathcal{C}[\sigma]]=\sigma$. 

\subsection{Function spaces} \label{FunctionSubsec}
Let $\Gamma$ be a properly embedded, $C^{k,\alpha}$ submanifold of an open set $U\subset\mathbb{R}^{n+1}$. There is a natural Riemannian metric, $g_\Gamma$, on $\Gamma$ of class $C^{k-1,\alpha}$ induced from the Euclidean one. As we always take $k\geq 2$, the Christoffel symbols of this metric, in appropriate coordinates, are well-defined and of regularity $C^{k-2,\alpha}$. Let $\nabla_\Gamma$ be the covariant derivative on $\Gamma$. Denote by $d_\Gamma$ the geodesic distance on $\Gamma$ and by $B^\Gamma_R(p)$ the (open) geodesic ball in $\Gamma$ of radius $R$ and center $p\in\Gamma$. For $R$ small enough so that $B_{R}^\Gamma(p)$ is strictly geodesically convex and $q\in B^\Gamma_R(p)$, denote by $\tau^\Gamma_{p,q}$ the parallel transport along the unique minimizing geodesic in $B^\Gamma_R(p)$ from $p$ to $q$. 

Throughout the rest of this subsection, let $\Omega$ be a domain in $\Gamma$, $l$ an integer in $[0,k]$, $\beta\in (0,1)$ and $d\in\mathbb{R}$. Suppose $l+\beta\leq k+\alpha$. We first consider the following norm for functions on $\Omega$:
$$
\Vert f\Vert_{l; \Omega}=\sum_{i=0}^l \sup_{\Omega} |\nabla_\Gamma^i f|.
$$
We then let
$$
C^l(\Omega)=\left\{f\in C_{loc}^l(\Omega)\colon \Vert f\Vert_{l; \Omega}<\infty\right\}.
$$
We next define the H\"{o}lder semi-norms for functions $f$ and tensor fields $T$ on $\Omega$: 
$$
[f]_{\beta; \Omega} =\sup_{\substack{p,q\in\Omega \\ q\in B^\Gamma_{\delta}(p)\setminus\{p\}}} \frac{|f(p)-f(q)|}{d_\Gamma(p,q)^\beta} 
\mbox{ and } 
[T]_{\beta; \Omega} =\sup_{\substack{p,q\in\Omega \\ q\in B^\Gamma_{\delta}(p)\setminus\{p\}}} \frac{|T(p)-(\tau^\Gamma_{p,q})^* T(q)|}{d_\Gamma(p,q)^\beta},
$$
where $\delta=\delta(\Gamma,\Omega)>0$ so that for all $p\in\Omega$, $B^\Gamma_\delta(p)$ is strictly geodesically convex. We further define the norm for functions on $\Omega$:
$$
\Vert f\Vert_{l, \beta; \Omega}=\Vert f\Vert_{l; \Omega}+[\nabla_\Gamma^l f]_{\beta; \Omega},
$$
and let 
$$
C^{l, \beta}(\Omega)=\left\{f\in C_{loc}^{l, \beta}(\Omega)\colon \Vert f\Vert_{l, \beta; \Omega}<\infty\right\}.
$$

We also define the following weighted norms for functions on $\Omega$:
$$
\Vert f\Vert_{l; \Omega}^{(d)}=\sum_{i=0}^l\sup_{p\in\Omega} \left(|\mathbf{x}(p)|+1\right)^{-d+i} |\nabla_\Gamma^i f(p)|.
$$
We then let 
$$
C^{l}_d(\Omega)=\left\{f\in C^l_{loc}(\Omega)\colon \Vert f\Vert_{l; \Omega}^{(d)}<\infty\right\}.
$$
We further define the following weighted H\"{o}lder semi-norms for functions $f$ and tensor fields $T$ on $\Omega$:
\begin{align*}
[f]_{\beta; \Omega}^{(d)} & =\sup_{\substack{p,q\in\Omega \\ q\in B^\Gamma_{\delta_p}(p)\setminus\{p\}}} \left((|\mathbf{x}(p)|+1)^{-d+\beta}+(|\mathbf{x}(q)|+1)^{-d+\beta}\right) \frac{|f(p)-f(q)|}{d_\Gamma(p,q)^\beta}, \mbox{ and}, \\
[T]_{\beta; \Omega}^{(d)} & =\sup_{\substack{p,q\in\Omega \\ q\in B^\Gamma_{\delta_p}(p)\setminus\{p\}}} \left((|\mathbf{x}(p)|+1)^{-d+\beta}+(|\mathbf{x}(q)|+1)^{-d+\beta}\right) \frac{|T(p)-(\tau^\Gamma_{p,q})^* T(q)|}{d_\Gamma(p,q)^\beta},
\end{align*}
where $\eta=\eta(\Omega,\Gamma)\in (0,\frac{1}{4})$ so that for any $p\in\Gamma$, letting $\delta_p=\eta (|\mathbf{x}(p)|+1)$, $B_{\delta_p}^\Gamma(p)$ is strictly geodesically convex. Finally, we define the norm for functions on $\Omega$:
$$
\Vert f\Vert_{l, \beta; \Omega}^{(d)}=\Vert f\Vert_{l; \Omega}^{(d)}+[\nabla_\Gamma^l f]_{\beta; \Omega}^{(d-l)},
$$
and we let
$$
C^{l,\beta}_d(\Omega)=\left\{f\in C^{l,\beta}_{loc}(\Omega)\colon \Vert f\Vert_{l, \beta; \Omega}^{(d)}<\infty\right\}.
$$
We follow the convention that $C^{l,0}_{loc}=C^{l}_{loc}$, $C^{l,0}=C^l$ and $C^{l,0}_d=C^l_d$ and that $C^{0, \beta}_{loc}=C^\beta_{loc}$,  $C^{0,\beta}=C^\beta$ and $C^{0,\beta}_d=C^\beta_d$. The notation for the corresponding norms is abbreviated in the same fashion.

\subsection{Homogeneous functions and homogeneity at infinity} \label{HomoFuncSubsec}
Fix a $C^{k,\alpha}$-regular cone $\mathcal{C}$ with its link $\mathcal{L}$. By our definition $\mathcal{C}$ is a $C^{k,\alpha}$-hypersurface in $\mathbb{R}^{n+1}\setminus\{\mathbf{0}\}$. For $R>0$ let $\mathcal{C}_R=\mathcal{C}\setminus\bar{B}_R$. There is an $\eta=\eta(\mathcal{L},R)>0$ so that for any $p\in\mathcal{C}_R$, $B^{\mathcal{C}}_{\delta_p}(p)$ is strictly geodesically convex, where $\delta_p=\eta(|\mathbf{x}(p)|+1)$. We also fix an integer $l\in [0,k]$ and $\beta\in [0,1)$ with $l+\beta\leq k+\alpha$.

A map $\mathbf{f}\in C^{l,\beta}_{loc}(\mathcal{C}; \mathbb{R}^M)$ is \emph{homogeneous of degree $d$} if $\mathbf{f}(\rho p)=\rho^d \mathbf{f}(p)$ for all $p\in\mathcal{C}$ and $\rho>0$. Given a map $\varphi\in C^{l,\beta}(\mathcal{L}; \mathbb{R}^M)$ the \emph{homogeneous extension of degree $d$} of $\varphi$ is the map $\mathscr{E}_d^{\mathrm{H}}[\varphi]\in C^{l,\beta}_{loc}(\mathcal{C}; \mathbb{R}^M)$ defined by 
$$
\mathscr{E}_d^{\mathrm{H}}[\varphi](p)=|\mathbf{x}(p)|^d \varphi(|\mathbf{x}(p)|^{-1}p).
$$
Conversely, given a homogeneous $\mathbb{R}^M$-valued map of degree $d$, $\mathbf{f}\in C^{l,\beta}_{loc}(\mathcal{C}; \mathbb{R}^M)$, let $\varphi=\mathrm{tr}[\mathbf{f}]\in C^{l,\beta}(\mathcal{L}; \mathbb{R}^M)$, the \emph{trace} of $\mathbf{f}$, be the restriction of $\mathbf{f}$ to $\mathcal{L}$. Clearly, $\mathbf{f}$ is the homogeneous extension of degree $d$ of $\varphi$.

A map $\mathbf{g}\in C^{l,\beta}_{loc}(\mathcal{C}_R; \mathbb{R}^M)$ is \emph{asymptotically homogeneous of degree $d$} if 
$$
\lim_{\rho\to 0^+} \rho^d \mathbf{g}(\rho^{-1}p)=\mathbf{f}(p) \mbox{ in $C^{l,\beta}_{loc}(\mathcal{C}; \mathbb{R}^M)$}
$$
for some $\mathbf{f}\in C^{l,\beta}_{loc}(\mathcal{C}; \mathbb{R}^M)$ that is homogeneous of degree $d$. For such a $\mathbf{g}$ we define the \emph{trace at infinity} of $\mathbf{g}$ by $\mathrm{tr}^d_\infty[\mathbf{g}]=\mathrm{tr}[\mathbf{f}]$. 
We define
$$
C^{l,\beta}_{d,\mathrm{H}}(\mathcal{C}_R; \mathbb{R}^M)=\left\{\mathbf{g}\in C^{l,\beta}_d(\mathcal{C}_R; \mathbb{R}^M)\colon \mbox{$\mathbf{g}$ is asymptotically homogeneous of degree $d$}\right\}.
$$
It is straightforward to verify that $C^{l,\beta}_{d,\mathrm{H}}(\mathcal{C}_R; \mathbb{R}^M)$ is a closed subspace of $C^{l,\beta}_d(\mathcal{C}_R; \mathbb{R}^M)$ and that
$$
\mathrm{tr}^d_\infty\colon C^{l,\beta}_{d,\mathrm{H}}(\mathcal{C}_R; \mathbb{R}^M)\to C^{l,\beta}(\mathcal{L}; \mathbb{R}^M)
$$
is a bounded linear map. Finally, $\mathbf{x}|_{\mathcal{C}_R}\in C^{k,\alpha}_{1,\mathrm{H}}(\mathcal{C}_R; \mathbb{R}^{n+1})$ and $\mathrm{tr}_\infty^1[\mathbf{x}|_{\mathcal{C}_R}]=\mathbf{x}|_{\mathcal{L}}$.

\subsection{Asymptotically conical hypersurfaces} \label{ACHSec}
A $C^{k,\alpha}$-hypersurface, $\Gamma\subset\mathbb{R}^{n+1}$, is \emph{$C^{k,\alpha}_{*}$-asymptotically conical} if there is a $C^{k,\alpha}$-regular cone, $\mathcal{C}\subset\mathbb{R}^{n+1}$, and a homogeneous transverse section, $\mathbf{v}$, on $\mathcal{C}$ such that $\Gamma$, outside some compact set, is given by the $\mathbf{v}$-graph of a function in $C^{k,\alpha}_1\cap C^{k}_{1,0}(\mathcal{C}_R)$ for some $R>1$. Here a transverse section is a regularized version of the unit normal -- see Section 2.4 of \cite{BernsteinWangBanach} for the precise definition. Observe, that by the Arzel\`{a}-Ascoli theorem one has that, for every $\beta\in [0, \alpha)$,
$$
\lim_{\rho\to 0^+} \rho\Gamma = \mathcal{C} \mbox{ in } C^{k, \beta}_{loc}(\mathbb{R}^{n+1}\setminus \{\mathbf{0}\}).
$$
Clearly, the asymptotic cone, $\mathcal{C}$, is uniquely determined by $\Gamma$ and so we denote it by $\mathcal{C}(\Gamma)$. Let $\mathcal{L}(\Gamma)$ denote the link of $\mathcal{C}(\Gamma)$ and, for $R>0$, let $\mathcal{C}_R(\Gamma)=\mathcal{C}(\Gamma)\setminus \bar{B}_R$. Denote the space of $C^{k,\alpha}_{*}$-asymptotically conical $C^{k,\alpha}$-hypersurfaces in $\mathbb{R}^{n+1}$ by $\mathcal{ACH}^{k,\alpha}_n$.

Finally, let $K$ be a compact set of $\Gamma$ and denote by $\Gamma^\prime=\Gamma\setminus K$. By definition, we may choose $K$ large enough so $\pi_{\mathbf{v}}$ -- the projection of a neighborhood of $\mathcal{C}(\Gamma)$ along $\mathbf{v}$ --  restricts to a $C^{k,\alpha}$ diffeomorphism of $\Gamma^\prime$ onto $\mathcal{C}_R(\Gamma)$. Denote its inverse by $\theta_{\mathbf{v}; \Gamma^\prime}$. 

\subsection{Traces at infinity} \label{TraceSubsec}
Fix an element $\Gamma\in\mathcal{ACH}_n^{k,\alpha}$. Let $l$ be an integer in $[0,k]$ and $\beta\in [0,1)$ such that $l+\beta<k+\alpha$. A map $\mathbf{f}\in C_{loc}^{l,\beta}(\Gamma; \mathbb{R}^M)$ is \emph{asymptotically homogeneous of degree $d$} if $\mathbf{f}\circ\theta_{\mathbf{v}; \Gamma^\prime}\in C^{l,\beta}_{d,\mathrm{H}}(\mathcal{C}_R(\Gamma); \mathbb{R}^M)$ where $\mathbf{v}$ is a homogeneous transverse section on $\mathcal{C}(\Gamma)$ and $\Gamma^\prime, \theta_{\mathbf{v}; \Gamma^\prime}$ are introduced in the previous subsection. The \emph{trace at infinity} of $\mathbf{f}$ is then
$$
\mathrm{tr}_\infty^d[\mathbf{f}]=\mathrm{tr}_\infty^d[\mathbf{f}\circ\theta_{\mathbf{v}; \Gamma^\prime}] \in C^{l,\beta}(\mathcal{L}(\Gamma); \mathbb{R}^M).
$$
Whether $\mathbf{f}$ is asymptotically homogeneous of degree $d$ and the definition of $\mathrm{tr}_{\infty}^d$ are independent of the choice of homogeneous transverse sections on $\mathcal{C}(\Gamma)$. Clearly, $\mathbf{x}|_{\Gamma}$ is asymptotically homogeneous of degree $1$ and $\mathrm{tr}_\infty^{1}[\mathbf{x}|_\Gamma]=\mathbf{x}|_{\mathcal{L}(\Gamma)}$.

We next define the space
$$
C_{d,\mathrm{H}}^{l,\beta}(\Gamma; \mathbb{R}^M)=\left\{\mathbf{f}\in C_{d}^{l,\beta}(\Gamma; \mathbb{R}^M)\colon \mbox{$\mathbf{f}$ is asymptotically homogeneous of degree $d$}\right\}.
$$
One can check that $C_{d,\mathrm{H}}^{l,\beta}(\Gamma; \mathbb{R}^M)$ is a closed subspace of $C_d^{l,\beta}(\Gamma; \mathbb{R}^M)$, and the map 
$$
\mathrm{tr}_{\infty}^d\colon C_{d,\mathrm{H}}^{l,\beta}(\Gamma; \mathbb{R}^M)\to C^{l,\beta}(\mathcal{L}(\Gamma); \mathbb{R}^M)
$$
is a bounded linear map. We further define the set $C^{l,\beta}_{d,0}(\Gamma;\mathbb{R}^M)\subset C_{d,\mathrm{H}}^{l,\beta}(\Gamma; \mathbb{R}^M)$ to be the kernel of $\mathrm{tr}_\infty^d$.

\subsection{Asymptotically conical embeddings} \label{ACESubsec}
Fix an element $\Gamma\in \mathcal{ACH}^{k,\alpha}_n$. We define the space of $C^{k,\alpha}_{*}$-asymptotically conical embeddings of $\Gamma$ into $\mathbb{R}^{n+1}$ to be
$$
\mathcal{ACH}^{k,\alpha}_n(\Gamma)=\left\{\mathbf{f}\in C_{1}^{k,\alpha}\cap C^k_{1,\mathrm{H}}(\Gamma; \mathbb{R}^{n+1})\colon\mbox{$\mathbf{f}$ and $\mathscr{E}_{1}^{\mathrm{H}}\circ\mathrm{tr}_\infty^1[\mathbf{f}]$ are embeddings}\right\}.
$$
Clearly, $\mathcal{ACH}^{k,\alpha}_n(\Gamma)$ is an open set of the Banach space $C^{k,\alpha}_{1}\cap C^{k}_{1,\mathrm{H}}(\Gamma; \mathbb{R}^{n+1})$ with the $\Vert\cdot \Vert_{k,\alpha}^{(1)}$ norm. The hypotheses on $\mathbf{f}$, $\mathrm{tr}_\infty^1[\mathbf{f}]\in C^{k,\alpha}(\mathcal{L}(\Gamma);\mathbb{R}^{n+1})$ ensure
$$
\mathcal{C}[\mathbf{f}]=\mathscr{E}_{1}^{\mathrm{H}}\circ\mathrm{tr}_\infty^1[\mathbf{f}]\colon \mathcal{C}(\Gamma)\to \mathbb{R}^{n+1}\setminus\{\mathbf{0}\}
$$
is a $C^{k,\alpha}$ embedding. As this map is homogeneous of degree one, it parameterizes the $C^{k,\alpha}$-regular cone $\mathcal{C}(\mathbf{f}(\Gamma))$ -- see  \cite[Proposition 3.3]{BernsteinWangBanach}.

Finally, we introduce a natural equivalence relation on $\mathcal{ACH}_{n}^{k,\alpha}(\Gamma)$. First, say a $C^{k,\alpha}$ diffeomorphism $\phi\colon\Gamma\to \Gamma$ \emph{fixes infinity} if $\mathbf{x}|_{\Gamma}\circ \phi\in \mathcal{ACH}^{k,\alpha}_n(\Gamma)$ and
$$
\mathrm{tr}^1_{\infty}[\mathbf{x}|_{\Gamma}\circ \phi]=\mathbf{x}|_{\mathcal{L}(\Gamma)}.
$$ 
Two elements $\mathbf{f}, \mathbf{g}\in\mathcal{ACH}_{n}^{k,\alpha}(\Gamma)$ are equivalent, written $\mathbf{f}\sim\mathbf{g}$, provided there is a $C^{k,\alpha}$ diffeomorphism $\phi\colon\Gamma\to\Gamma$ that fixes infinity so that $\mathbf{f}\circ\phi=\mathbf{g}$. Given $\mathbf{f}\in \mathcal{ACH}_{n}^{k,\alpha}(\Gamma)$ let $[\mathbf{f}]$ be the equivalence class of $\mathbf{f}$. Following \cite{BernsteinWangBanach} we define the space
\begin{equation}
\label{ACEDef}
\mathcal{ACE}_n^{k,\alpha}(\Gamma)=\set{[\mathbf{f}]\colon \mathbf{f}\in \mathcal{ACH}^{k,\alpha}_n(\Gamma) \mbox{ and $\mathbf{f}(\Gamma)$ satisfies \eqref{ExpanderEqn}}}.
\end{equation}

\section{Smooth compactness} \label{CpctSec}
In this section we prove Theorem \ref{SmoothCpctThm}. We first prove compactness in the asymptotic region and then treat the three special cases separately.

\subsection{Asymptotic regularity of self-expanders} \label{ACRegSubsec}
Fix a unit vector $\mathbf{e}$, a point $\mathbf{x}_0\in\mathbb{R}^{n+1}$ and $r,h>0$. Let 
$$
C_{\mathbf{e}}(\mathbf{x}_0,r,h)=\set{\mathbf{x}\in\mathbb{R}^{n+1}\colon |(\mathbf{x}-\mathbf{x}_0)\cdot\mathbf{e}|<h, |\mathbf{x}-\mathbf{x}_0|^2<r^2+|(\mathbf{x}-\mathbf{x}_0)\cdot\mathbf{e}|^2}
$$
be the solid open cylinder with axis $\mathbf{e}$ centered at $\mathbf{x}_0$ and of radius $r$ and height $2h$.

\begin{defn}
Suppose that $l\geq 0$ is an integer and $\beta\in [0,1)$. A hypersurface $\Sigma\subset\mathbb{R}^{n+1}$ is a $C^{l,\beta}$ $\mathbf{e}$-graph of size $\delta$ on scale $r$ at $\mathbf{x}_0$ if there is a function $f\colon B^n_r\subset P_{\mathbf{e}}\to\mathbb{R}$ with
$$
\sum_{j=0}^l r^{-1+j} \Vert\nabla^j f\Vert_0+r^{-1+l+\beta} [\nabla^l f]_\beta < \delta,
$$
where $P_\mathbf{e}$ is the $n$-dimensional subspace of $\mathbb{R}^{n+1}$ normal to $\mathbf{e}$ and the last term on the left hand side will be dropped if $\beta=0$, so that 
$$
\Sigma\cap C_{\mathbf{e}}(\mathbf{x}_0, r, \delta r)=\set{\mathbf{x}_0+\mathbf{x}(x)+f(x)\mathbf{e}\colon x\in B_r^n}.
$$
Moreover, a hypersurface $\sigma\subset\mathbb{S}^n$ is a $C^{l,\beta}$ $\mathbf{e}$-graph of size $\delta$ on scale $r$ at $\mathbf{x}_0$ if $\mathcal{C}[\sigma]$ is so. We omit $C^{l,\beta}$ in the above definitions when the hypersurface is of $C^{l,\beta}$ class or when it is clear from context.
\end{defn}

Let us summarize some elementary properties of this concept. 

\begin{prop}\label{SumProp}
Let $l\geq 2$ be an integer and $\beta\in [0,1)$. The following is true:
\begin{enumerate}
\item \label{Sum1Item} If $\Sigma$ is a $C^{l,\beta}$-hypersurface in $\mathbb{R}^{n+1}$, then for every $\delta>0$ and $p\in\Sigma$, there is an $r=r(\Sigma,p,\delta)>0$ so that $\Sigma$ is an $\mathbf{n}_\Sigma(p)$-graph of size $\delta$ on scale $r$ at $p$.
\item \label{Sum2Item} If $\sigma\subset\mathbb{S}^n$ is an $\mathbf{e}$-graph of size $\delta$ on scale $r$ at $\mathbf{x}_0$ and $\rho>0$, then $\mathcal{C}[\sigma]$ is an $\mathbf{e}$-graph of size $\delta$ on scale $\rho r$ at $\rho\mathbf{x}_0$.
\item \label{Sum3Item} Given a $C^{l,\beta}$-hypersurface $\sigma\subset\mathbb{S}^n$ and $\delta>0$, there is an $r=r(\sigma,\delta)>0$ so that $\sigma$ is an $\mathbf{n}_{\mathcal{C}[\sigma]}(p)$-graph of size $\delta$ on scale $r$ at every $p\in\sigma$.
\item \label{Sum4Item} Suppose $\sigma_i\subset\mathbb{S}^n$ converges in $C^{l,\beta}(\mathbb{S}^n)$ to $\sigma$. If $p_i\in\sigma_i\to p\in\sigma$ and $\sigma$ is an $\mathbf{n}_{\mathcal{C}[\sigma]}(p)$-graph of size $\delta$ on scale $2r$ at $p$, then there is an $i_0$ so that for $i\geq i_0$, $\sigma_i$ is an $\mathbf{n}_{\mathcal{C}[\sigma_i]}(p_i)$-graph of size $2\delta$ on scale $r$ at $p_i$.
\end{enumerate}
\end{prop}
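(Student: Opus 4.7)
All four parts are standard graphical facts; the plan is to deduce them in order using only the implicit function theorem, Taylor's theorem, scale invariance of cones, and compactness of $\sigma\subset\mathbb{S}^n$. For part (1), I will parameterize $\Sigma$ near $p$ as the graph of a $C^{l,\beta}$ function $f$ over the affine tangent plane $p+P_{\mathbf{n}_\Sigma(p)}$ (via the implicit function theorem), normalized so $f(0)=0$ and $\nabla f(0)=0$. Taylor's theorem then gives $\|f\|_{0;B_r^n}=O(r^2)$ and $\|\nabla f\|_{0;B_r^n}=O(r)$, while $\|\nabla^j f\|_{0;B_r^n}$ for $2\leq j\leq l$ and $[\nabla^l f]_{\beta;B_r^n}$ are uniformly bounded on a small neighborhood. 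Each term $r^{-1+j}\|\nabla^j f\|_0$ in the size functional is then $O(r)$ (since $-1+j\geq 1$ for $j\geq 2$, and $-1+j\geq 0$ while $f$ vanishes to order $2-j$ for $j=0,1$), and the H\"{o}lder term is $O(r^{-1+l+\beta})=O(r)$ because $l\geq 2$. Taking $r$ small enforces the size bound $<\delta$. This is the step where the hypothesis $l\geq 2$ enters in an essential way: it is precisely what makes $-1+l+\beta>0$.

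Part (2) is a direct scaling identity. Writing $f\colon B_r^n\to\mathbb{R}$ for the parameterizing function of $\mathcal{C}[\sigma]$ at $\mathbf{x}_0$, the dilation invariance $\rho\,\mathcal{C}[\sigma]=\mathcal{C}[\sigma]$ shows that $\tilde f(y)=\rho f(y/\rho)$, defined on $B_{\rho r}^n$, parameterizes $\mathcal{C}[\sigma]$ as an $\mathbf{e}$-graph at $\rho\mathbf{x}_0$ on scale $\rho r$. A chain rule calculation gives $\nabla^j\tilde f(y)=\rho^{1-j}\nabla^j f(y/\rho)$, whence $(\rho r)^{-1+j}\|\nabla^j\tilde f\|_0=r^{-1+j}\|\nabla^j f\|_0$ and analogously $(\rho r)^{-1+l+\beta}[\nabla^l\tilde f]_\beta=r^{-1+l+\beta}[\nabla^l f]_\beta$, so the size functional is invariant. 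Part (3) I will deduce by applying (1) to $\mathcal{C}[\sigma]$ at each $p\in\sigma$ to obtain pointwise radii $r_p$, then using continuity of the graph parameterizations in the base point to find a neighborhood $U_p\subset\sigma$ on which the property holds uniformly (with the scale halved to $r_p/2$, say) at every $q\in U_p$; compactness of $\sigma$ then extracts a finite subcover and produces a uniform $r$.

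For part (4), the $C^{l,\beta}(\mathbb{S}^n)$-convergence $\sigma_i\to\sigma$ upgrades to $C^{l,\beta}_{loc}$-convergence $\mathcal{C}[\sigma_i]\to\mathcal{C}[\sigma]$ on $\mathbb{R}^{n+1}\setminus\{\mathbf{0}\}$, and forces $\mathbf{n}_{\mathcal{C}[\sigma_i]}(p_i)\to \mathbf{n}_{\mathcal{C}[\sigma]}(p)$. Starting from the graph $f$ for $\mathcal{C}[\sigma]$ over $B_{2r}^n\subset P_{\mathbf{n}_{\mathcal{C}[\sigma]}(p)}$ of size $<\delta$, I will represent $\mathcal{C}[\sigma_i]$, for $i$ large, as the graph of a function $f_i$ over $P_{\mathbf{n}_{\mathcal{C}[\sigma_i]}(p_i)}$ on $B_r^n$ centered at $p_i$, using the implicit function theorem to absorb the small rotation of the base plane and the translation from $p$ to $p_i$. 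Because $f_i$ converges in $C^{l,\beta}$ on the smaller ball to $f$ precomposed with the corresponding rotation, and the scale is halved from $2r$ to $r$ (which inflates each size term by at most a factor of $2$), the size of $f_i$ stays strictly below $2\delta$ for all sufficiently large $i$. The one step I expect to require the most care is part (1), where the orders of vanishing of $f$ and its derivatives have to be book-kept against the weights $r^{-1+j}$ and $r^{-1+l+\beta}$; parts (2)--(4) are then formal consequences of scaling, compactness, and continuity.
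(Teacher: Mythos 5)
The paper states Proposition \ref{SumProp} without proof, treating all four items as elementary consequences of the definition, so there is no argument of the authors' to compare yours against; your write-up simply supplies the standard proof that they omit. Your book-keeping is correct: in (1) the normalization $f(0)=0$, $\nabla f(0)=0$ makes every term of the size functional $O(r)$ or better, and $l\geq 2$ is indeed what makes the exponent $-1+l+\beta$ on the H\"{o}lder term positive; the scaling identity in (2) and the compactness/continuity arguments in (3) and (4) are sound. Two small points that a careful version should make explicit. First, the definition of an $\mathbf{e}$-graph requires that $\Sigma\cap C_{\mathbf{e}}(\mathbf{x}_0,r,\delta r)$ \emph{equal} the graph, i.e., that no other sheet of the hypersurface enters the cylinder; in (1) this follows from proper embeddedness for $r$ small, and in (4) from the $C^0$ closeness of $\mathcal{C}[\sigma_i]$ to $\mathcal{C}[\sigma]$ on a compact annulus containing the cylinder. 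Second, in (3) the conclusion asks for size $\delta$ (not $2\delta$) at every base point, so you should run (1) at each $p$ with a margin, say with $\delta/2$ in place of $\delta$, so that the tilt of the reference plane from $\mathbf{n}_{\mathcal{C}[\sigma]}(p)$ to $\mathbf{n}_{\mathcal{C}[\sigma]}(q)$ and the halving of the scale still leave the size strictly below $\delta$. Neither point affects the validity of the approach.
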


The pseudo-locality property of mean curvature flow gives certain asymptotic regularity for self-expanders that are weakly asymptotic to a cone. 

\begin{prop} \label{AsympRegProp}
Let $l\geq 2$ be an integer and $\beta\in [0,1)$. For each $\delta>0$ and $r>0$ there exist constants $\mathcal{R},M,\gamma,\eta>0$, depending only $n,l,\beta,\delta$ and $r$, so that if $\Sigma$ is a self-expander in $\mathbb{R}^{n+1}$ with 
$$
\lim_{\rho\to 0^+} \mathcal{H}^n\lfloor (\rho\Sigma)=\mathcal{H}^n\lfloor\mathcal{C}[\sigma]
$$
for $\sigma$ a $C^{l,\beta}$-hypersurface in $\mathbb{S}^n$, and $\sigma$ is an $\mathbf{n}_{\mathcal{C}[\sigma]}(p)$-graph of size $\delta$ on scale $r$ at every $p\in\sigma$, then 
\begin{enumerate}
\item \label{LocGraphItem} $\Sigma$ is a $C^{l,\beta}$ $\mathbf{n}_{\mathcal{C}[\sigma]}(p)$-graph of size $\gamma$ on scale $\eta |\mathbf{x}(p)|$ at every $p\in\mathcal{C}[\sigma]\setminus\bar{B}_{\mathcal{R}}$.
\item \label{LinearDecayItem} There is a function $f\colon\mathcal{C}(\Sigma)\setminus\bar{B}_{\mathcal{R}}\to\mathbb{R}$ satisfying 
$$
|f(p)|+|\nabla_{\mathcal{C}[\sigma]} f(p)| \leq M|\mathbf{x}(p)|^{-1}
$$
and so 
$$
\Sigma\setminus \bar{B}_{2\mathcal{R}}=\set{\mathbf{x}(p)+f(p)\mathbf{n}_{\mathcal{C}[\sigma]}(p)\colon p\in\mathcal{C}[\sigma]\setminus\bar{B}_{\mathcal{R}}}\setminus\bar{B}_{2\mathcal{R}}.
$$
\end{enumerate}
\end{prop}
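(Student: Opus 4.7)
\emph{Approach.} The plan is to apply the pseudo-locality property of mean curvature flow to the self-similar MCF $\{\sqrt{t}\,\Sigma\}_{t>0}$---which converges to the cone $\mathcal{C}[\sigma]$ as Radon measures as $t\to 0^+$ by hypothesis---and then to upgrade the resulting graphical control to higher regularity and quantitative decay using the quasilinear elliptic structure of the self-expander equation \eqref{ExpanderEqn}.

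\emph{Proof of \eqref{LocGraphItem}.} Fix $p\in\mathcal{C}[\sigma]$ with $|p|=R$ large. By dilation invariance (Proposition~\ref{SumProp}\eqref{Sum2Item}) and the uniform graphicality of $\sigma$, $\mathcal{C}[\sigma]$ is an $\mathbf{n}_{\mathcal{C}[\sigma]}(p)$-graph of size $\delta$ on scale $Rr$ at $p$. Standard pseudo-locality for MCF (as in Ilmanen, Ecker-Huisken, or Chen-Yin) then supplies universal constants $\epsilon_0,\gamma_0>0$ so that $\Sigma_t=\sqrt{t}\,\Sigma$ remains graphical of size $\gamma_0$ on scale $Rr/2$ at $p$, with uniform $C^{2}$ curvature bounds, for all $t\in(0,\epsilon_0(Rr)^2]$. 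Choosing $\mathcal{R}$ so that $1\leq \epsilon_0(\mathcal{R}r)^2$ forces $t=1$ into this window whenever $R\geq\mathcal{R}$, and hence $\Sigma=\Sigma_1$ is an $\mathbf{n}_{\mathcal{C}[\sigma]}(p)$-graph of size $\gamma_0$ on some scale $\eta|p|$ with $\eta$ comparable to $r$. Because \eqref{ExpanderEqn} is quasilinear elliptic in the graph function, Schauder estimates at the local scale $\eta R$, using the $C^{l,\beta}$ regularity of the cone through the PDE coefficients, bootstrap this to $C^{l,\beta}$ size $\gamma$; enlarging $\mathcal{R}$ drives $\gamma$ to any prescribed value.

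\emph{Proof of \eqref{LinearDecayItem}.} Patching the local graphs from \eqref{LocGraphItem} produces a single normal graph function $f$ on $\mathcal{C}[\sigma]\setminus\bar B_{\mathcal{R}}$. Since $\mathbf{x}$ is tangent to $\mathcal{C}[\sigma]$, one has $\mathbf{x}^\perp_{\mathcal{C}[\sigma]}\equiv 0$, so writing \eqref{ExpanderEqn} in normal-graph form over $\mathcal{C}[\sigma]$ yields a quasilinear elliptic equation of schematic form
$$
\Delta_{\mathcal{C}[\sigma]} f + |A_{\mathcal{C}[\sigma]}|^2 f - \tfrac{1}{2}\mathbf{x}\cdot\nabla_{\mathcal{C}[\sigma]} f - \tfrac{1}{2}f = -H_{\mathcal{C}[\sigma]} + Q(f,\nabla f,\nabla^2 f),
$$
where $Q$ is quadratic in its arguments and $H_{\mathcal{C}[\sigma]}(\mathbf{x})=H_\sigma(\mathbf{x}/|\mathbf{x}|)/|\mathbf{x}|=O(|\mathbf{x}|^{-1})$. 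The drift operator $-\tfrac{1}{2}\mathbf{x}\cdot\nabla-\tfrac{1}{2}$ annihilates homogeneous degree $-1$ functions, so $|\mathbf{x}|^{-1}$ is the critical decay rate matching the forcing. Barriers of the form $\pm M|\mathbf{x}|^{-1}$ (modified by an angular correction to dominate the $|A_{\mathcal{C}[\sigma]}|^2f$ term), together with the smallness of $f$ from \eqref{LocGraphItem}, yield $|f|\leq M|\mathbf{x}|^{-1}$; the bound $|\nabla f|\leq M|\mathbf{x}|^{-1}$ then follows from interior Schauder at scale $\eta|\mathbf{x}|$.

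\emph{Main obstacle.} The principal difficulty is Step 1: extracting pseudo-locality conclusions with constants depending only on the allowed data $(n,l,\beta,\delta,r)$ from the weak varifold convergence $\rho\Sigma\to\mathcal{C}[\sigma]$, which must be built into the hypotheses of the pseudo-locality lemma, and then upgrading the $C^{2}$ curvature control to the sharp $C^{l,\beta}$ graph size $\gamma$ by a careful Schauder bootstrap. The linear decay is similarly delicate because the critical exponent of the drift operator coincides with the decay rate of the forcing $H_{\mathcal{C}[\sigma]}$, so a naive comparison with $M|\mathbf{x}|^{-1}$ does not suffice on its own; either a carefully chosen angular barrier or a blow-down compactness argument exploiting \eqref{LocGraphItem} and uniqueness of the asymptotic cone is required.
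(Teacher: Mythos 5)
Your overall strategy for Item \eqref{LocGraphItem} is the same as the paper's: both apply pseudo-locality to the Brakke flow $\{\sqrt{t}\,\Sigma\}_{t>0}$ extended by $\mathcal{C}[\sigma]$ at $t=0$, the only cosmetic difference being that the paper rescales to the link and works at small times $t\in[0,\tau]$ while you work directly at a far-out point at time $1$; these are equivalent by parabolic rescaling. The one divergence is the bootstrap: the paper upgrades the Lipschitz graphicality via \emph{parabolic} H\"older and Schauder estimates for the graphical MCF equation up to the initial time (which is also exactly where the $C^{l,\beta}$ regularity of $\sigma$ enters, through the initial data), whereas you propose an elliptic Schauder bootstrap of the self-expander equation at time $1$. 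The elliptic route is viable since $\Sigma$ is smooth, but you should note that after rescaling $B_{\eta R}(p)$ to unit size the drift term $\tfrac12\mathbf{x}^\perp$ contributes an inhomogeneity that is only controlled once one knows $|\mathbf{x}\cdot\mathbf{n}_\Sigma|=2|H_\Sigma|$ is small there, which requires first extracting the curvature bound from pseudo-locality.

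For Item \eqref{LinearDecayItem} you take a genuinely different route. The paper gets the decay almost for free from the parabolic estimate $|\psi_p(t,x)|\le \tilde{C}(|x|^2+t)$ (the flow leaves its conical initial condition at speed $|H|=O(1)$, so at time $t=R^{-2}$ the rescaled surface is within $O(t)$ of the cone, which unrescales to $O(R^{-1})$), plus a comparison with shrinking spheres to ensure $\Sigma\setminus\bar{B}_{2\mathcal{R}}$ lies entirely in the cylindrical neighborhood of the cone. Your elliptic barrier argument can be made to work, but two points need repair. First, before ``patching the local graphs'' you must rule out components of $\Sigma\setminus\bar{B}_{2\mathcal{R}}$ lying far from $\mathcal{C}[\sigma]$; Item \eqref{LocGraphItem} only controls $\Sigma$ inside cylinders centered on the cone, and the paper closes this gap with the sphere comparison. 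Second, your linearized operator has the wrong sign on the drift: linearizing $H-\tfrac12\mathbf{x}\cdot\mathbf{n}$ over the cone gives $\Delta_{\mathcal{C}}f+|A_{\mathcal{C}}|^2f+\tfrac12\mathbf{x}\cdot\nabla_{\mathcal{C}}f-\tfrac12 f$, and this operator sends $|\mathbf{x}|^{-1}$ to $-|\mathbf{x}|^{-1}+O(|\mathbf{x}|^{-3})$ rather than annihilating it. This actually works in your favor: the barrier $M|\mathbf{x}|^{-1}$ strictly dominates the $O(|\mathbf{x}|^{-1})$ forcing from $H_{\mathcal{C}[\sigma]}$, so the ``critical resonance'' you worry about in your final paragraph is an artifact of the sign error and the naive comparison does suffice (given the zeroth-order coefficient $|A_{\mathcal{C}}|^2-\tfrac12<0$ for $|\mathbf{x}|$ large, which lets the maximum principle run on the exterior region once $f\to 0$ at infinity is known).
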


\begin{proof}
For simplicity, we set $\mathcal{C}=\mathcal{C}[\sigma]$. Consider the mean curvature flow (thought of as a space-time track)
$$
\mathcal{S}=\bigcup_{t>0} \left(\sqrt{t}\, \Sigma\right)\times\{t\}.
$$
Let $\bar{\mathcal{S}}=\mathcal{S}\cup(\mathcal{C}\times\{0\})$ so, by our hypothesis, $\bar{\mathcal{S}}$ is an integer Brakke flow (cf. \cite[\S 2]{IlmanenSing} and \cite[\S 6]{IlmanenElliptic}).
Denote by $\bar{\mathcal{S}}_t$ the time $t$ slice of $\bar{\mathcal{S}}$. For Item \eqref{LocGraphItem}, it is sufficient to prove that there are constants $\tau,\gamma,\eta>0$, depending only on $n,l,\beta,\delta$ and $r$, so that for all $t\in [0,\tau]$, $\bar{\mathcal{S}}_t$ is a $C^{l,\beta}$ $\mathbf{n}_{\mathcal{C}}(p)$-graph of size $\gamma$ on scale $\eta$ at every $p\in\sigma$.

By the pseudo-locality property for mean curvature flow (cf. \cite[Theorem 1.5 and Remarks 1.6]{IlmanenNevesSchulze}) there is an $\epsilon\in (0,1)$, depending on $n,\delta$ and $r$, so that for every $t\in [0,16\epsilon^2)$ and $p\in\sigma$, $\bar{\mathcal{S}}_t\cap C_{\mathbf{n}_{\mathcal{C}}(p)}(p,4\epsilon,4\epsilon)$ is the graph of a function $\psi_p(t,x)$ over $T_p\mathcal{C}$ with
$$
(4\epsilon)^{-1}\Vert\psi_p(t,\cdot)\Vert_0+\Vert D_x\psi_p(t,\cdot)\Vert_0\leq 1.
$$
Moreover, as $\psi_p(t,x)$ satisfies
$$
\frac{\partial\psi_p}{\partial t}=\sqrt{1+|D_x\psi_p|^2}\, \mathrm{div}\left(\frac{D_x\psi_p}{\sqrt{1+|D_x\psi_p|^2}}\right),
$$
it follows from the H\"{o}lder estimate for quasi-parabolic equations (cf. \cite[Theorem 1.1 of Chapter 6]{LSU}) that for every $\alpha^\prime\in (0,1)$,
$$
\sup_{t\in [0,4\epsilon^2]} [D_x\psi_p(t,\cdot)]_{\alpha^\prime; B^n_{2\epsilon}}+\sup_{x\in B^n_{2\epsilon}} [D_x\psi_p(\cdot,x)]_{\frac{\alpha^\prime}{2}; [0,4\epsilon^2]} \leq C(n,\alpha^\prime,\epsilon).
$$
Furthermore, we appeal to the estimates of fundamental solutions and the Schauder theory (cf. \cite[(13.1) and Theorem 5.1]{LSU}) to get that
\begin{equation} \label{HolderEstEqn}
\sup_{t\in [0,\epsilon^2]}\Vert \psi_p(t,\cdot)\Vert_{l,\beta; B^n_\epsilon}\leq C^\prime (n,l,\beta,\epsilon).
\end{equation}

Using the equation of $\psi_p$ and the fact that $\psi_p(0,0)=|D_x\psi_p(0,0)|=0$, it follows from \eqref{HolderEstEqn} that
\begin{equation} \label{C1EstEqn}
|\psi_p(t,x)| \leq \tilde{C} \left(|x|^2+t\right) \mbox{ and } |D_x\psi_p(t,x)| \leq \tilde{C}\left(|x|+\sqrt{t}\right),
\end{equation}
where $\tilde{C}=\tilde{C}(n,C,C^\prime)>C^\prime$. In particular, for $\rho\in (0,1)$ and for every $t\in [0,\rho^2\epsilon^2]$,
$$
(\rho\epsilon)^{-1} \Vert\psi_{p}(t,\cdot)\Vert_{0; B^n_{\rho\epsilon}}+\Vert D_x\psi_p(t,\cdot)\Vert_{0; B^n_{\rho\epsilon}} \leq 4\tilde{C}\rho\epsilon.
$$
This together with \eqref{HolderEstEqn} further gives
$$
\sum_{j=0}^l (\rho\epsilon)^{j-1}\Vert D_x^j \psi_p(t,\cdot)\Vert_{0;B^n_{\rho\epsilon}}+(\rho\epsilon)^{l+\beta-1} [D_x^l \psi_p(t,\cdot)]_{\beta; B^n_{\rho\epsilon}} \leq 5\tilde{C}\rho\epsilon.
$$
Now we choose $\rho=(5\tilde{C}\epsilon)^{-1/2}$ so $5\tilde{C}\rho\epsilon<4\rho^{-1}$. Hence, for each $t\in [0,\rho^2\epsilon^2]$, $\bar{\mathcal{S}}_t$ is a $C^{l,\beta}$ $\mathbf{n}_{\mathcal{C}}(p)$-graph of size $4\rho^{-1}$ on scale $\rho\epsilon$ at every $p\in\sigma$. The claim follows immediately with $\tau=\rho^2\epsilon^2,\gamma=4\rho^{-1}$ and $\eta=\rho\epsilon$.

As $\bar{\mathcal{S}}$ is a mean curvature flow away from $(\mathbf{0},0)$, by comparing with shrinking spheres, one observes that
$$
\Sigma\setminus\bar{B}_{2R} \subset \bigcup_{p\in\mathcal{C}\setminus\bar{B}_R} C_{\mathbf{n}_{\mathcal{C}}(p)} (p,\eta |\mathbf{x}(p)|, \gamma\eta |\mathbf{x}(p)|)
$$
for some $R>\tau^{-1/2}$ depending on $n,l,\beta,\delta$ and $r$. Thus, invoking estimate \eqref{C1EstEqn} gives that for every $q\in\Sigma\setminus\bar{B}_{2R}$, 
$$
|\mathbf{x}(q)-\mathbf{x}(\pi(q))|+|\mathbf{n}_\Sigma(q)-\mathbf{n}_{\mathcal{C}}(\pi(q))| \leq \hat{C}(n,\tilde{C}) |\mathbf{x}(q)|^{-1}.
$$
Here $\pi$ is the nearest point projection from $\Sigma\setminus\bar{B}_{2R}$ onto $\mathcal{C}$. Hence, Item \eqref{LinearDecayItem} follows easily from this estimate and the implicit function theorem.
\end{proof}

\begin{cor} \label{EndCompactCor}
If $\Sigma_i\in\mathcal{ACH}^{k,\alpha}_n$ are self-expanders and there is a $C^{k,\alpha}$-hypersurface in $\mathbb{S}^n$, $\sigma$, so $\mathcal{L}(\Sigma_i)\to\sigma$ in $C^{2}(\mathbb{S}^n)$, then there is an $\mathcal{R}^\prime=\mathcal{R}^\prime(n,k,\alpha,\sigma)>0$ and a $C^{k,\alpha}_*$-asymptotically conical self-expanding end $\Sigma$ in $\mathbb{R}^{n+1}\setminus\bar{B}_{\mathcal{R}^\prime}$ with $\mathcal{L}(\Sigma)=\sigma$ so that, up to passing to a subsequence, 
$$
\Sigma_i\to\Sigma \mbox{ in $C^{\infty}_{loc}(\mathbb{R}^{n+1}\setminus\bar{B}_{\mathcal{R}^\prime})$}.
$$
\end{cor}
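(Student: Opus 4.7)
The idea is to apply Proposition \ref{AsympRegProp} to each $\Sigma_i$ with parameters that are uniform in $i$ (secured by the $C^2$ convergence of the links via Proposition \ref{SumProp}), reparameterize as a graph over the common cone $\mathcal{C}[\sigma]$, and then extract a convergent subsequence using Arzel\`a--Ascoli together with elliptic bootstrapping for the quasilinear self-expander equation.

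First, since $\sigma$ is a $C^{k,\alpha}$-hypersurface in $\mathbb{S}^n$, Proposition \ref{SumProp}\eqref{Sum3Item} yields some $r_0=r_0(\sigma)>0$ so that $\sigma$ is an $\mathbf{n}_{\mathcal{C}[\sigma]}(p)$-graph of size $\delta_0$ on scale $2r_0$ at every $p\in\sigma$, where $\delta_0>0$ is chosen small enough (depending on $n$ only) that Proposition \ref{AsympRegProp} applies with size $2\delta_0$ and scale $r_0$. The assumption $\mathcal{L}(\Sigma_i)\to\sigma$ in $C^2(\mathbb{S}^n)$ together with Proposition \ref{SumProp}\eqref{Sum4Item} then forces each $\mathcal{L}(\Sigma_i)$ to be an $\mathbf{n}_{\mathcal{C}[\mathcal{L}(\Sigma_i)]}$-graph of size $2\delta_0$ on scale $r_0$ at every point once $i$ is large. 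Because $\Sigma_i\in\mathcal{ACH}^{k,\alpha}_n$ obeys $\rho\Sigma_i\to\mathcal{C}[\mathcal{L}(\Sigma_i)]$ as $\rho\to 0^+$ (so the corresponding $\mathcal{H}^n$ measures converge), Proposition \ref{AsympRegProp} then furnishes constants $\mathcal{R}_1,M,\gamma,\eta$ that depend only on $n,k,\alpha,\sigma$ and functions $f_i$ on $\mathcal{C}[\mathcal{L}(\Sigma_i)]\setminus\bar{B}_{\mathcal{R}_1}$ satisfying $|f_i|+|\nabla f_i|\leq M|\mathbf{x}|^{-1}$, with $\Sigma_i\setminus\bar{B}_{2\mathcal{R}_1}$ the corresponding $\mathbf{n}_{\mathcal{C}[\mathcal{L}(\Sigma_i)]}$-graph.

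Next, the $C^2$ convergence of the links implies that, for large $i$, each $\mathcal{C}[\mathcal{L}(\Sigma_i)]$ is itself an $\mathbf{n}_{\mathcal{C}[\sigma]}$-graph over $\mathcal{C}[\sigma]$ with $C^2$-small defining function on every compact subset of $\mathbb{R}^{n+1}\setminus\{\mathbf{0}\}$, uniformly in $i$ by dilation invariance (Proposition \ref{SumProp}\eqref{Sum2Item}). Fixing $\mathcal{R}^\prime>2\mathcal{R}_1$ large enough that the implicit function theorem closes, I compose the two graph representations to write, for $i$ large,
\[
\Sigma_i\setminus\bar{B}_{\mathcal{R}^\prime} = \{\mathbf{x}(p)+u_i(p)\,\mathbf{n}_{\mathcal{C}[\sigma]}(p) : p\in\mathcal{C}[\sigma]\setminus\bar{B}_{\mathcal{R}^{\prime\prime}}\}\setminus\bar{B}_{\mathcal{R}^\prime},
\]
with $u_i$ uniformly $C^1$-bounded on compact subsets and $|u_i|+|\nabla u_i|=O(|\mathbf{x}|^{-1})$. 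Writing \eqref{ExpanderEqn} for a graph over $\mathcal{C}[\sigma]$ gives a quasilinear elliptic PDE for $u_i$ with smooth coefficients, so standard interior Schauder bootstrapping upgrades the uniform $C^1$ bound to uniform $C^l$ bounds on compact subsets for every $l\geq 2$.

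Arzel\`a--Ascoli and a diagonal argument then extract a subsequence with $u_i\to u$ in $C^\infty_{loc}(\mathcal{C}[\sigma]\setminus\bar{B}_{\mathcal{R}^{\prime\prime}})$. The limit $u$ solves the same PDE, so its graph $\Sigma$ is a smooth self-expanding end; the decay $|u|+|\nabla u|=O(|\mathbf{x}|^{-1})$ passes to the limit and the higher-order estimates from Proposition \ref{AsympRegProp}\eqref{LocGraphItem} inherited by $\Sigma$ place it in the appropriate weighted H\"older space, so $\Sigma$ is $C^{k,\alpha}_*$-asymptotically conical with $\mathcal{L}(\Sigma)=\sigma$, and $\Sigma_i\to\Sigma$ in $C^\infty_{loc}(\mathbb{R}^{n+1}\setminus\bar{B}_{\mathcal{R}^\prime})$. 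The main obstacle is the second step: switching from the graph representations over the varying cones $\mathcal{C}[\mathcal{L}(\Sigma_i)]$ to graphs over the fixed cone $\mathcal{C}[\sigma]$ while retaining uniform decay at infinity. This is an implicit function theorem argument at every scale simultaneously, but the combination of the dilation invariance in Proposition \ref{SumProp}\eqref{Sum2Item} with the scale-invariant estimates from Proposition \ref{AsympRegProp} is exactly what makes it close uniformly.
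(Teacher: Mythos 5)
Your proposal is correct and follows essentially the same route as the paper: Proposition \ref{SumProp}\eqref{Sum3Item}--\eqref{Sum4Item} to get uniform graphical control of the links, Proposition \ref{AsympRegProp} with constants uniform in $i$, Arzel\`a--Ascoli plus interior Schauder estimates for the graphical self-expander equation to extract the limit, and then the scale-invariant estimates of Proposition \ref{AsympRegProp}\eqref{LocGraphItem} combined with the $O(|\mathbf{x}|^{-1})$ decay from \eqref{LinearDecayItem} to conclude the limit end is $C^{k,\alpha}_*$-asymptotic to $\mathcal{C}[\sigma]$. The only (harmless) difference is that you make explicit the implicit-function-theorem step converting graphs over the varying cones $\mathcal{C}[\mathcal{L}(\Sigma_i)]$ into graphs over the fixed cone $\mathcal{C}[\sigma]$, which the paper leaves implicit in the phrase ``uniform graphical estimates.''
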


\begin{proof}
By Items \eqref{Sum3Item} and \eqref{Sum4Item} of Proposition \ref{SumProp}, there are $\delta, r>0$ and $i_0$ so that for $i\geq i_0$ each $\mathcal{L}(\Sigma_i)$ is an $\mathbf{n}_{\mathcal{C}[\sigma]}(p)$-graph of size $\delta$ on scale $r$ for all $p\in \mathcal{L}(\Sigma_i)$. In particular, we may apply Item \eqref{LinearDecayItem} of Proposition \ref{AsympRegProp} using these constants to obtain an $\mathcal{R}$ so that one has uniform graphical estimates for the $\Sigma_i$ in $\Real^{n+1}\setminus \bar{B}_{\mathcal{R}}$. It then follows from the Arzel\`{a}-Ascoli theorem and standard elliptic estimates (see \cite[Theorems 6.17 and 8.24]{GT}) that there is a self-expanding end, $\Sigma$, in $\Real^{n+1}\setminus \bar{B}_{\mathcal{R}}$ so that up to passing to a subsequence $\Sigma_i\to \Sigma$ in $C^\infty_{loc}(\Real^{n+1}\setminus \bar{B}_{\mathcal{R}})$.  In fact, by Item \eqref{LinearDecayItem} of Proposition \ref{AsympRegProp} applied to the $\Sigma_i$ together with the Arzel\`{a}-Ascoli theorem, $\Sigma$ is $C^{0,1}_*$-asymptotic to $\mathcal{C}[\sigma]$. Combining this fact with Item \eqref{LocGraphItem} of Proposition \ref{AsympRegProp} applied to $\Sigma$ gives that $\Sigma$ is actually $C^{k, \alpha}_*$-asymptotic to $\mathcal{C}[\sigma]$ which completes the proof.
\end{proof}

\subsection{Entropy and smooth compactness of $\mathcal{E}_{n, \mathrm{ent}}^{k,\alpha}(\Lambda_0)$} \label{EntropySec}
We recall the notion of entropy introduced by Colding and Minicozzi \cite{CMGenMCF} and use it to prove Item \eqref{EntropyCpctItem} of Theorem \ref{SmoothCpctThm} as well as introduce several auxiliary results needed in other parts of the article.  

First of all, for a hypersurface $\Sigma\subset\mathbb{R}^{n+1}$ the \emph{Gaussian surface area} of $\Sigma$ is 
$$
F[\Sigma]=(4\pi)^{-\frac{n}{2}} \int_\Sigma e^{-\frac{|\mathbf{x}|^2}{4}}\, d\mathcal{H}^n.
$$
Colding and Minicozzi \cite{CMGenMCF} introduced the following notion of \emph{entropy of a hypersurface}:
$$
\lambda[\Sigma]=\sup_{\rho>0,\mathbf{y}\in\mathbb{R}^{n+1}} F[\rho\Sigma+\mathbf{y}].
$$
By identifying $\Sigma$ with $\mathcal{H}^n\lfloor\Sigma$, one extends $F$ and $\lambda$ in an obvious manner to Radon measures on $\mathbb{R}^{n+1}$.  We first record a number of simple observations about the entropy of asymptotically conical self-expanders.

\begin{lem}\label{ExpanderConeEntropyLem}
If $\Sigma\in \mathcal{ACH}^{k,\alpha}_n$ is a self-expander, then
$$
\lambda[\Sigma]=\lambda[\mathcal{C}(\Sigma)].
$$
\end{lem}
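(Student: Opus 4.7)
The plan is to prove the identity by establishing both inequalities, using throughout that $\{M_t\}_{t\geq 0}=\{\sqrt{t}\,\Sigma\}_{t\geq 0}$, extended by $M_0=\mathcal{C}(\Sigma)$, is an integral Brakke flow emerging from the cone. This is essentially the content of the self-expander equation; the asymptotically conical hypothesis, together with the convergence $\sqrt{t}\,\Sigma\to\mathcal{C}(\Sigma)$ noted in the proof of Proposition \ref{AsympRegProp}, ensures that the extension to $t=0$ is genuine.

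For the upper bound $\lambda[\Sigma]\leq \lambda[\mathcal{C}(\Sigma)]$, I would apply Huisken's monotonicity formula to this Brakke flow. Fix $\rho>0$ and $\mathbf{y}_0\in\mathbb{R}^{n+1}$ and base the backward heat kernel at the spacetime point $(\mathbf{y}_0,\rho^2+1)$. Comparing the slices at $t=\rho^2$ and $t=0$, the former produces $F[\rho\Sigma-\mathbf{y}_0]$ directly, while the latter, after the substitution $\mathbf{u}=(1+\rho^2)^{-1/2}\mathbf{x}$ combined with the dilation invariance of $\mathcal{C}(\Sigma)$, becomes $F[\mathcal{C}(\Sigma)-(1+\rho^2)^{-1/2}\mathbf{y}_0]$. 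Taking the supremum over $\mathbf{y}_0$ (which absorbs the factor $(1+\rho^2)^{-1/2}$) and then over $\rho>0$ gives the claim, noting additionally that $F$ is dilation-invariant on cones so that $\lambda[\mathcal{C}(\Sigma)]=\sup_\mathbf{y}F[\mathcal{C}(\Sigma)+\mathbf{y}]$.

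For the reverse inequality, I would use the scale invariance $\lambda[\rho\Sigma]=\lambda[\Sigma]$ for every $\rho>0$, together with the weak convergence of Radon measures $\mathcal{H}^n\lfloor(\rho\Sigma)\to\mathcal{H}^n\lfloor\mathcal{C}(\Sigma)$ as $\rho\to 0^+$. Combined with the lower semi-continuity of $\lambda$ under such convergence, this gives $\lambda[\mathcal{C}(\Sigma)]\leq \liminf_{\rho\to 0^+}\lambda[\rho\Sigma]=\lambda[\Sigma]$.

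The main obstacle is setting up the analytic framework cleanly: verifying that $\{\sqrt{t}\,\Sigma\}_{t\geq 0}$ is a genuine integral Brakke flow to which Huisken's monotonicity applies, and establishing the lower semi-continuity of $\lambda$ in the sense needed. Both rely on the uniform decay and quadratic area growth guaranteed by $\Sigma\in\mathcal{ACH}_n^{k,\alpha}$, which ensure that the Gaussian-weighted integrals converge uniformly in $\rho$ and $\mathbf{y}$; once these points are in place, the argument reduces to the short calculations sketched above.
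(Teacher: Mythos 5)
Your proposal is correct and follows essentially the same route as the paper: the lower bound $\lambda[\Sigma]\geq\lambda[\mathcal{C}(\Sigma)]$ via scale invariance, the convergence $\mathcal{H}^n\lfloor(\rho\Sigma)\to\mathcal{H}^n\lfloor\mathcal{C}(\Sigma)$ and lower semicontinuity of entropy, and the upper bound via Huisken's monotonicity applied to the integral Brakke flow $\{\sqrt{t}\,\Sigma\}_{t>0}$ extended by $\mathcal{C}(\Sigma)$ at $t=0$. Your explicit computation with the backward heat kernel based at $(\mathbf{y}_0,\rho^2+1)$ is exactly the calculation the paper leaves implicit, and it is carried out correctly.
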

\begin{proof}
On the one hand, for $\Sigma\in\mathcal{ACH}^{k,\alpha}_n$,
$$
\lim_{\rho\to 0^+} \rho\Sigma=\mathcal{C}(\Sigma) \mbox{ in $C^{k}_{loc}(\mathbb{R}^{n+1}\setminus\set{\mathbf{0}})$}.
$$
In particular,
$$
\lim_{\rho\to 0^+}\mathcal{H}^n\lfloor (\rho\Sigma)=\mathcal{H}^n\lfloor\mathcal{C}(\Sigma).
$$
By the lower semicontinuity and scaling invariance of entropy
$$
\lambda[\Sigma]\geq\lambda[\mathcal{C}(\Sigma)].
$$

On the other hand, we define $\mathcal{M}=\{\mu_t\}_{t\ge 0}$ to be a family of Radon measures on $\mathbb{R}^{n+1}$ given by
$$
\mu_t=\left\{
\begin{array}{cc}
\mathcal{H}^n\lfloor (\sqrt{t}\, \Sigma) & \mbox{if $t>0$}  \\
\mathcal{H}^n\lfloor \mathcal{C}(\Sigma) & \mbox{if $t=0$},
\end{array}
\right.
$$
so $\mathcal{M}$ is an integer Brakke flow (see \cite[\S 2]{IlmanenSing} and \cite[\S 6]{IlmanenElliptic}). Thus, the Huisken monotonicity formula \cite{Huisken} (see also \cite[Lemma 7]{IlmanenSing}) implies 
$$
\lambda[\Sigma] \leq \lambda[\mathcal{C}(\Sigma)],
$$
which finishes the proof.
\end{proof}

\begin{lem}\label{AreaRatioLem}
There is a constant $\tilde{M}=\tilde{M}(n)$ so that if $\Sigma\in\mathcal{ACH}^{k,\alpha}_n$ is a self-expander, then, for any $R>0$,
$$
\mathcal{H}^n(\Sigma\cap B_R) \leq \tilde{M} \lambda[\mathcal{C}(\Sigma)]R^{n} .
$$
\end{lem}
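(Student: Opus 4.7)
The plan is to reduce this to the well-known fact that entropy controls Euclidean density ratios, and then invoke the previous lemma to replace $\lambda[\Sigma]$ by $\lambda[\mathcal{C}(\Sigma)]$.

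First, I would rescale. Given $R > 0$, set $\Sigma^R = R^{-1} \Sigma$, so that $\Sigma^R \cap B_1 = R^{-1}(\Sigma \cap B_R)$ and consequently $\mathcal{H}^n(\Sigma^R \cap B_1) = R^{-n}\mathcal{H}^n(\Sigma \cap B_R)$. Next, I would bound the Gaussian surface area of $\Sigma^R$ from below by restricting the integral to $B_1$ and using the elementary pointwise bound $e^{-|\mathbf{x}|^2/4} \geq e^{-1/4}$ on that region. This yields
$$
F[\Sigma^R] \geq (4\pi)^{-n/2} e^{-1/4} \mathcal{H}^n(\Sigma^R \cap B_1) = (4\pi)^{-n/2} e^{-1/4} R^{-n} \mathcal{H}^n(\Sigma \cap B_R).
$$

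On the other hand, by the very definition of entropy, $F[\Sigma^R] \leq \lambda[\Sigma]$. Combining these two inequalities and rearranging gives
$$
\mathcal{H}^n(\Sigma \cap B_R) \leq (4\pi)^{n/2} e^{1/4}\, \lambda[\Sigma]\, R^n.
$$
Finally, I would appeal to Lemma \ref{ExpanderConeEntropyLem} to replace $\lambda[\Sigma]$ with $\lambda[\mathcal{C}(\Sigma)]$, producing the stated inequality with $\tilde{M}(n) = (4\pi)^{n/2} e^{1/4}$.

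There is no real obstacle here: the only ingredients are the definition of entropy (applied at the rescaled scale $\rho = 1/R$ and translation $\mathbf{y} = \mathbf{0}$) and the previously established identity $\lambda[\Sigma] = \lambda[\mathcal{C}(\Sigma)]$. The constant $\tilde{M}$ depends only on $n$ since it comes solely from the Gaussian weight on $B_1$. Note that this argument in fact gives the stronger statement that every hypersurface (not just self-expanders) satisfies $\mathcal{H}^n(\Sigma \cap B_R(\mathbf{x}_0)) \leq \tilde{M} \lambda[\Sigma] R^n$ for every center $\mathbf{x}_0$, obtained by the same computation applied to $R^{-1}(\Sigma - \mathbf{x}_0)$; the self-expander hypothesis only enters when we wish to express the right-hand side in terms of the asymptotic cone.
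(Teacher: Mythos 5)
Your argument is correct and is essentially identical to the paper's: rescale by $R^{-1}$, bound the localized Hausdorff measure by the Gaussian surface area (your explicit constant $(4\pi)^{n/2}e^{1/4}$ is just the paper's unnamed $\tilde M(n)$), bound that by $\lambda[\Sigma]$, and finish with Lemma \ref{ExpanderConeEntropyLem}. Nothing further to add.
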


\begin{proof}
One computes,
$$
R^{-n}\mathcal{H}^n (\Sigma \cap B_R)=\mathcal{H}^n\left((R^{-1} \Sigma)\cap B_1\right)\leq \tilde{M}(n) F[R^{-1} \Sigma]\leq \tilde{M}(n) \lambda[\Sigma]
$$
and so the claim follows from Lemma \ref{ExpanderConeEntropyLem}.
\end{proof}

\begin{lem}\label{EntropyUnifLem}
Fix any $\epsilon>0$. If $\mathcal{C}\subset\mathbb{R}^{n+1}$ is a $C^2$-regular cone and $\mathcal{L}[\mathcal{C}]$ is an $\mathbf{n}_{\mathcal{C}}(p)$-graph of size $\delta$ on scale $r$ at every $p\in\mathcal{L}[\mathcal{C}]$, then there is an $\tilde{\mathcal{R}}=\tilde{\mathcal{R}}(n,\epsilon,\delta,r)$ so that either $\lambda[\mathcal{C}]\leq 1+\epsilon$ or $\lambda[\mathcal{C}]=F[\mathcal{C}+\mathbf{x}_0]$ for some $\mathbf{x}_0\in \bar{B}_{\tilde{\mathcal{R}}}$.
\end{lem}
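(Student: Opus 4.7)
My approach is to first reduce the entropy of $\mathcal{C}$ to a sup over translations, then quantify that $F[\mathcal{C}+\mathbf{y}]$ tends to at most $1$ as $|\mathbf{y}|\to\infty$. Because $\mathcal{C}$ is a cone, $\rho\mathcal{C}=\mathcal{C}$ for every $\rho>0$, so $F[\rho\mathcal{C}+\mathbf{y}]=F[\mathcal{C}+\mathbf{y}]$ and hence
\[
\lambda[\mathcal{C}]=\sup_{\mathbf{y}\in\mathbb{R}^{n+1}}F[\mathcal{C}+\mathbf{y}].
\]
By dominated convergence (using the polynomial area bound $\mathcal{H}^n(\mathcal{C}\cap B_s)\leq C(n,\delta,r)s^n$, which the graphicality of $\mathcal{L}[\mathcal{C}]$ gives via the cone area formula), $\mathbf{y}\mapsto F[\mathcal{C}+\mathbf{y}]$ is continuous. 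The lemma therefore follows once I prove the tail bound: there is $\tilde{\mathcal{R}}=\tilde{\mathcal{R}}(n,\epsilon,\delta,r)$ such that $F[\mathcal{C}+\mathbf{y}]\leq 1+\epsilon$ for all $\mathbf{y}$ with $|\mathbf{y}|\geq \tilde{\mathcal{R}}$. Indeed, if no translate attains $F>1+\epsilon$, then $\lambda[\mathcal{C}]\leq 1+\epsilon$; otherwise the sup strictly exceeds $1+\epsilon$ and must be realized on $\bar{B}_{\tilde{\mathcal{R}}}$, where continuity plus compactness produces the required $\mathbf{x}_0$.

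For the tail bound, let $R=|\mathbf{y}|$, let $\mathbf{p}_0\in\mathcal{C}$ achieve $d=\mathrm{dist}(-\mathbf{y},\mathcal{C})$ (existence of $\mathbf{p}_0$ follows when $d<R$, which is the only case requiring discussion), and split into two regimes. If $d\geq c_0 R$ for a small constant $c_0=c_0(\epsilon,n)$, a dyadic decomposition of $\mathcal{C}$ into shells $\{|\mathbf{x}+\mathbf{y}|\in[k,k+1]\}$ together with the cone area growth yields $F[\mathcal{C}+\mathbf{y}]\leq C(n,\delta,r)R^n e^{-c_0^2 R^2/4}$, which is at most $\epsilon$ for $R$ large. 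If instead $d<c_0 R$, then $|\mathbf{p}_0|\geq(1-c_0)R$, so by Item \eqref{Sum2Item} of Proposition \ref{SumProp} the cone $\mathcal{C}$ is an $\mathbf{n}_\mathcal{C}(\mathbf{p}_0)$-graph of size $\delta$ on scale $(1-c_0)Rr$ at $\mathbf{p}_0$ over the tangent plane $T_{\mathbf{p}_0}\mathcal{C}$. Crucially, $T_{\mathbf{p}_0}\mathcal{C}$ passes through the origin, since the ray $\{t\mathbf{p}_0:t>0\}$ lies in $\mathcal{C}$ and hence is tangent to it at $\mathbf{p}_0$.

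The height function $f$ of this graph satisfies $f(\mathbf{0})=0$, $\nabla f(\mathbf{0})=0$, and $\|\nabla^2 f\|_0\leq\delta/((1-c_0)Rr)$. Taylor expansion gives $|\nabla f|$ and $|f|$ of size $O(K/R)$ on a ball of radius $K$ around $\mathbf{p}_0$, for any $K\leq (1-c_0)Rr$. Splitting the integral defining $F[\mathcal{C}+\mathbf{y}]$ at radius $K$ from $-\mathbf{y}$, the far part is bounded by the Gaussian tail $\lesssim K^{n-1}e^{-K^2/4}$, and the near part is, up to an $O(K^2/R^2)$ correction from the graphical deviation of the area element and exponent, bounded by $F[T_{\mathbf{p}_0}\mathcal{C}+\mathbf{y}]\leq 1$; the last inequality uses the standard identity $F[P+\mathbf{y}]=\exp(-\operatorname{dist}(\mathbf{y},P)^2/4)$ for any linear hyperplane $P$ through the origin. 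Choosing first $K=K(n,\epsilon)$ to absorb the tail and then $R\geq\tilde{\mathcal{R}}(n,\epsilon,\delta,r)$ to absorb the graphical correction yields $F[\mathcal{C}+\mathbf{y}]\leq 1+\epsilon$, completing the proof.

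The main obstacle is the quantitative local comparison in the second regime: one must show that the near-$\mathbf{p}_0$ portion of $\mathcal{C}$ is well approximated by its tangent plane on the Gaussian-relevant scale and convert this into a quantitative bound on the Gaussian surface area integral, uniformly in the cone (subject to the graphicality hypothesis) and the direction $\mathbf{e}=\mathbf{y}/R$. The uniformity is delivered by the scale-invariant nature of the graphicality recorded in Item \eqref{Sum2Item} of Proposition \ref{SumProp}, and the observation that tangent planes to smooth cones pass through the origin is what forces the comparison value to be at most $1$.
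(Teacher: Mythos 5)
Your proof is correct and follows the same overall strategy as the paper: reduce $\lambda[\mathcal{C}]$ to a supremum over translations via dilation invariance, establish the uniform area-ratio bound $\mathcal{H}^n(\mathcal{C}\cap B_s(\mathbf{y}))\leq A'(n,\delta,r)\,s^n$ from the graphicality of the link, and then show $F[\mathcal{C}+\mathbf{y}]\leq 1+\epsilon$ once $|\mathbf{y}|$ is large. The difference is one of completeness rather than of route: the paper asserts the tail bound as ``a consequence'' of the area bound alone, whereas you correctly recognize that the area bound by itself only yields a uniform bound $F[\mathcal{C}+\mathbf{y}]\leq C(n)A'$ and cannot force the value down to $1+\epsilon$; one genuinely needs the second regime of your argument, namely that near a point $\mathbf{p}_0\in\mathcal{C}$ with $|\mathbf{x}(\mathbf{p}_0)|$ large the cone is a single graph of small curvature over its tangent hyperplane, and that this hyperplane passes through the origin (since $\mathbf{x}(\mathbf{p}_0)\cdot\mathbf{n}_{\mathcal{C}}(\mathbf{p}_0)=0$), so the comparison value $F[P+\mathbf{y}]=e^{-\dist(\mathbf{y},P)^2/4}$ is at most $1$. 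You also supply the continuity of $\mathbf{y}\mapsto F[\mathcal{C}+\mathbf{y}]$ needed to attain the supremum on $\bar{B}_{\tilde{\mathcal{R}}}$, which the paper leaves implicit. Two small points to tidy: the threshold $c_0$ separating your two regimes must be taken small depending on $\delta$ and $r$ as well (so that $B_K(-\mathbf{y})\cap\mathcal{C}$ lies inside the graphical cylinder of radius and height comparable to $(1-c_0)R\min(r,\delta r)$), and the correction to the Gaussian exponent coming from the height $|f|=O(K^2/R)$ is of order $K^3/R$ rather than $K^2/R^2$; neither affects the conclusion since both are $o(1)$ as $R\to\infty$ for fixed $K$.
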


\begin{proof}
First observe, that as $\mathcal{C}$ is invariant by homotheties one has
$$
\lambda[\mathcal{C}]=\sup_{\mathbf{x}\in \Real^{n+1}} F[\mathcal{C}+\mathbf{x}].
$$
Next observe that an elementary covering argument gives an $A=A(n,\delta,r)$ so that
$$
\mathcal{H}^{n-1}(\mathcal{L}[\mathcal{C}])\leq A.
$$
Hence, there is an $A^\prime=A^\prime(n,\delta,r)>0$ so for all $R>0$ and $\mathbf{x}\in \Real^{n+1}$,
$$
\mathcal{H}^{n}(\mathcal{C}\cap B_R(\mathbf{x}))\leq A^\prime R^n.
$$
A consequence of this is that there is an $\tilde{\mathcal{R}}=\tilde{\mathcal{R}}(n, \epsilon, \delta, r)$ so that for all $\mathbf{x}\notin \bar{B}_{\tilde{\mathcal{R}}}$,
$$
F[\mathcal{C}+\mathbf{x}]\leq 1+\epsilon.
$$
The claim follows from this.
\end{proof}
 
\begin{lem} \label{ContEntropyConeLem}
Let $\sigma_i$ and $\sigma$ be $C^2$-hypersurfaces in $\mathbb{S}^n$. If $\sigma_i\to\sigma$ in $C^2(\mathbb{S}^n)$, then 
$$
\lambda[\mathcal{C}[\sigma_i]]\to\lambda[\mathcal{C}[\sigma]].
$$
\end{lem}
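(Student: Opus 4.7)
The plan is to show both $\lambda[\mathcal{C}[\sigma]]\leq\liminf_{i\to\infty}\lambda[\mathcal{C}[\sigma_i]]$ and $\limsup_{i\to\infty}\lambda[\mathcal{C}[\sigma_i]]\leq\lambda[\mathcal{C}[\sigma]]$. The main technical input for both will be a \emph{uniform convergence claim}: for every compact $K\subset\mathbb{R}^{n+1}$, the Gaussian-area functions $\mathbf{x}\mapsto F[\mathcal{C}[\sigma_i]+\mathbf{x}]$ converge uniformly on $K$ to $\mathbf{x}\mapsto F[\mathcal{C}[\sigma]+\mathbf{x}]$. I would prove this by using the uniform bound $\mathcal{H}^{n-1}(\sigma_i)\leq A$ (which follows from $C^2$ convergence) to get a uniform area ratio bound $\mathcal{H}^n(\mathcal{C}[\sigma_i]\cap B_R)\leq A R^n/n$, and then splitting each cone into a small ball $B_\rho$ near the vertex, an annulus $B_R\setminus B_\rho$, and a far tail $\mathbb{R}^{n+1}\setminus B_R$. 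The area bound makes the contribution from $B_\rho$ small, Gaussian decay combined with the area bound makes the tail contribution small, both uniformly in $i$ and $\mathbf{x}\in K$, and $C^0$ convergence of the cones on the annulus handles the remaining integral.

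Given this claim, lower semicontinuity is immediate: for any fixed $\mathbf{x}$, $F[\mathcal{C}[\sigma]+\mathbf{x}]=\lim_i F[\mathcal{C}[\sigma_i]+\mathbf{x}]\leq\liminf_i\lambda[\mathcal{C}[\sigma_i]]$, and taking the supremum over $\mathbf{x}$ yields the first inequality.

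For the upper semicontinuity, the plan is to localize the supremum defining $\lambda[\mathcal{C}[\sigma_i]]$ to a compact family of translations via Lemma \ref{EntropyUnifLem}. By Items \eqref{Sum3Item} and \eqref{Sum4Item} of Proposition \ref{SumProp}, there exist $\delta, r>0$ and $i_0$ so that for $i\geq i_0$ each $\sigma_i$ satisfies the uniform graphical hypothesis of Lemma \ref{EntropyUnifLem} with these constants. Fix $\epsilon>0$ and let $\tilde{\mathcal{R}}=\tilde{\mathcal{R}}(n,\epsilon,\delta,r)$ be as in that lemma; then for each $i\geq i_0$, either (a) $\lambda[\mathcal{C}[\sigma_i]]\leq 1+\epsilon$, or (b) $\lambda[\mathcal{C}[\sigma_i]]=F[\mathcal{C}[\sigma_i]+\mathbf{x}_{0,i}]$ for some $\mathbf{x}_{0,i}\in\bar{B}_{\tilde{\mathcal{R}}}$. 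Using $\lambda[\mathcal{C}[\sigma]]\geq 1$, which follows by blowing up at any smooth point $p\in\mathcal{C}[\sigma]$ so that the dilates $\rho(\mathcal{C}[\sigma]-\mathbf{x}(p))$ converge to the tangent plane $T_p\mathcal{C}[\sigma]$ with Gaussian area $1$, case (a) gives $\lambda[\mathcal{C}[\sigma_i]]\leq\lambda[\mathcal{C}[\sigma]]+\epsilon$. In case (b), compactness of $\bar{B}_{\tilde{\mathcal{R}}}$ allows extracting a subsequence with $\mathbf{x}_{0,i}\to\mathbf{x}_0$, and the uniform convergence claim gives $\lambda[\mathcal{C}[\sigma_i]]=F[\mathcal{C}[\sigma_i]+\mathbf{x}_{0,i}]\to F[\mathcal{C}[\sigma]+\mathbf{x}_0]\leq\lambda[\mathcal{C}[\sigma]]$. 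Partitioning a subsequence realizing the limsup into these two cases and then sending $\epsilon\to 0$ closes the argument.

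The main obstacle will be the uniform convergence claim, since each $\mathcal{C}[\sigma_i]$ is non-compact both near the singular vertex and at infinity, and one must control both regions simultaneously and uniformly in the translation $\mathbf{x}$. The conceptual difficulty is that entropy, as a supremum of Gaussian areas, is only \emph{a priori} lower semicontinuous under convergence of measures; Lemma \ref{EntropyUnifLem} is precisely the tool that localizes the supremum to a bounded translation parameter, making compactness arguments applicable and thus upgrading lower semicontinuity to full continuity.
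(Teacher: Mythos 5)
Your proof is correct and follows essentially the same route as the paper: lower semicontinuity of the Gaussian areas under the measure convergence $\mathcal{H}^n\lfloor\mathcal{C}[\sigma_i]\to\mathcal{H}^n\lfloor\mathcal{C}[\sigma]$ for one inequality, and Lemma \ref{EntropyUnifLem} together with the uniform area ratio bound $\mathcal{H}^n(\mathcal{C}[\sigma_i]\cap B_r(\mathbf{x}))\leq Ar^n$ to localize the translations and control the tails for the other. The only difference is presentational: you make explicit the uniform convergence of $\mathbf{x}\mapsto F[\mathcal{C}[\sigma_i]+\mathbf{x}]$ on compact sets and the subsequence extraction $\mathbf{x}_{0,i}\to\mathbf{x}_0$, which the paper leaves implicit in the line deducing $\limsup_i\lambda[\mathcal{C}[\sigma_i]]\leq\max\{1+\epsilon,\lambda[\mathcal{C}[\sigma]]\}$.
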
 

\begin{proof}
As $\sigma_i\to\sigma$ in $C^2(\mathbb{S}^n)$,
$$
\mathcal{H}^n\lfloor\mathcal{C}[\sigma_i]\to\mathcal{H}^n\lfloor\mathcal{C}[\sigma].
$$
By the lower semicontinuity of entropy
$$
\liminf_{i\to\infty}\lambda[\mathcal{C}[\sigma_i]] \geq \lambda[\mathcal{C}[\sigma]].
$$

By Lemma \ref{EntropyUnifLem}, given $\epsilon>0$ there is an $R=R(n,\epsilon,\sigma)>0$ so that for $i$ sufficiently large, either $\lambda[\mathcal{C}[\sigma_i]]\leq 1+\epsilon$ or $\lambda[\mathcal{C}[\sigma_i]]=F[\mathcal{C}[\sigma_i]+\mathbf{x}_i]$ for some $\mathbf{x}_i\in\bar{B}_{R}$. Observe, that there is an $A=A(n,\sigma)$ so that for $i$ sufficiently large and for every $r>0$ and $\mathbf{x}\in\mathbb{R}^{n+1}$,
$$
\mathcal{H}^n(\mathcal{C}[\sigma_i]\cap B_r(\mathbf{x})) \leq Ar^n.
$$
Hence, we get
$$
\limsup_{i\to\infty} \lambda[\mathcal{C}[\sigma_i]]\leq\max\set{1+\epsilon,\lambda[\mathcal{C}[\sigma]]}.
$$
Passing $\epsilon$ to $0$, as $\lambda[\mathcal{C}[\sigma]]\geq 1$, it follows that
$$
\limsup_{i\to\infty} \lambda[\mathcal{C}[\sigma_i]] \leq\lambda[\mathcal{C}[\sigma]],
$$
completing the proof.
\end{proof}

We are now ready to prove that $\mathcal{E}_{n,\mathrm{ent}}^{k,\alpha}(\Lambda_0)$ is compact. In order to do so we introduce a necessary hypothesis about the entropy of minimal cones. Let $\mathcal{RMC}_n$ denote the space of \emph{regular minimal cones} in $\Real^{n+1}$, that is $\mathcal{C}\in \mathcal{RMC}_n$ if and only if it is a proper subset of $\Real^{n+1}$ and $\mathcal{C}\setminus\set{\OO}$ is a hypersurface in $\Real^{n+1}\setminus\set{\OO}$ that is invariant under dilation about $\OO$ and with vanishing mean curvature. Let $\mathcal{RMC}_n^*$ denote the set of non-flat elements of $\mathcal{RMC}_n$ -- i.e., cones with non-zero curvature somewhere. For any $\Lambda>0$, let 
$$
\mathcal{RMC}_n(\Lambda)=\set{\mathcal{C}\in \mathcal{RMC}_n\colon \lambda[\mathcal{C}]< \Lambda} \mbox{ and } \mathcal{RMC}_n^*(\Lambda)=\mathcal{RMC}^*_n \cap \mathcal{RMC}_n(\Lambda).
$$
Now fix a dimension $n\geq 2$ and a value $ \Lambda>1$.  Consider the following hypothesis:
\begin{equation} \label{Assump1}
\mbox{For all $2\leq l\leq n$, }\mathcal{RMC}_{l}^*(\Lambda)=\emptyset \tag{$\star_{n,\Lambda}$}.
\end{equation}
Observe that all regular minimal cones in $\mathbb{R}^2$ consist of unions of rays and so $\mathcal{RMC}^*_1=\emptyset$. Likewise, as great circles are the only geodesics in $\mathbb{S}^2$, $\mathcal{RMC}_2^*=\emptyset$ and so $\rstar{2,\Lambda}$ always holds. As a consequence of Allard's regularity theorem and a dimension reduction argument, there is always some $\Lambda_n>1$ so that $\rstar{n,\Lambda_n}$ holds.

\begin{proof}[Proof of Item \eqref{EntropyCpctItem} of Theorem \ref{SmoothCpctThm}]
First, by Corollary \ref{EndCompactCor}, there is an $R>0$ so that, up to passing to a subsequence, $\Sigma_i\setminus\bar{B}_R$ converges in $C^\infty_{loc}(\mathbb{R}^{n+1}\setminus\bar{B}_R)$ to some hypersurface $\Sigma^\prime$ in $\mathbb{R}^{n+1}\setminus\bar{B}_R$. Moreover, $\Sigma^\prime$ is a $C^{k,\alpha}_*$-asymptotically conical self-expander in $\mathbb{R}^{n+1}\setminus \bar{B}_R$ and $\mathcal{L}(\Sigma^\prime)=\sigma$.

As $\lambda[\Sigma_i]\leq \Lambda_0<\Lambda$, Lemma \ref{AreaRatioLem} and the standard compactness results, imply that, up to passing to a further subsequence, $\Sigma_i\cap B_{2R}$ converges in the sense of measures to some integral varifold, $V$, in $B_{2R}$. As such, there is an integral varifold, $\Sigma$, in $\Real^{n+1}$ that agrees with $V$ in $B_{2R}$ and $\Sigma^\prime$ outside $\bar{B}_R$.  In particular, $\Sigma$ is smooth and properly embedded in $\Real^{n+1}\setminus \bar{B}_R$. The lower semicontinuity of entropy gives $\lambda[\Sigma]\leq \Lambda_0<\Lambda$, and so it follows from \eqref{Assump1}, a dimension reduction theorem \cite[Theorem 4]{WhiteStratification} and Allard's regularity theorem (e.g., \cite[Theorem 24.2]{Simon}) that $\Sigma$ is actually smooth and properly embedded in $\Real^{n+1}$.  That is, $\Sigma\in \mathcal{E}_{n, \mathrm{ent}}^{k,\alpha}(\Lambda_0)$.  Finally, a further consequence of Allard's regularity theorem \cite{WhiteReg} is that $\Sigma_i\to\Sigma$ in $C^{\infty}_{loc}(\mathbb{R}^{n+1})$, finishing the proof.
\end{proof}

\subsection{Smooth compactness of $\mathcal{E}^{k,\alpha}_{\mathrm{top}}(g,e)$}
Combining the asymptotic compactness property, the area estimates from Lemma \ref{AreaRatioLem} and a result of White  \cite[Theorem 3 (4)]{WhiteInventiones} we can easily prove Item \eqref{2DCpctItem} of Theorem \ref{SmoothCpctThm}.

\begin{proof}[Proof of Item \eqref{2DCpctItem} of Theorem \ref{SmoothCpctThm}]
By Corollary \ref{EndCompactCor} there is an $R>0$ so that, up to passing to a subsequence, 
$$
\Sigma_i\setminus\bar{B}_R\to\Sigma^\prime \mbox{ in $C^{\infty}_{loc}(\mathbb{R}^3\setminus\bar{B}_R)$},
$$
where $\Sigma^\prime$ is a $C^{k,\alpha}_*$-asymptotically conical self-expander in $\mathbb{R}^3\setminus\bar{B}_R$, $\Sigma^\prime$ consists of $e$ annuli and $\mathcal{L}(\Sigma^\prime)=\sigma$. In particular, for $r>R$ sufficiently large, $\partial B_r$ meets $\Sigma$ transversally and also meets each $\Sigma_i$ transversally. As such, $\Sigma_i \cap B_r$ has genus $g$ and $\Sigma_i\cap \partial B_r $ has $e$ components. Thus, it follows that there is a constant $C=C(g,e)$ so
$$
\int_{\partial B_r\cap\Sigma_i} \kappa < C
$$
where $\kappa$ denotes the geodesic curvature of the boundary curve. Moreover, by Lemma \ref{ContEntropyConeLem}, for $i$ sufficiently large, $\lambda[\mathcal{C}(\Sigma_i)] \leq \lambda[\mathcal{C}[\sigma]]+1$ and so, by Lemma \ref{AreaRatioLem}, there is a $C'$ so
$$
\mathcal{H}^2(\Sigma_i\cap B_r) \leq C^\prime.
$$
Hence, it follows from \cite[Theorem 3 (4)]{WhiteInventiones} that, up to passing to a further subsequence, 
$$
\Sigma_i\cap B_r\to\Sigma^{\prime\prime} \mbox{ in $C^{\infty}_{loc}(B_r)$}
$$
where $\Sigma^{\prime\prime}$ is a self-expander in $B_r$ and the convergence is with multiplicity one.   It is clear that $\Sigma^\prime=\Sigma^{\prime\prime}$ in $B_r\setminus\bar{B}_R$. Therefore the result follows with $\Sigma=\Sigma^\prime\cup\Sigma^{\prime\prime}$.
\end{proof}

\subsection{Smooth compactness of $\mathcal{E}^{k,\alpha}_{n,\mathrm{mc}}(h_0)$}
We combine Lemma \ref{AreaRatioLem} with a curvature estimate for mean convex self-expanders in order to prove Item \eqref{MCCpctItem} of Theorem \ref{SmoothCpctThm}.

First we show a curvature estimate for strictly mean convex asympotitically conical self-expanders with strictly mean convex link. Our argument uses the maximum principle and is completely analogous to the one used in \cite{CMGenMCF} for mean convex self-shrinkers.

\begin{lem}\label{CurvBoundLem}
Let $\Sigma\subset \Real^{n+1}$, be an asymptotically conical self-expander. If $\Sigma$ is strictly mean convex and whose asymptotic cone, $\mathcal{C}(\Sigma)$ is $C^2$-regular and strictly mean convex, then, for $p\in\Sigma$,
$$
|A_\Sigma(p)|^2 \leq  K |\mathbf{x}(p)|^2
$$
where
$$
K=\frac{1}{4} \sup_{\mathcal{C}(\Sigma)} \frac{|A_{\mathcal{C}(\Sigma)}|^2}{H_{\mathcal{C}(\Sigma)}^2}=\frac{1}{4} \sup_{\mathcal{L}(\Sigma)} \frac{|A_{\mathcal{L}(\Sigma)}|^2}{H_{\mathcal{L}(\Sigma)}^2}<\infty.
$$
\end{lem}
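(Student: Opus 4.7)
The plan is to follow the Colding--Minicozzi strategy for mean convex self-shrinkers: bound the scale-invariant ratio $w = |A_\Sigma|^2/H_\Sigma^2$ pointwise by $4K$ via a drift-Laplacian maximum-principle argument, then combine with the expander equation to obtain the $|\mathbf{x}|^2$-decay.

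I would first observe that $|A_{\mathcal{C}(\Sigma)}|^2/H_{\mathcal{C}(\Sigma)}^2$ is scale-invariant and agrees with $|A_{\mathcal{L}(\Sigma)}|^2/H_{\mathcal{L}(\Sigma)}^2$. Indeed, by dilation invariance of $\mathcal{C}(\Sigma)$, $|A_{\mathcal{C}(\Sigma)}(\rho p)|^2 = \rho^{-2}|A_{\mathcal{C}(\Sigma)}(p)|^2$ and $H_{\mathcal{C}(\Sigma)}(\rho p) = \rho^{-1}H_{\mathcal{C}(\Sigma)}(p)$, so the ratio is homogeneous of degree $0$ and attains all its values on $\mathcal{L}(\Sigma)$. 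Moreover, the radial directions of $\mathcal{C}(\Sigma)$ are geodesics contributing a zero eigenvalue to $A_{\mathcal{C}(\Sigma)}$, while the remaining tangential directions reproduce the second fundamental form of $\mathcal{L}(\Sigma)\subset\mathbb{S}^n$; hence the two ratios coincide at $p\in\mathcal{L}(\Sigma)$. Compactness and $C^2$-regularity of $\mathcal{L}(\Sigma)$, together with strict mean convexity $H_{\mathcal{L}(\Sigma)} \geq h_0 > 0$, give $K < \infty$.

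Next, I would use the asymptotic conicality of $\Sigma$ to obtain $\limsup_{|\mathbf{x}(p)|\to\infty} w(p) \leq 4K$. By Proposition \ref{AsympRegProp}, for $|\mathbf{x}(p)|$ large $\Sigma$ is a $C^2$ graph over $\mathcal{C}(\Sigma)$ of size $o(1)$ on scale $\eta|\mathbf{x}(p)|$, whence $|A_\Sigma(p)|^2 = |\mathbf{x}(p)|^{-2}(|A_{\mathcal{L}(\Sigma)}(\hat p)|^2 + o(1))$ and $H_\Sigma(p) = |\mathbf{x}(p)|^{-1}(H_{\mathcal{L}(\Sigma)}(\hat p) + o(1))$ for a nearby $\hat p \in \mathcal{L}(\Sigma)$, so dividing yields $w(p) \leq 4K + o(1)$. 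The main analytic step is then to establish $w \leq 4K$ on all of $\Sigma$. Using the expander equation $H_\Sigma = -\mathbf{x}\cdot\mathbf{n}_\Sigma/2$ together with Simons' identity, one derives drift-Laplacian identities for $H_\Sigma$ and $|A_\Sigma|^2$ analogous to Colding--Minicozzi's for self-shrinkers, but with the sign of the drift term reversed. A direct computation of the drift-Laplacian of $w$, together with a Kato-type inequality relating $|\nabla A_\Sigma|$ and $|\nabla H_\Sigma|$, yields an elliptic inequality of the schematic form
$$L w + 2 H_\Sigma^{-1} \nabla H_\Sigma \cdot \nabla w \geq 0$$
on $\Sigma$, where $L = \Delta_\Sigma + \tfrac{1}{2}\mathbf{x}^T\cdot\nabla$ is the drift Laplacian for self-expanders. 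Combined with the asymptotic bound just established, the strong maximum principle (applied on a compact exhaustion of $\Sigma$) forces $w \leq 4K$ everywhere. Finally, the expander equation gives $H_\Sigma^2 = (\mathbf{x}\cdot\mathbf{n}_\Sigma)^2/4 \leq |\mathbf{x}|^2/4$, so $|A_\Sigma|^2 \leq 4K H_\Sigma^2 \leq K|\mathbf{x}|^2$.

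The main obstacle is the Bochner-type computation yielding the elliptic inequality for $w$: one must arrange that the first-order drift term, the gradient of $H_\Sigma$, and the absorbed $|\nabla A_\Sigma|^2$--$|\nabla H_\Sigma|^2$ contributions combine with the correct signs. This mirrors Colding--Minicozzi's computation for self-shrinkers, but the flipped drift sign for self-expanders requires careful bookkeeping to ensure the zeroth-order terms cancel favorably. A secondary issue is the non-compactness of $\Sigma$: since we have no a priori curvature bound, the maximum principle must be applied on compact exhaustions, with boundary contributions controlled by the asymptotic estimate.
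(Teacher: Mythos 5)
Your proposal is correct and follows essentially the same route as the paper: establish the asymptotic bound $\limsup w \leq 4K$ from conicality, show $w=|A_\Sigma|^2/H_\Sigma^2$ is a subsolution of the drift operator $\Delta_\Sigma+\tfrac{1}{2}\mathbf{x}\cdot\nabla_\Sigma+2\nabla_\Sigma\log H_\Sigma\cdot\nabla_\Sigma$ via the expander analogues of the Colding--Minicozzi identities $L_\Sigma H_\Sigma=-H_\Sigma$ and $(L_\Sigma A_\Sigma)_{ij}=-(A_\Sigma)_{ij}$, apply the maximum principle on $B_R\cap\Sigma$, and finish with $H_\Sigma^2=(\mathbf{x}\cdot\mathbf{n}_\Sigma)^2/4\leq|\mathbf{x}|^2/4$. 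The only cosmetic difference is that the paper obtains the differential inequality by noting the tensor $A_\Sigma/H_\Sigma$ is annihilated by the drift operator (so $w$ is automatically a subsolution with $2|\nabla_\Sigma(A_\Sigma/H_\Sigma)|^2\geq 0$ on the right), which sidesteps the Kato-type bookkeeping you anticipate.
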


\begin{proof}
By definition of asymptotic cones and scaling invariance
$$
\lim_{R\to \infty} \sup_{\partial B_R \cap \Sigma}  \frac{|A_{\Sigma}|^2}{H_{\Sigma}^2}=4K.
$$
Hence, for every $\epsilon>0$, there is an $R_\epsilon$ so if $R>R_\epsilon$, then
$$
\sup_{\partial B_R \cap \Sigma}  \frac{|A_{\Sigma}|^2}{H_{\Sigma}^2}\leq 4K+\epsilon.
$$
Computing as in \cite{CMGenMCF} one has
$$
L_\Sigma H_\Sigma =\left(\Delta_\Sigma +\frac{\xX}{2}\cdot \nabla_\Sigma+|A_\Sigma|^2 -\frac{1}{2}\right) H_\Sigma=-H_{\Sigma}
$$
which follows from a variant on Simons' identity, for the rough drift Laplacian,
$$
\left(L_\Sigma A_\Sigma\right)_{ij}=-(A_\Sigma)_{ij}.
$$
For details see the computations in (10.10)-(10.12) and (5.7)-(5.8) of \cite{CMGenMCF}. These are carried out for self-shrinkers, however the formulas for self-expanders follow with a simple sign change.
	
Now consider the new operator 
$$
\mathscr{L}_{H^2_\Sigma}= \Delta_\Sigma+\frac{\xX}{2}\cdot \nabla_\Sigma+2(\nabla_\Sigma \log H_{\Sigma}) \cdot \nabla_\Sigma
$$
We compute that
$$
\left(\mathscr{L}_{H^2_\Sigma} \frac{A_\Sigma}{H_\Sigma}\right)_{ij}= 0
$$
and so
$$
\mathscr{L}_{H^2_\Sigma}\frac{|A_\Sigma|^2}{H_\Sigma^2}=2\left| \nabla_\Sigma \frac{A_\Sigma}{H_\Sigma}\right|^2\geq 0.
$$
Hence, by the maximum principle, for any $R>R_\epsilon$,
$$
\sup_{B_{R} \cap \Sigma}\frac{|A_\Sigma|^2}{H_\Sigma^2} \leq 4K+\epsilon.
$$
Hence, letting $\epsilon\to 0$, gives
$$
\sup_{\Sigma} \frac{|A_\Sigma|^2}{H_\Sigma^2} \leq 4K.
$$
That is, for any $p\in \Sigma$,
$$
|A_{\Sigma}(p)|^2 \leq 4K H_{\Sigma}^2(p)\leq K (\xX(p)\cdot \nN_\Sigma(p))^2\leq K |\xX(p)|^2.
$$
This proves the claim.
\end{proof}

We are now ready to complete the proof of Theorem \ref{SmoothCpctThm}.
\begin{proof}[Proof of Item \eqref{MCCpctItem} of Theorem \ref{SmoothCpctThm}]
By Corollary \ref{EndCompactCor} there is an $R>0$ so that, up to passing to a subsequence, 
$$
\Sigma_i\setminus\bar{B}_R\to\Sigma^\prime \mbox{ in $C^{\infty}_{loc}(\mathbb{R}^{n+1}\setminus\bar{B}_R)$},
$$
where $\Sigma^\prime$ is a $C^{k,\alpha}_*$-asymptotically conical self-expander in $\mathbb{R}^{n+1}\setminus\bar{B}_R$ and $\mathcal{L}(\Sigma^\prime)=\sigma$. The nature of the convergence, ensures that $H_{\sigma}\geq h_0$. Moreover, by Lemma \ref{ContEntropyConeLem}, for $i$ sufficiently large, $\lambda[\mathcal{C}(\Sigma_i)] \leq \lambda[\mathcal{C}[\sigma]]+1$. Thus, by Lemma \ref{AreaRatioLem}, there is a uniform $C^\prime$ so
$$
\mathcal{H}^n(\Sigma_i\cap B_r) \leq C^\prime.
$$

As $\sigma$ is strictly mean convex, for $i$ sufficiently large each $\mathcal{L}(\Sigma_i)$ is strictly mean convex. Indeed, setting
$$
K=\sup_{\sigma} \frac{|A_{\sigma}|^2}{H_{\sigma}^2} \in (0, \infty)
$$
one has, after possibly throwing out a finite sequence of the $\Sigma_i$, that
$$
\sup_{\mathcal{L}(\Sigma_i)} \frac{|A_{\mathcal{L}(\Sigma_i)}|^2}{H_{{\mathcal{L}(\Sigma_i)}}^2} \leq 4 K.
$$
Hence, by Lemma \ref{CurvBoundLem},
$$
|A_{\Sigma_i}(p)|^2\leq K  |\mathbf{x}(p)|^2.
$$
That is, 
$$
\sup_{\Sigma_i\cap B_{2R}} |A_{\Sigma_i}|^2 \leq 4 K R^2.
$$	
Combining this with the area bound and the Arzel\`{a}-Ascoli theorem, gives that, up to passing to a subsequence, the $\Sigma_i$ converge, possibly with multiplicities, in $C^\infty_{loc}(B_{2R})$ to a limit $\Sigma^{\prime\prime}$. As $\Sigma^{\prime\prime}$ is a smooth solution to \eqref{ExpanderEqn} and there are no closed self-expanders, each component of $\Sigma^{\prime\prime}$ meets $\Sigma^{\prime}$.  In particular, as the $\Sigma_i$ converge with multiplicity one to $\Sigma^\prime$ in $B_{2R}\setminus \bar{B}_R$, the $\Sigma_i$ converge to $\Sigma^{\prime\prime}$ with multiplicity one in $B_{2R}$. Hence, setting $\Sigma=\Sigma^\prime\cup \Sigma^{\prime\prime}$ one obtains a smooth asymptotically conical self-expander with $\Sigma_i\to \Sigma$ in $C^{\infty}_{loc}(\Real^{n+1})$. As each $\Sigma_i$ has positive mean curvature, $\Sigma$ has non-negative mean curvature. However, as $\sigma=\mathcal{L}(\Sigma)$ has $H_{\mathcal{L}(\Sigma)}\geq h_0>0$ and $L_{\Sigma} H_{\Sigma}=-H_\Sigma\leq 0$, the strong maximum principle implies $\Sigma$ has positive mean curvature completing the proof.
\end{proof}

\section{Properness of map $\Pi$} \label{ProperSec}
Before proving Theorems \ref{2DProperThm}, \ref{EntropyProperThm}  and \ref{MCProperThm} we need the following auxiliary proposition that relates sequential compactness in $\mathcal{ACE}^{k,\alpha}_n(\Gamma)$ to locally smooth compactness in $\Real^{n+1}$. 

\begin{prop} \label{ParametrizeProp}
For $\Gamma\in\mathcal{ACH}^{k,\alpha}_n$, if $\varphi\in\mathcal{V}_{\mathrm{emb}}^{k,\alpha}(\Gamma)$, and $[\mathbf{f}_i]\in \mathcal{ACE}^{k,\alpha}_n(\Gamma)$ satisfy:
\begin{enumerate}
\item \label{TraceConvergeItem} $\mathrm{tr}^1_\infty [\mathbf{f}_i] =\varphi_i\to \varphi$ in $C^{k,\alpha}(\mathcal{L}(\Gamma); \Real^{n+1})$;
\item \label{LocConvergeItem} $\mathbf{f}_i(\Gamma)=\Sigma_i\to \Sigma$ in $C^\infty_{loc}(\Real^{n+1})$ for some hypersurface $\Sigma$,
\end{enumerate}
then $\Sigma\in \mathcal{ACH}_n^{k,\alpha}$ and is a self-expander. Moreover, there is a parameterization $\mathbf{f}\colon \Gamma \to \Sigma\subset\mathbb{R}^{n+1}$ so that  
\begin{enumerate}
\item \label{ACExpanderItem} $[\mathbf{f}]\in\mathcal{ACE}_n^{k,\alpha}(\Gamma)$;
\item \label{PrescribeTraceItem} $\mathrm{tr}^1_\infty [\mathbf{f}]=\varphi$; and;
\item \label{ACEConvergeItem} $[\mathbf{f}_i]\to [\mathbf{f}]$ in the topology of $\mathcal{ACE}^{k,\alpha}_n(\Gamma)$.
\end{enumerate} 
\end{prop}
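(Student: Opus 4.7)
The plan has three parts: (i) show that $\Sigma$ is a $C^{k,\alpha}_*$-asymptotically conical self-expander whose asymptotic cone is $\mathscr{E}^{\mathrm{H}}_1[\varphi](\mathcal{C}(\Gamma))$; (ii) construct a parameterization $\mathbf{f}\colon\Gamma\to\Sigma$ with $\mathrm{tr}^1_\infty[\mathbf{f}]=\varphi$; and (iii) produce $C^{k,\alpha}$ diffeomorphisms $\phi_i\colon\Gamma\to\Gamma$ fixing infinity so that $\mathbf{f}_i\circ\phi_i\to\mathbf{f}$ in $C^{k,\alpha}_1\cap C^k_{1,\mathrm{H}}(\Gamma;\mathbb{R}^{n+1})$. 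Step (i) is a consequence of the asymptotic-regularity machinery developed earlier in the paper. Hypothesis \eqref{TraceConvergeItem} gives $\mathcal{C}(\Sigma_i)=\mathscr{E}^{\mathrm{H}}_1[\varphi_i](\mathcal{C}(\Gamma))\to\mathcal{C}:=\mathscr{E}^{\mathrm{H}}_1[\varphi](\mathcal{C}(\Gamma))$ in $C^{k,\alpha}_{loc}$, so Items \eqref{Sum3Item} and \eqref{Sum4Item} of Proposition \ref{SumProp} yield uniform graphical parameters for the links. Proposition \ref{AsympRegProp} then furnishes a radius $\mathcal{R}$ and a uniform $|\mathbf{x}|^{-1}$ graphical decay of the $\Sigma_i$ over $\mathcal{C}(\Sigma_i)$ outside $\bar{B}_{\mathcal{R}}$; combined with hypothesis \eqref{LocConvergeItem} and standard elliptic regularity for \eqref{ExpanderEqn}, this forces $\Sigma\in\mathcal{ACH}^{k,\alpha}_n$ with $\mathcal{C}(\Sigma)=\mathcal{C}$.

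For Step (ii), I would fix homogeneous transverse sections $\mathbf{v}_\Gamma$ on $\mathcal{C}(\Gamma)$ and $\mathbf{v}_\Sigma$ on $\mathcal{C}(\Sigma)$. Since $\Sigma$ outside a compact set is the $\mathbf{v}_\Sigma$-graph of some $w\in C^{k,\alpha}_1\cap C^k_{1,0}(\mathcal{C}_R(\Sigma))$, composing $\theta_{\mathbf{v}_\Gamma;\Gamma'}^{-1}$ with $\mathscr{E}^{\mathrm{H}}_1[\varphi]$ and then pushing along $\mathbf{v}_\Sigma$ by $w$ yields a parameterization of the asymptotic piece of $\Sigma$ that lies in $C^{k,\alpha}_1\cap C^k_{1,\mathrm{H}}$ and has trace at infinity equal to $\varphi$. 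Each $\Sigma_i$ is $C^{k,\alpha}$-diffeomorphic to $\Gamma$ via $\mathbf{f}_i$, and the uniform graphicality from Step (i) together with hypothesis \eqref{LocConvergeItem} imply $\Sigma$ is diffeomorphic to $\Sigma_i$ for $i$ large, and hence to $\Gamma$. Thus the asymptotic parameterization extends to a global $C^{k,\alpha}$ diffeomorphism $\mathbf{f}\colon\Gamma\to\Sigma$ with $[\mathbf{f}]\in\mathcal{ACE}_n^{k,\alpha}(\Gamma)$.

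For Step (iii), I would construct $\phi_i$ by gluing two pieces. On the interior $\mathbf{f}^{-1}(\Sigma\cap\bar{B}_{3\mathcal{R}})$, the $C^\infty_{loc}$ convergence $\Sigma_i\to\Sigma$ supplies $C^{k,\alpha}$ diffeomorphisms from $\Sigma\cap\bar{B}_{3\mathcal{R}}$ into $\Sigma_i$ converging to the inclusion, and pulling back through $\mathbf{f}$ and $\mathbf{f}_i^{-1}$ defines $\phi_i^{\mathrm{int}}$. On the outer region, applying the Step-(ii) construction with $\varphi_i$ and $\Sigma_i$ in place of $\varphi$ and $\Sigma$ produces representatives of $[\mathbf{f}_i]$ that differ from $\mathbf{f}$ by quantities controlled by hypothesis \eqref{TraceConvergeItem} and by the uniform estimates of Proposition \ref{AsympRegProp}, yielding $\phi_i^{\mathrm{out}}$ on the outer piece. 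A smooth cutoff interpolating the two then gives a global $C^{k,\alpha}$ diffeomorphism $\phi_i$ fixing infinity, and the uniform asymptotic estimates imply $\mathbf{f}_i\circ\phi_i\to\mathbf{f}$ in the required Banach norm.

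The hard part will be Step (iii). One must simultaneously ensure each $\phi_i$ is a genuine global diffeomorphism, that its trace at infinity equals $\mathbf{x}|_{\mathcal{L}(\Gamma)}$ in the precise sense of Section \ref{ACESubsec}, and that convergence occurs in both the weighted H\"{o}lder norm $\Vert\cdot\Vert_{k,\alpha}^{(1)}$ and the asymptotic-homogeneity sense encoded in $C^k_{1,\mathrm{H}}$. The latter requires uniform, $i$-independent quantitative control of the homogeneous trace of $\mathbf{f}_i\circ\phi_i-\mathbf{f}$, which a naive cutoff does not automatically preserve; the interpolation across the transition annulus must therefore be arranged carefully so that these controls survive, exploiting the quantitative decay furnished by Proposition \ref{AsympRegProp} and the trace convergence $\varphi_i\to\varphi$.
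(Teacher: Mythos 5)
Your Step (i) matches the paper (it is exactly Corollary \ref{EndCompactCor} plus the observation that the expander equation passes to the limit), and your Step (ii) is a workable, if slightly more topological, variant of what the paper does. The genuine gap is in Step (iii), and it is not merely a technical bookkeeping issue with the cutoff: no amount of careful gluing of an interior diffeomorphism (coming from $C^\infty_{loc}$ convergence) with an outer graphical one (coming from Proposition \ref{AsympRegProp}) can, by itself, produce convergence in the norm of $C^{k,\alpha}_1\cap C^k_{1,\mathrm{H}}(\Gamma;\mathbb{R}^{n+1})$. The inputs you have are (a) convergence in $C^\infty$ on compact sets, (b) the scale-invariant $C^{k,\alpha}$ graphical bounds of Item \eqref{LocGraphItem} of Proposition \ref{AsympRegProp}, which are uniform but do not decay, and (c) the decay estimate of Item \eqref{LinearDecayItem}, which controls only the function and its first derivative by $M|\mathbf{x}|^{-1}$. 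Interpolating (b) and (c) loses the top-order H\"{o}lder seminorm: by Arzel\`{a}--Ascoli one gets at best convergence of the normal graph of $\Sigma_i$ over $\Sigma$ in $C^{k,\beta}_1$ for $\beta<\alpha$ (or in $C^{k-1,\alpha}_1$), uniformly on the end, not in $C^{k,\alpha}_1$, and one gets no control at all on the asymptotic-homogeneity component required for $C^k_{1,\mathrm{H}}$ convergence beyond the leading order. Your closing paragraph correctly senses this danger but offers no mechanism to overcome it.

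The missing idea is to use the equation. The paper writes $\Sigma_i$, for large $i$, as a $\mathbf{v}$-graph $u_i\in C^{k,\alpha}_1\cap C^k_{1,0}(\Sigma)$ over an auxiliary hypersurface $\Upsilon_i=\mathbf{g}_i(\Sigma)$, where $\mathbf{g}_i=\mathbf{x}|_\Sigma+\mathbf{h}_i$ and $\mathbf{h}_i$ solves $\mathscr{L}_\Sigma\mathbf{h}_i=\mathbf{0}$ with $\mathrm{tr}^1_\infty[\mathbf{h}_i]=\tilde{\varphi}_i-\mathbf{x}|_{\mathcal{L}}$; this absorbs the moving trace at infinity into a term controlled linearly by $\Vert\tilde\varphi_i-\mathbf{x}|_{\mathcal{L}}\Vert_{k,\alpha}$ via the solvability theory of \cite[Corollary 5.8]{BernsteinWangBanach}, so that $u_i$ has \emph{zero} trace at infinity. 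Because each $\Sigma_i$ satisfies \eqref{ExpanderEqn}, $u_i$ satisfies a quasilinear elliptic equation of the form $\mathscr{L}_\Sigma u_i=E_i$ with $\Vert E_i\Vert^{(-1)}_{k-2,\alpha}$ uniformly bounded; the global Schauder theory for the drift operator on the asymptotically conical $\Sigma$ (\cite[Theorem 5.7 and Corollary 5.8]{BernsteinWangBanach}) bounds $\Vert u_i\Vert^*_{k,\alpha}$, the compact embedding $\mathcal{D}^{k,\alpha}(\Sigma)\hookrightarrow C^{k-1,\alpha}_1(\Sigma)$ combined with $\Vert u_i\Vert^{(1)}_1\to 0$ gives $\Vert u_i\Vert^{(1)}_{k-1,\alpha}\to 0$, and \cite[Lemma 7.5]{BernsteinWangBanach} then upgrades this to $\Vert u_i\Vert^*_{k,\alpha}\to 0$. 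This is precisely the step that converts weak geometric convergence into convergence in the Banach manifold topology, and it also dispenses with your cutoff entirely: the resulting maps $\tilde{\mathbf{f}}_i=\mathbf{g}_i+u_i\mathbf{v}$ are globally defined, $\tilde{\mathbf{f}}_i\circ\mathbf{f}$ is automatically a representative of $[\mathbf{f}_i]$ (so no diffeomorphism $\phi_i$ of $\Gamma$ needs to be glued by hand), and $\mathbf{f}$ itself can be taken to be $\tilde{\mathbf{f}}_I^{-1}\circ\mathbf{f}_I$ for one fixed large $I$, which settles your Step (ii) as well. Without some equivalent appeal to the linear theory for $\mathscr{L}_\Sigma$, your Step (iii) does not close.
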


\begin{proof}
First observe that as each $\Sigma_i$ satisfies \eqref{ExpanderEqn}, the nature of the convergence ensures that $\Sigma$ does as well. Let 
$$
\mathcal{C}_i=\mathscr{E}^{\mathrm{H}}_1[\varphi_i](\mathcal{C}(\Gamma))=\mathcal{C}(\Sigma_i) \mbox{ and } \mathcal{C}=\mathscr{E}^{\mathrm{H}}_1[\varphi](\mathcal{C}(\Gamma)).
$$
Then $\mathcal{C}_i$ and $\mathcal{C}$ are $C^{k,\alpha}$-regular cones. By our hypothesis \eqref{TraceConvergeItem},
$$
\mathcal{C}_i\to\mathcal{C} \mbox{ in $C^{k,\alpha}_{loc}(\mathbb{R}^{n+1}\setminus\{\mathbf{0}\})$},
$$
which further implies 
$$
\mathcal{L}(\Sigma_i)\to\mathcal{L}[\mathcal{C}] \mbox{ in $C^{k,\alpha}(\mathbb{S}^n)$}.
$$
Thus, by Corollary \ref{EndCompactCor} and our hypothesis \eqref{LocConvergeItem}, we have $\Sigma\in\mathcal{ACH}^{k,\alpha}_n$ with $\mathcal{C}(\Sigma)=\mathcal{C}$.

As $\varphi\in \mathcal{V}_{\mathrm{emb}}^{k,\alpha}(\Gamma)$, the hypothesis \eqref{TraceConvergeItem} ensures that
$$
\mathscr{E}^{\mathrm{H}}_1[\varphi_i]\circ\mathscr{E}^{\mathrm{H}}_1[\varphi]^{-1}\to\mathbf{x}|_{\mathcal{C}} \mbox{ in $C^{k,\alpha}_{loc}(\mathcal{C};\mathbb{R}^{n+1})$}.
$$
Observe, that $\mathscr{E}^{\mathrm{H}}_1[\varphi_i]\circ\mathscr{E}^{\mathrm{H}}_1[\varphi]^{-1}$ are homogeneous of degree one, so we denote their traces at infinity by $\tilde{\varphi}_i$. Thus, letting $\mathcal{L}$ be the link of $\mathcal{C}$, 
$$
\tilde{\varphi}_i\to \mathbf{x}|_{\mathcal{L}} \mbox{ in $C^{k,\alpha}(\mathcal{L};\mathbb{R}^{n+1})$}.
$$
Let $\mathbf{h}_i\in C^{k,\alpha}_1\cap C_{1,\mathrm{H}}^{k}(\Sigma;\mathbb{R}^{n+1})$ be chosen so that 
$$
\mathscr{L}_{\Sigma}\mathbf{h}_i=\Delta_\Sigma\mathbf{h}_i+\frac{1}{2}\mathbf{x}\cdot\nabla_\Sigma\mathbf{h}_i-\frac{1}{2}\mathbf{h}_i=\mathbf{0} \mbox{ and } \mathrm{tr}_\infty^1[\mathbf{h}_i]=\tilde{\varphi}_i-\mathbf{x}|_{\mathcal{L}}.
$$
By \cite[Corollary 5.8]{BernsteinWangBanach} there is a unique such $\mathbf{h}_i$ and it satisfies the estimate 
$$
\Vert\mathbf{h}_i\Vert_{k,\alpha}^{(1)} \leq C^\prime \Vert\tilde{\varphi}_i-\mathbf{x}|_{\mathcal{L}}\Vert_{k,\alpha}
$$
for some constant $C^{\prime}=C^\prime(\Sigma,n,k,\alpha)$. We let 
$$
\mathbf{g}_i=\mathbf{x}|_\Sigma+\mathbf{h}_i \mbox{ and } \Upsilon_i=\mathbf{g}_i(\Sigma).
$$
It is clear that for $i$ sufficiently large, $\mathbf{g}_i\in\mathcal{ACH}^{k,\alpha}_n(\Sigma)$ and $\mathrm{tr}_\infty^1[\mathbf{g}_i]=\tilde{\varphi}_i$. Thus, by  \cite[Proposition 3.3]{BernsteinWangBanach}, $\Upsilon_i\in\mathcal{ACH}^{k,\alpha}_n$ and $\mathcal{C}(\Upsilon_i)=\mathcal{C}_i$.

Now pick a transverse section $\mathbf{v}$ on $\Sigma$ that, outside a compact set, equals $\mathscr{E}_{\mathbf{w}}[\mathbf{w}]=\mathbf{w}\circ\pi_{\mathbf{w}}|_\Sigma$ for $\mathbf{w}$ a chosen homogeneous transverse section on $\mathcal{C}$. For $i$ sufficiently large, $\mathbf{v}_i=\mathbf{v}\circ\mathbf{g}_i^{-1}$ is an asymptotically homogeneous, transverse section on $\Upsilon_i$. By Proposition \ref{AsympRegProp}, for $i$ large, $\Sigma_i$ lies in a $\mathbf{v}_i$-regular neighborhood of $\Upsilon_i$ and is transverse to $\mathscr{E}_{\mathbf{v}_i}[\mathbf{v}_i]=\mathbf{v}_i\circ\pi_{\mathbf{v}_i}$. In particular, $\pi_{\mathbf{v}_i}|_{\Sigma_i}\colon\Sigma_i\to\Upsilon_i$ is an element of $\mathcal{ACH}^{k,\alpha}_n(\Sigma_i)$. Thus, for $i$ large, there is a unique $u_i\in C^{k,\alpha}_1\cap C^{k}_{1,0}(\Sigma)$ so that $\Sigma_i$ can be parametrized by the map 
$$
\tilde{\mathbf{f}}_i=(\pi_{\mathbf{v}_i}|_{\Sigma_i})^{-1}\circ\mathbf{g}_i=\mathbf{g}_i+u_i\mathbf{v}
$$
which is an element of $\mathcal{ACH}^{k,\alpha}_n(\Sigma)$ by \cite[Proposition 3.3]{BernsteinWangBanach}. Moreover, $\Vert u_i \Vert_{k,\alpha}^{(1)}$ is uniformly bounded and $\Vert u_i \Vert_1^{(1)}\to 0$.

Observe, that there is a $\delta>0$ (independent of $i$) so that for $i$ sufficiently large
$$
\inf_{p\in\Sigma} |\mathbf{v}\cdot(\mathbf{n}_{\Sigma_i}\circ\tilde{\mathbf{f}}_i)|>\delta.
$$
As $\Sigma_i$ is a self-expander, it follows from \cite[Lemma 7.2]{BernsteinWangBanach} and direct calculations that
$$
\mathscr{L}_\Sigma u_i=-\frac{\mathbf{n}_{\Sigma_i}\circ\tilde{\mathbf{f}}_i}{\mathbf{v}\cdot (\mathbf{n}_{\Sigma_i}\circ\tilde{\mathbf{f}}_i)} \cdot\left(2\nabla_\Sigma u_i\cdot\nabla_\Sigma\mathbf{v}+u_i\left(\mathscr{L}_\Sigma+\frac{1}{2}\right)\mathbf{v}+(g_{\tilde{\mathbf{f}}_i}^{-1}-g_\Sigma^{-1})^{jl}(\nabla_\Sigma^2\tilde{\mathbf{f}}_i)_{jl}\right)
$$
where $g_{\tilde{\mathbf{f}}_i}$ and $g_\Sigma$ are the pull-back metrics of Euclidean one by $\tilde{\mathbf{f}}_i$ and $\mathbf{x}|_\Sigma$, respectively. One further uses \cite[Proposition 3.1]{BernsteinWangBanach} to see that, for $i$ large, the right hand side are elements of $C^{k-2,\alpha}_{-1}(\Sigma)$ with uniformly bounded $\Vert\cdot\Vert_{k-2,\alpha}^{(-1)}$ norm. Hence, by \cite[Theorem 5.7 and Corollary 5.8]{BernsteinWangBanach}, $u_i\in\mathcal{D}^{k,\alpha}(\Sigma)$ and $\Vert u_i\Vert^*_{k,\alpha}$ is uniformly bounded. Here 
$$
\mathcal{D}^{k,\alpha}(\Sigma)=\set{u\in C^{k,\alpha}_1\cap C^{k-1,\alpha}_0\cap C^{k-2,\alpha}_{-1}(\Sigma)\colon \mathbf{x}\cdot\nabla_\Sigma u\in C^{k-2,\alpha}_{-1}(\Sigma)}
$$
is a Banach space with norm
$$
\Vert u\Vert_{k,\alpha}^*=\Vert u\Vert_{k-2,\alpha}^{(-1)}+\sum_{k-1\leq i\leq k}\Vert\nabla^i_\Sigma u\Vert_{\alpha}^{(1-k)}+\Vert\mathbf{x}\cdot\nabla_\Sigma u\Vert_{k-2,\alpha}^{(-1)}.
$$
As $\mathcal{D}^{k,\alpha}(\Sigma)$ is compactly embedded in $C^{k-1,\alpha}_1(\Sigma)$, we have $\Vert u_i \Vert_{k-1,\alpha}^{(1)}\to 0$. Thus it follows from \cite[Lemma 7.5]{BernsteinWangBanach} that $\Vert u_i \Vert_{k,\alpha}^*\to 0$ and so $\Vert\tilde{\mathbf{f}}_i-\mathbf{x}|_\Sigma\Vert_{k,\alpha}^{(1)}\to 0$.

We pick a large integer $I$ so that $\tilde{\mathbf{f}}_I$ is well-defined as above. Choose a representative $\mathbf{f}_I$ of $[\mathbf{f}_I]$. We define $\mathbf{f}=\tilde{\mathbf{f}}_I^{-1}\circ\mathbf{f}_I$, and it is clear that $\mathbf{f}(\Gamma)=\Sigma$. Moreover, by  \cite[Proposition 3.3]{BernsteinWangBanach}, $\mathbf{f}\in\mathcal{ACH}^{k,\alpha}(\Gamma)$ and 
$$
\mathcal{C}[\mathbf{f}]=\mathcal{C}[\tilde{\mathbf{f}}_I]^{-1}\circ\mathcal{C}[\mathbf{f}_I]=(\mathscr{E}^{\mathrm{H}}_1[\varphi_I]\circ\mathscr{E}^{\mathrm{H}}_1[\varphi]^{-1})^{-1}\circ\mathscr{E}^{\mathrm{H}}_1[\varphi_I]=\mathscr{E}^{\mathrm{H}}_1[\varphi].
$$
Thus, $[\mathbf{f}]$ represents a class in $\mathcal{ACE}^{k,\alpha}_n(\Gamma)$ which has $\Pi([\mathbf{f}])=\mathrm{tr}^1_\infty[\mathbf{f}]=\varphi$.

Hence, to complete the proof it remains only to show that $[\mathbf{f}_i]\to[\mathbf{f}]$ in the topology of $\mathcal{ACE}^{k,\alpha}_n(\Gamma)$. Observe that $\tilde{\mathbf{f}}_i\circ\mathbf{f}(\Gamma)=\Sigma_i$, and invoking  \cite[Proposition 3.3]{BernsteinWangBanach} again, $\tilde{\mathbf{f}}_i\circ\mathbf{f}\in\mathcal{ACH}^{k,\alpha}_n(\Gamma)$ and
$$
\mathcal{C}[\tilde{\mathbf{f}}_i\circ\mathbf{f}]=\mathcal{C}[\tilde{\mathbf{f}}_i]\circ\mathcal{C}[\mathbf{f}]=(\mathscr{E}^{\mathrm{H}}_1[\varphi_i]\circ\mathscr{E}^{\mathrm{H}}_1[\varphi]^{-1})\circ\mathscr{E}^{\mathrm{H}}_1[\varphi]=\mathscr{E}^{\mathrm{H}}_1[\varphi_i].
$$
This gives that $\tilde{\mathbf{f}}_i\circ\mathbf{f}$ is an element of $[\mathbf{f}_i]$. Moreover, by  \cite[Proposition 3.1]{BernsteinWangBanach}, $\tilde{\mathbf{f}}_i\circ\mathbf{f}\to\mathbf{f}$ in $C^{k,\alpha}_1(\Gamma;\mathbb{R}^{n+1})$. Therefore, $[\mathbf{f}_i]\to[\mathbf{f}]$ in the topology of $\mathcal{ACE}^{k,\alpha}_n(\Gamma)$.
\end{proof}

The proofs of properness of $\Pi$ now follow easily.

\begin{proof}[Proof of Theorem \ref{2DProperThm}]
The result follows directly from Item (1) of Theorem \ref{SmoothCpctThm}, Proposition \ref{ParametrizeProp} and an elementary topology fact, Lemma \ref{TopCompactLem}.
\end{proof}

\begin{proof}[Proof of Theorem \ref{EntropyProperThm}]
First, by Lemma \ref{ContEntropyConeLem}, $\mathcal{V}_{\mathrm{ent}}(\Gamma,\Lambda)$ is open in $C^{k,\alpha}(\mathcal{L}(\Gamma);\mathbb{R}^{n+1})$. Next, by Lemma \ref{ExpanderConeEntropyLem}, 
$$
\lambda[\mathbf{f}(\Gamma)]=\lambda[\mathcal{C}[\sigma]] \mbox{ where $\sigma=\Pi([\mathbf{f}])(\mathcal{L}(\Gamma))$}.
$$
Thus, the continuity of $\Pi$ and Lemma \ref{ContEntropyConeLem} imply that $\mathcal{U}_{\mathrm{ent}}(\Gamma,\Lambda)$ is open in $\mathcal{ACE}^{k,\alpha}_n(\Gamma)$. If $\mathcal{Z}\subset \mathcal{V}(\Gamma,\Lambda)$ is compact, then it follows from Lemma \ref{ContEntropyConeLem} that there is a $\Lambda_0<\Lambda$ so that
$$
\lambda[\mathscr{E}^{\mathrm{H}}_1[\varphi](\mathcal{C}(\Gamma))]\leq \Lambda_0
$$
for all $\varphi \in\mathcal{Z}$. Hence, if \eqref{Assump1} holds for some $\Lambda<2$, then the last claim follows from Item \eqref{EntropyCpctItem} of Theorem \ref{SmoothCpctThm}, Proposition \ref{ParametrizeProp} and Lemma \ref{TopCompactLem}.
\end{proof}

\begin{proof}[Proof of Theorem \ref{MCProperThm}]
Items \eqref{OpenMCExpanderItem} and \eqref{OpenMCConeItem} are straightforward. For $[\mathbf{f}]\in \mathcal{U}_{\mathrm{mc}}(\Gamma)$ let $\Sigma=\mathbf{f}(\Gamma)$. As $H_{\Sigma}>0$ and $L_{\Sigma} H_\Sigma=-H_{\Sigma}<0$, $L_{\Sigma}$ has a positive super-solution. Hence, it follows from the trick of Fischer-Colbrie and Schoen \cite{FCS} that $\Sigma$ is strictly stable. In particular, $\Sigma$ admits no non-trivial Jacobi fields and so Item \eqref{MCOpenItem} follows from \cite[Theorem 1.1 (4)]{BernsteinWangBanach}. Furthermore, if $\mathcal{Z}\subset \mathcal{V}_{\mathrm{mc}}(\Gamma)$ is compact, then there is an $h_0>0$ so $H_\sigma\geq h_0$, where $\sigma$ is the link of the cone $\mathscr{E}^\mathrm{H}_1[\varphi](\mathcal{C}(\Gamma))$, for all $\varphi\in\mathcal{Z}$. As such, Item \eqref{MCProperItem} follows from Item \eqref{MCCpctItem} of Theorem \ref{SmoothCpctThm}, Proposition \ref{ParametrizeProp} and Lemma \ref{TopCompactLem}. It remains only to show the final remark. Observe, that by Item \eqref{MCProperItem} and that $\mathcal{V}_\mathrm{mc}(\Gamma)$ is a compactly generated Hausdorff space, the map $\Pi|_{\mathcal{U}_\mathrm{mc}(\Gamma)}$ is a closed map. Hence, following the arguments in \cite[Proposition 4.46]{Lee}, the final remark is an immediate consequence of Items \eqref{MCOpenItem} and \eqref{MCProperItem}.
\end{proof}

\section{Existence of mean convex asymptotically conical self-expanders} \label{ApplicationSec}
We conclude by proving Corollary \ref{ApplicationCor}.  We first show existence and uniqueness of mean convex self-expanders asymptotic to rotationally symmetric cones.  This result is not new (see  \cite{AIC, Ding}), but we include a proof for the sake of completeness.
\begin{prop} \label{ApplicationProp}
For $n\geq 2$ let $\mathcal{C}\subset\mathbb{R}^{n+1}$ be a connected non-flat rotationally symmetric cone. There is a unique smooth self-expander $\Sigma$ that is smoothly asymptotic to $\mathcal{C}$. Moreover, $\Sigma$ satisfies $H_\Sigma>0$ and is an entire graph and so is diffeomorphic to $\mathbb{R}^n$ .
\end{prop}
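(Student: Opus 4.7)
The plan is to exploit the rotational symmetry of $\mathcal{C}$ to reduce the existence and uniqueness problems to the analysis of a single second-order ODE. Rotate coordinates so the axis of $\mathcal{C}$ is the $e_{n+1}$-axis. Since $\mathcal{C}$ is connected and non-flat, its link $\sigma = \mathcal{L}[\mathcal{C}]$ is a single non-equatorial round $(n-1)$-sphere in $\mathbb{S}^n$, and after possibly reflecting through $\{x_{n+1}=0\}$ one may assume $\mathcal{C}$ is the graph of $u_0(r) = br$ over $\mathbb{R}^n$ for some $b>0$; in particular $\sigma$ is strictly mean convex with $H_\sigma \geq h_0$ for some $h_0>0$.

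For existence, seek a rotationally symmetric self-expander of the form $\Sigma = \{(y,u(|y|)) : y \in \mathbb{R}^n\}$ with $u \in C^\infty([0,\infty))$, $u'(0)=0$ (smoothness at the axis), and $u(r)-br \to 0$ as $r \to \infty$. Substitution into \eqref{ExpanderEqn} reduces the problem to a second-order ODE for $u$, and a shooting argument in the initial height $\alpha = u(0)$ yields a unique $\alpha^\ast>0$ giving a global solution with the prescribed asymptotic. Alternatively, the variational construction of Ilmanen as carried out by Ding in \cite{Ding} yields an element of $\mathcal{E}^{k,\alpha}_{n,\mathrm{mc}}(h_0)$ asymptotic to $\mathcal{C}$, and rotational symmetry of the energy minimizer is inherited from that of the boundary data. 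The strict mean convexity $H_\Sigma>0$ then follows from $L_\Sigma H_\Sigma = -H_\Sigma$, positivity of $H_\Sigma$ near infinity (via $H_\mathcal{C}>0$ and Proposition \ref{AsympRegProp}), and the strong maximum principle, exactly as at the end of the proof of Item \eqref{MCCpctItem} of Theorem \ref{SmoothCpctThm}. That $\Sigma$ is an entire graph, and hence diffeomorphic to $\mathbb{R}^n$, is built into the construction.

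The heart of the matter is uniqueness. Let $\Sigma'$ be any smooth self-expander asymptotic to $\mathcal{C}$; the plan is to show $\Sigma'$ is rotationally symmetric about the axis of $\mathcal{C}$ via Alexandrov moving planes, and then to conclude $\Sigma' = \Sigma$ by ODE uniqueness. Fix any hyperplane $P$ containing the axis with unit normal $\nu$, and for $t \in \mathbb{R}$ let $\Sigma^\ast_t$ denote the reflection of $\Sigma' \cap \{\mathbf{x}\cdot\nu > t\}$ across $\{\mathbf{x}\cdot\nu = t\}$. Proposition \ref{AsympRegProp}, applied both to $\Sigma'$ and to its reflection, gives uniform graphical control in the asymptotic region which, for $t$ sufficiently large, forces $\Sigma^\ast_t$ to lie strictly on the $\{\mathbf{x}\cdot\nu < t\}$ side of $\Sigma'$. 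Slide $t$ down toward $0$: a first contact at some $t^\ast > 0$ would, via the strong maximum principle for the quasilinear self-expander equation, force $\Sigma^\ast_{t^\ast}$ to coincide with $\Sigma' \cap \{\mathbf{x}\cdot\nu < t^\ast\}$, contradicting asymptotic conicality. Hence $t^\ast=0$, giving $P$-symmetry of $\Sigma'$. Ranging $P$ over all hyperplanes through the axis yields rotational symmetry of $\Sigma'$, whence $\Sigma'$ is the graph of a function solving the same ODE with the same boundary conditions as $u$, and $\Sigma'=\Sigma$ follows.

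The principal obstacle is executing the moving planes argument in the non-compact, asymptotically conical setting: the decay estimates of Proposition \ref{AsympRegProp} are essential both to initiate the sliding at large $|t|$ and to prevent the first contact from escaping to infinity as $t$ shrinks. When the first contact lies on the axis of rotation the reflected surface meets $\Sigma'$ tangentially there, and one must invoke the boundary-point form of the strong maximum principle (after expressing both surfaces as graphs over a common tangent plane) to close the argument.
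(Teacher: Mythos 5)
Your overall architecture differs from the paper's and, as written, has two genuine gaps. The more serious one is the moving-planes step. The self-expander equation \eqref{ExpanderEqn} is \emph{not} invariant under reflection across the affine hyperplane $\{\mathbf{x}\cdot\nu=t\}$ for $t\neq 0$, because the equation involves the position vector explicitly; the reflected surface satisfies $\mathbf{H}-\tfrac12(\mathbf{x}^{*})^{\perp}=-t\,\nu^{\perp}$ rather than \eqref{ExpanderEqn}. Consequently your statement that a first contact at $t^{*}>0$ would ``force $\Sigma^{*}_{t^{*}}$ to coincide with $\Sigma'$'' via the strong maximum principle does not apply: the two surfaces solve different equations. (The error term does have a sign, which could in principle be turned to your advantage, but you neither compute it nor use it.) Moreover, the reflection of $\mathcal{C}\cap\{\mathbf{x}\cdot\nu>t\}$ across $\{\mathbf{x}\cdot\nu=t\}$ is a translate of $\mathcal{C}$ and so stays at \emph{bounded} distance from $\mathcal{C}$ near infinity for every $t>0$; the two ends do not separate, so ruling out first contact at infinity is a real analytic issue that you flag but do not resolve. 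The second gap is in existence: the shooting argument is asserted rather than carried out, and the fallback via Ding's variational construction does not deliver the conclusion you need --- the paper explicitly remarks that the variational method says nothing about the topology of the resulting expander, and symmetry of minimizers is not automatic from symmetry of the data absent a uniqueness statement, so neither ``entire graph'' nor rotational symmetry comes for free from that route.

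For comparison, the paper avoids both difficulties. Existence: approximate $\mathcal{C}$ (a Lipschitz graph $u_0(x)=\tau|x|$) by smooth convex graphs, run Ecker--Huisken graphical mean curvature flow, pass to a Brakke limit flow starting from $\mathcal{C}$, and use uniqueness of graphical solutions with linear growth to force self-similarity $u(t,x)=\sqrt{t}\,u(1,|x|/\sqrt{t})$; this produces $\Sigma=\Sigma_1$ as an entire graph with $H_\Sigma\geq 0$, upgraded to $H_\Sigma>0$ by $L_\Sigma H_\Sigma=-H_\Sigma$ and the strong maximum principle. Uniqueness: rather than proving a competitor $\Sigma'$ is rotationally symmetric, slide the constructed graph vertically. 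On $\Sigma+s\mathbf{e}_{n+1}$ one has $H+\tfrac12\mathbf{x}\cdot\mathbf{n}=\tfrac{s}{2}\,\mathbf{e}_{n+1}\cdot\mathbf{n}>0$ for $s>0$ (here graphicality guarantees $\mathbf{e}_{n+1}\cdot\mathbf{n}>0$), so the translate is a strict supersolution and an interior tangency with $\Sigma'$ is impossible; Item \eqref{LinearDecayItem} of Proposition \ref{AsympRegProp} controls the ends and forces the infimal $s$ to be $0$. Repeating from below gives $\Sigma'=\Sigma$ among \emph{all} smooth self-expanders asymptotic to $\mathcal{C}$, with no reflection argument and no ODE analysis. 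If you wish to salvage your approach, you would need to carry out the sign computation for the reflected equation and supply a barrier at infinity; but the translation-sliding argument is both shorter and avoids these issues entirely.
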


\begin{proof}
Without loss of generality we assume that $\mathbf{e}_{n+1}$ is the axis of symmetry of $\mathcal{C}$ and $\mathcal{C}$ lies in the half-space $\set{x_{n+1}\geq 0}$. First, we show the existence of a strictly mean convex self-expanders asymptotic to $\mathcal{C}$. Clearly, there is a $\tau>0$ so $\mathcal{C}$ is the graph of a Lipschitz function $u_0\colon\Real^n\to \Real$ given by $u_0(x)=\tau|x|$. Let $u_0^{i}(x)=\tau \sqrt{i^{-1} +|x|^2}$ so $u_0^i\colon \Real^n\to \Real$ is a sequence of smooth functions which are strictly convex, have Lipschitz constant at most $\tau$ and are asymptotic to $u_0$. Moreover, the $u_0^i$ converge to $u_0$ uniformly. Let $\Sigma^i\subset\mathbb{R}^{n+1}$ be the graphs of the $u_0^i$. These are smooth convex hypersurfaces that are asymptotic to $\mathcal{C}$. Thus, Ecker-Huisken \cite{EHAnn} and the strong maximum principle applied to the evolution equation for mean curvature \cite{EHInvent} imply that there is a unique strictly mean convex, graphical solution, $\mathcal{M}^i=\set{\Sigma^i_t}_{t\geq 0}$ to mean curvature flow starting from $\Sigma^i$. Moreover, if we denote by $u^i(t,\cdot)$ the functions whose graphs are the $\Sigma_t^i$, then the $u^i(t, \cdot)$ have uniformly bounded Lipschitz constant. 

Hence, by Brakke's compactness (cf. \cite[\S 7]{IlmanenElliptic}), the $\mathcal{M}^i$ converges as Brakke flows to $\mathcal{M}=\set{\Sigma_t}_{t\geq 0}$ with $\Sigma_0=\mathcal{C}$. Furthermore, the interior estimates for graphical mean curvature flow \cite{EHInvent} and the Arzel\`{a}-Ascoli theorem imply that for $t>0$, $\Sigma_t$ has nonnegative mean curvature and is given by the graph of a smooth function $u(t,\cdot)$ on $\mathbb{R}^n$ with a uniform Lipschitz constant. Moreover, as $t\to 0^+$, $\Sigma_t\to\mathcal{C}$ in $C_{loc}^\infty(\mathbb{R}^{n+1}\setminus\set{\mathbf{0}})$. By the parabolic maximum principle, such a solution $u(t,\cdot)$ for $t>0$ is unique in the class of functions with at most linear growth. A consequence of this is that $u(t,x)=\sqrt{t}\, u(1,\frac{|x|}{\sqrt{t}})$. As such, $\Sigma=\Sigma_1$ is a graphical self-expanding hypersurface of revolution that is smoothly asymptotic to $\mathcal{C}$ and has nonnegative mean curvature. Furthermore, as $L_\Sigma H_\Sigma=-H_\Sigma\leq 0$ and $H_\mathcal{C}>0$, the strong maximum principle implies $H_\Sigma>0$.

Next we show the uniqueness of self-expanders asymptotic to the given cone $\mathcal{C}$. Suppose $\Sigma^\prime$ is another smooth self-expander smoothly asymptotic to $\mathcal{C}$. Let 
$$
s_0=\inf\set{s>0 \colon \Sigma^\prime\cap (\Sigma+s\mathbf{e}_{n+1})=\emptyset} \geq 0.
$$
We claim $s_0=0$ and so $\Sigma^\prime$ lies below $\Sigma$. Suppose not, i.e., $s_0>0$. By Item \eqref{LinearDecayItem} of Proposition \ref{AsympRegProp}, $\Sigma+s_0\mathbf{e}_{n+1}$ touches $\Sigma^\prime$ from above at some interior point. However, we observe that $H+\frac{1}{2}\mathbf{x}\cdot \mathbf{n}>0$ on $\Sigma+s_0\mathbf{e}_{n+1}$ where $\mathbf{n}$ is chosen to be upward. This leads to a contradiction with the strong maximum principle. By similar arguments, $\Sigma^\prime$ also lies above $\Sigma$. Thus $\Sigma^\prime=\Sigma$ proving the uniqueness.
\end{proof}

\begin{proof}[Proof of Corollary \ref{ApplicationCor}]
Let $\set{\sigma_t}_{t\in [0, T)}$ be the solution of mean curvature flow in $\mathbb{S}^{n}$ with $\sigma_0=\sigma$ and $T$ the maximal time of existence.  Pick a parameterization $\varphi_0\colon \sigma\to \sigma_{0}$ and choose $\varphi_t\colon \sigma\to \sigma_t$ a corresponding evolution of parameterizations. As $\sigma_0$ is mean convex, the maximum principle implies that $\sigma_t$ is as well for all $t\in (0,T)$. When $n=2$, results of Grayson \cite{Grayson} and Zhu \cite[Corollary 4.2]{Zhu} give that $\sigma_t$ smoothly shrinks to a round point in finite time. When $n\geq 3$, by \cite{HuiskenMCFSphere}, the condition \eqref{PinchingEqn} is preserved, which, together with the mean convexity and connectedness, implies that $T<\infty$ and the flow disappears in a round point at time $T$. 

Choose a rotationally symmetric hypersurface $\tilde{\sigma}\subset\mathbb{S}^n$ and a $T_0$ sufficiently close to $T$, so $\sigma_{T_0}$ is a small normal graph over $\tilde{\sigma}$. Thus, there is a path of smooth, strictly mean convex embeddings, $\psi_t\colon\tilde{\sigma}\to\mathbb{S}^n$,  $t\in [0,1]$, so $\psi_0(\tilde{\sigma})=\sigma_{T_0}$ and $\psi_1=\mathbf{x}|_{\tilde{\sigma}}$. Thus, setting
$$
\tilde{\varphi}_t=\left\{
\begin{array}{cc}
\varphi_t\circ\varphi_{T_0}^{-1}\circ\psi_0 & 0\leq t\leq T_0 \\
\psi_{t-T_0} & T_0 \leq t \leq T_0+1
\end{array}
\right.
$$
one obtains a path of strictly mean convex $C^{k,\alpha}$-embeddings of $\tilde{\sigma}$ into $\mathbb{S}^n$ connecting $\tilde{\varphi}_{T_0+1}=\mathbf{x}|_{\tilde{\sigma}}$ to $\tilde{\varphi}_0\colon\tilde{\sigma}\to\sigma$. By Proposition \ref{ApplicationProp} there is a unique smooth self-expander $\Gamma$ that is smoothly asymptotic to $\mathcal{C}[\tilde{\sigma}]$ and $H_\Gamma>0$. Hence, Theorem \ref{MCProperThm} implies there is an $[\mathbf{f}_0]\in\mathcal{U}_\mathrm{mc}(\Gamma)$ with $\Pi([\mathbf{f}_0])=\tilde{\varphi}_0$ and so $\Sigma=\mathbf{f}_0(\Gamma)$ is the desired element.

In what follows we restrict our discussions to those $n$ such that $\pi_0(\mathrm{Diff}^+(\mathbb{S}^{n-1}))=0$. For $n\geq 3$ we let
$$
\mathcal{V}=\set{\varphi\in\mathcal{V}_{\mathrm{mc}}(\Gamma)\colon \mbox{$\mathcal{L}[\mathscr{E}^\mathrm{H}_1[\varphi](\mathcal{C}(\Gamma))]$ satisfies \eqref{PinchingEqn}}},
$$ 
which is an open subset of $\mathcal{V}_{\mathrm{mc}}(\Gamma)$; for $n=2$ let $\mathcal{V}=\mathcal{V}_\mathrm{mc}(\Gamma)$. By the previous discussions and the hypothesis that $\mathrm{Diff}^+(\mathcal{L}(\Gamma))\simeq\mathrm{Diff}^+(\mathbb{S}^{n-1})$ is path-connected, we observe $\mathcal{V}$ has exactly two components. Let $\mathcal{V}_+$ be the component of $\mathcal{V}$ that contains $\mathbf{x}|_{\mathcal{L}(\Gamma)}$ and let $\mathcal{V}_-$ be the component containing $\mathbf{x}|_{\mathcal{L}(\Gamma)}\circ I$ where $I\in\mathrm{Diff}(\mathcal{L}(\Gamma))$ is an orientation-reversing involution. If $\mathcal{U}_\pm=\mathcal{U}_\mathrm{mc}(\Gamma)\cap \Pi^{-1}(\mathcal{V}_\pm)$,  then it follows from Theorem \ref{MCProperThm} and the uniqueness of $\Gamma$ that $\Pi|_{\mathcal{U}_\pm}\colon\mathcal{U}_\pm\to\mathcal{V}_\pm$ is a diffeomorphism.

Now suppose $\varphi,\psi\in\mathcal{V}_+$ so that $\mathscr{E}^\mathrm{H}_1[\varphi](\mathcal{C}(\Gamma))=\mathscr{E}^\mathrm{H}_1[\psi](\mathcal{C}(\Gamma))$. If $[\mathbf{f}], [\mathbf{g}]\in\mathcal{U}_+$ with $\Pi([\mathbf{f}])=\varphi$ and $\Pi([\mathbf{g}])=\psi$, then we will show $\mathbf{f}(\Gamma)=\mathbf{g}(\Gamma)$. Observe that there is a path of homogeneous $\Phi_t\in\mathrm{Diff}^+(\mathcal{C}(\Gamma))$ with $\Phi_0=\mathscr{E}^\mathrm{H}_1[\varphi]^{-1}\circ\mathscr{E}^\mathrm{H}_1[\psi]$ and $\Phi_1=\mathbf{x}|_{\mathcal{C}(\Gamma)}$. Thus, Theorem \ref{MCProperThm} and the uniqueness of $\Gamma$ imply that there is a path of $[\mathbf{h}_t]\in\mathcal{ACE}^{k,\alpha}_n(\Gamma)$ so that $\mathbf{h}_t(\Gamma)=\Gamma$ and $\Pi([\mathbf{h}_t])=\mathrm{tr}^1_\infty[\Phi_t]$. In particular, $[\mathbf{f}\circ\mathbf{h}_0]\in\mathcal{U}_+$ and $\Pi([\mathbf{f}\circ\mathbf{h}_0])=\psi$. As $\Pi|_{\mathcal{U}_+}$ is a diffeomorphism, $[\mathbf{g}]=[\mathbf{f}\circ\mathbf{h}_0]$ and so $\mathbf{g}(\Gamma)=\mathbf{f}(\Gamma)$. By symmetries, the same claim holds on $\mathcal{V}_-$. Finally, observe that as $\Gamma$ is diffeomorphic to $\mathbb{R}^n$ there is an orientation-reversing diffeomorphism $\tilde{I}\in\mathcal{ACH}^{k,\alpha}_n(\Gamma)$ so that $\mathrm{tr}_\infty^1[\tilde{I}]=I$. Moreover, $[\mathbf{f}]\in\mathcal{U}_+$ if and only if $[\mathbf{f}\circ\tilde{I}]\in\mathcal{U}_-$. This concludes the proof of uniqueness.
 
\end{proof}

\appendix

\section{}

\begin{lem} \label{TopCompactLem}
Let $X$ be a topological space. Suppose $X$ has a countable cover, $\set{A_i}_{i\in\mathbb{N}}$, of closed subsets so that each $A_i$ is metrizable. If $K$ is a sequentially compact subspace of $X$, then it is compact. 
\end{lem}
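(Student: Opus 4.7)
The plan is to reduce to the classical fact that sequential compactness coincides with compactness in metrizable spaces, apply it to $K\cap A_i$, and then combine these local compactness statements with the sequential compactness of $K$ by way of the Lindelöf property.

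First I would set $K_i = K\cap A_i$. Since $A_i$ is closed in $X$, each $K_i$ is closed in $K$, and a sequence in $K_i$ that subconverges in $K$ has its limit in $K_i$; hence $K_i$ is sequentially compact as a subspace of $X$. On the other hand, $K_i\subset A_i$ is a subspace of the metrizable space $A_i$, hence $K_i$ is itself metrizable. In a metrizable space sequential compactness is equivalent to compactness, so each $K_i$ is compact.

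Next I would show $K$ is Lindelöf. Given any open cover $\{U_\alpha\}_{\alpha\in\mathcal{A}}$ of $K$, for each $i\in\mathbb{N}$ the compactness of $K_i$ yields a finite subfamily $\mathcal{F}_i\subset\{U_\alpha\}$ covering $K_i$. Because $K=\bigcup_{i\in\mathbb{N}} K_i$, the countable union $\bigcup_{i\in\mathbb{N}} \mathcal{F}_i$ is a countable subcover of $K$.

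Finally I would combine Lindelöf with countable compactness: since any convergent subsequence of a sequence produces an accumulation point of that sequence, sequential compactness of $K$ implies countable compactness of $K$. Given now an arbitrary open cover of $K$, extract a countable subcover by the Lindelöf property and then a finite subcover by countable compactness. This proves $K$ is compact. The only step that requires any thought is noticing that the countable closed cover delivers exactly what is needed to bootstrap ``compact on each piece'' into the Lindelöf property of the whole; everything else is standard point-set topology.
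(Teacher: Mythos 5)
Your proof is correct and follows essentially the same route as the paper: both arguments use metrizability plus sequential compactness of each $K\cap A_i$ to extract a countable subcover of $K$, and then use sequential compactness of $K$ itself to refine this to a finite subcover. The only difference is presentational—you package the two steps as the Lindel\"{o}f property and countable compactness, while the paper carries out the final reduction directly by contradiction.
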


\begin{proof}
Let $\set{U_\alpha}$ be an arbitrary collection of open sets of $X$ that covers $K$. As $K$ is sequentially compact, so is every $K\cap A_i$. Since $A_i$ is metrizable, there is a finite subcollection of $\set{U_\alpha}$ that covers $K\cap A_i$. Thus, there is a countable subcollection $\set{U_{\alpha_i}}_{i\in\mathbb{N}}\subset\set{U_\alpha}$ that covers $K$. We show that there is a finite subcollection of $\set{U_{\alpha_i}}_{i\in\mathbb{N}}$ that covers $K$, implying the compactness of $K$. We argue by contradiction. Suppose not, then pick $x_j\in K\setminus (\bigcup_{i=1}^j U_{\alpha_i})$. Thus, up to passing to a subsequence, $x_j\to x\in K$. However, $x\in U_{\alpha_{i_0}}$ for some $i_0$, so for $j$ sufficiently large $x_j\in U_{\alpha_{i_0}}$, which leads to a contradiction.
\end{proof}


\begin{thebibliography}{99}

\bibitem{AIC} S. Angenent, T. Ilmanen and D. L. Chopp, \emph{A computed example of non-uniqueness of mean curvature flow in $\Real^3$}, Commun. in Partial Differential Equations 20 (1995), no. 11-12, 1937--1958.

\bibitem{BernsteinWangIntDegree} J. Bernstein and L. Wang, \emph{An integer degree for the space of asymptotically conical self-expanders of mean curvature flow}, Preprint. Available at \url{https://arxiv.org/abs/1807.06494}.

\bibitem{BernsteinWangBanach} J. Bernstein and L. Wang, \emph{The space of asymptotically conical self-expanders of mean curvature flow}, Preprint. Available at \url{https://arxiv.org/abs/1712.04366}.

\bibitem{Cerf1} J. Cerf, Sur les diff\'{e}omorphismes de la sph\`{e}re de dimension trois, Lecture Notes in Mathematics, No. 53, Springer-Verlag, Berlin-New York, 1968.

\bibitem{Cerf2} J. Cerf, \emph{La stratification naturelle des espaces de fonctions diff\'{e}rentiables r\'{e}elles et le th\'{e}or\`{e}me de la pseudo-isotopie}, Inst. Hautes \'{E}tudes Sci. Publ. Math. No. 39 (1970), 5--173.

\bibitem{CMGenMCF} T.H. Colding and W.P. Minicozzi II, \emph{Generic mean curvature flow I; generic singularities}, Ann. of Math. (2) 175 (2012), no. 2, 755--833.

\bibitem{Ding} Q. Ding, \emph{Minimal cones and self-expanding solutions for mean curvature flows}, Preprint. Available at \url{http://arxiv.org/abs/1503.02612v2}.

\bibitem{EHAnn} K. Ecker and G. Huisken, \emph{Mean curvature evolution of entire graphs}, Ann. of Math. (2) 130 (1989), no. 3, 453--471. 

\bibitem{EHInvent} K. Ecker and G. Huisken, \emph{Interior estimates for hypersurfaces moving by mean curvature}, Invent. Math. 105 (1991), 547--569.

\bibitem{FCS} D. Fischer-Colbrie  and R. Schoen, \emph{The structure of complete stable minimal surfaces in 3-manifolds of nonnegative scalar curvature}, Comm. Pure Appl. Math. 33 (1980), 199--211.


\bibitem{GT} D. Gilbarg and N.S. Trudinger, Elliptic partial differential equations of second order, Reprint of the 1998 Edition, Classics in Mathematics, Springer-Verlag, Berlin, 2001.

\bibitem{Grayson} M.A. Grayson, \emph{Shortening embedded curves}, Ann. of Math. (2) 129 (1989), no. 1, 71--111.

\bibitem{HuiskenMCFSphere} G. Huisken, \emph{Deforming hypersurfaces of the sphere by their mean curvature}, Math. Z. 195 (1987),  205--219.

\bibitem{Huisken} G. Huisken, \emph{Asymptotic behavior for singularities of the mean curvature flow}, J. Differential Geom. 31 (1990), no. 1, 285--299.

\bibitem{IlmanenElliptic} T. Ilmanen, Elliptic regularization and partial regularity for motion by mean curvature, Mem. Amer. Math. Soc. 108 (1994), no. 520.

\bibitem{IlmanenSing} T. Ilmanen, \emph{Singularities of mean curvature flow of surfaces}, Preprint. Available at \url{https://people.math.ethz.ch/~ilmanen/papers/sing.ps}.

\bibitem{IlmanenLec} T. Ilmanen, Lectures on mean curvature flow and related equations, Unpublished Notes. Available at \url{https://people.math.ethz.ch/~ilmanen/papers/notes.ps}.

\bibitem{IlmanenNevesSchulze} T. Ilmanen, A. Neves, and F. Schulze, \emph{On short time existence for the planar network flow}, J. Differential Geom., to appear. Available at \url{https://arxiv.org/abs/1407.4756}.

\bibitem{LSU} O.A. Ladyzenskaja, V.A. Solonnikov, and N.N. Uralceva, Linear and quasi-linear equations of parabolic type, Translations of Mathematical Monographs, Vol. 23, American Mathematical Society, Providence, R.I.

\bibitem{Lee} J.M. Lee, Introduction to smooth manifolds, Second Edition, Graduate Texts in Mathematics, 218, Springer, New York, 2013.

\bibitem{Simon} L. Simon, Lectures on geometric measure theory, Proceedings of the Centre for Mathematical Analysis, Australian National University No. 3, Canberra, 1983.

\bibitem{SmaleSphere} S. Smale, \emph{Diffeomorphisms of the $2$-sphere}, Proc. Amer. Math. Soc. 10 (1959), no. 4, 621--626.

\bibitem{SmaleCobordism} S. Smale, \emph{Generalized Poincar\'{e}'s conjecture in dimensions greater than four}, Ann. of Math. (2) 74 (1961), 391--406.

\bibitem{Smale} S. Smale, \emph{An infinite dimensional version of Sard's theorem}, Amer. J. Math. 87 (1965), 861--866.

\bibitem{WhiteInventiones} B. White, \emph{Curvature estimates and compactness theorems in 3-manifolds for surfaces that are stationary for parametric elliptic functionals}, Invent. Math. 88 (1987), no. 2, 243--256.

\bibitem{WhiteEI} B. White, \emph{The space of $m$-dimensional surfaces that are stationary for a parametric elliptic functional}, Indiana Univ. Math. J., 36 (1987), no. 3, 567--602.

\bibitem{WhiteMD} B. White, \emph{New applications of mapping degrees to minimal surface theory}, J. Differential Geom. 29 (1989), no. 1, 143--162. 

\bibitem{WhiteVM} B. White, \emph{The space of minimal submanifolds for varying Riemannian metrics}, Indiana Univ. Math. J. 40 (1991), no. 1, 161--200. 

\bibitem{WhiteStratification} B. White, \emph{Stratification of minimal surfaces, mean curvature flows, and harmonic maps}, J. Reine Angew. Math. 488 (1997), 1--35.

\bibitem{WhiteReg} B. White, \emph{A local regularity theorem for mean curvature flow}, Ann. of Math. (2) 161 (2005), no. 3, 1487--1519. 

\bibitem{Zhu} X.-P. Zhu, \emph{Asymptotic behavior of anisotropic curve flows}, J. Differential Geom. 48 (1998), no. 2, 225--274.

\end{thebibliography}
\end{document}